\numberwithin{equation}{section}
\theoremstyle{plain}
\newtheorem{theorem}{Theorem}[section]
\newtheorem{corollary}[theorem]{Corollary}
\newtheorem{lemma}[theorem]{Lemma}
\newtheorem{proposition}[theorem]{Proposition}
\theoremstyle{definition}
\newtheorem{remark}[theorem]{Remark}
\newtheorem{example}[theorem]{Example}
\theoremstyle{remark}
\newcommand{\OO}{\mathcal O}
\newcommand{\A}{\mathbb{A}}
\newcommand{\R}{\mathbb{R}}
\newcommand{\G}{\mathbb{G}}
\newcommand{\Q}{\mathbb{Q}}
\newcommand{\Z}{\mathbb{Z}}
\newcommand{\N}{\mathbb{N}}
\newcommand{\C}{\mathbb{C}}
\renewcommand{\H}{\mathbb{H}}
\newcommand{\diag}{\operatorname{diag}}
\newcommand{\ord}{\operatorname{ord}}
\newcommand{\kay}{k}
\newcommand{\GSpin}{\operatorname{GSpin}}
\newcommand{\Gspin}{\operatorname{GSpin}}
\newcommand{\SL}{\operatorname{SL}}
\newcommand{\cha}{\operatorname{Char}}
\newcommand{\pmat}[4]{\begin{pmatrix}
                 #1 & #2\\
                 #3 & #4
\end{pmatrix}}
\newcommand{\smat}[4]{\left(\begin{smallmatrix}
                 #1 & #2\\
                 #3 & #4
\end{smallmatrix}\right)}
\newcommand{\kzxz}[4]{\left(\begin{smallmatrix} #1 & #2 \\ #3 & #4\end{smallmatrix}\right) }
\newcommand{\lp}{\left (}
\newcommand{\rp}{\right )}
\newcommand{\Ac}{{\mathfrak{A}}}
\newcommand{\tAc}{{\tilde{\mathfrak{A}}}}
\newcommand{\Bc}{{\mathfrak{B}}}
\newcommand{\Pc}{{\mathcal{P}}}
\newcommand{\Nc}{{\mathcal{N}}}
\newcommand{\Zb}{\mathbb{Z}}
\newcommand{\Qb}{\mathbb{Q}}
\newcommand{\SO}{{\mathrm{SO}}}
\newcommand{\GL}{{\mathrm{GL}}}
\newcommand{\af}{\mathfrak{a}}
\newcommand{\bfrak}{\mathfrak{b}}
\newcommand{\ef}{\mathfrak{e}}
\newcommand{\ff}{\mathfrak{f}}
\newcommand{\wf}{\mathfrak{w}}
\newcommand{\uf}{\mathfrak{u}}
\newcommand{\vf}{\mathfrak{v}}
\newcommand{\Nm}{{\mathrm{Nm}}}
\newcommand{\Ab}{\mathbb{A}}
\newcommand{\Fb}{\mathbb{F}}
\newcommand{\Nb}{\mathbb{N}}
\newcommand{\Cb}{\mathbb{C}}
\newcommand{\Vb}{\mathbb{V}}
\newcommand{\attr}[1]{\color{red} #1 \color{black}}
\newcommand{\taf}{\tilde{\mathfrak{a}}}
\newcommand{\tb}{\tilde{b}}
\newcommand{\tm}{\tilde{m}}
\newcommand{\tmu}{\tilde{\mu}}
\newcommand{\tn}{\tilde{n}}
\newcommand{\tw}{\tilde{w}}
\newcommand{\tx}{\tilde{x}}
\newcommand{\ty}{\tilde{y}}
\newcommand{\tr}{\operatorname{Tr}}
\newcommand{\norm}{\operatorname{N}}
\newcommand{\dd}{\mathrm{d}}
\newcommand{\zr}{\mathrm{z}}
\newcommand{\Diff}{\mathrm{Diff}}
\newcommand{\disc}{\mathrm{disc}}
\newcommand{\Oc}{\mathcal O}
\newcommand{\cf}{\mathfrak{c}}
\newcommand{\rg}{\rho_{\mathrm{g}}}
\newcommand{\fff}{\operatorname{if }}
\newcommand{\Cl}{\operatorname{Cl}}
 \def\MR#1{}
\begin{document}

\title[On a Conejcture of Yui and Zagier II]{On a Conjecture of Yui and Zagier II}

\author[Y.~ Li]{Yingkun Li}
\author[T.~Yang]{Tonghai Yang}
\author[D.~Ye]{Dongxi Ye}

\address{
Max Planck Institute for Mathematics,
    Vivatsgasse 7, 
    D--53111     Bonn,
    Germany}
\email{yingkun@mpim-bonn.mpg.de}

\address{Department of Mathematics, University of Wisconsin Madison, Van Vleck Hall, Madison, WI 53706, USA}
\email{thyang@math.wisc.edu}

\address{
School of Mathematics (Zhuhai), Sun Yat-sen University, Zhuhai 519082, Guangdong,
People's Republic of China}

\email{yedx3@mail.sysu.edu.cn}

\subjclass[2000]{11G15, 11F41, 14K22}

\thanks{The first author is supported by the LOEWE research unit USAG,
 the Deutsche Forschungsgemeinschaft (DFG) through the Collaborative Research Centre TRR 326 ``Geometry and Arithmetic of Uniformized Structures'' (project number 444845124)
 and the Heisenberg Program ``Arithmetic of real-analytic automorphic forms''(project number 539345613).
The second author is partially supported by a UW-Madison Mid-Career award. The third author is supported by the Guangdong Basic and Applied Basic Research
Foundation (Grant No. 2024A1515030222).}

\subjclass[2020]{11G18, 11G15, 11F67, 14G40. }

\date{\today}

\begin{abstract} Yui and Zagier  made some fascinating  conjectures on the factorization on the norm of  the difference of Weber class invariants $ f(\mathfrak a_1) -  f(\mathfrak a_2)$ based on their calculation in \cite{YZ}.  Here $\mathfrak a_i$ belong  two diferent ideal classes of discrimants $D_i$ in imagainary quadratic fields $\Q(\sqrt{D_i})$.  In  \cite{LY}, we proved these conjectures and their generalizations when  $(D_1, D_2) =1$ using the so-called big CM value formula of Borcherds lifting. In this sequel, we prove the conjectures when  $\Q(\sqrt{D_1}) =\Q(\sqrt{D_2})$ using the so-called small CM value formula. In addition, we  give a precise factorization formula for the resultant of two different Weber class invariant polynomials for distinct orders.
\end{abstract}

\maketitle
 \makeatletter
 \providecommand\@dotsep{5}
 \def\listtodoname{List of Todos}
 \def\listoftodos{\@starttoc{tdo}\listtodoname}
 \makeatother

\allowdisplaybreaks

\section{Introduction}

This is a sequel to \cite{LY}. Recall the three classical Weber functions of level $48$:
\begin{align}\label{eq:Weber}
\mathfrak f(\tau) &:=\zeta_{48}^{-1} \frac{\eta(\frac{\tau+1}2)}{\eta(\tau)}=q^{-\frac{1}{48}} \prod_{n=1}^\infty (1+q^{n-\frac{1}2}), \notag
\\
\mathfrak f_1(\tau) &:=\frac{\eta(\frac{\tau}2)}{\eta(\tau)} = q^{-\frac{1}{48}}\prod_{n=1}^\infty (1-q^{n-\frac{1}2}),
\\
\mathfrak f_2(\tau) &:= \sqrt 2 \frac{\eta(2\tau)}{\eta(\tau)}  = \sqrt 2  q^{\frac{1}{24}} \prod_{n=1}^\infty (1+q^n). \notag
\end{align}
where $\zeta_m :=e^{\frac{2 \pi i}{m}}$ for any $m \in \Nb$. 
For each  $\dd|24$, the vector-valued function
\begin{equation}
  \label{eq:Fd}
  F_\dd(\tau) :=
  \sqrt{2}^{24/\dd}
\begin{pmatrix}
  \ff_2^{-24/\dd}(\tau) \\ \ff_1^{-24/\dd}(\tau) \\ \ff^{-24/\dd}(\tau)
\end{pmatrix}
\in M^!(\varrho_\dd)
\end{equation}
is a weakly holomorphic modular form of $\SL_2(\Z)$ with representation  $\varrho_\dd$

\begin{equation}
  \label{eq:varrhod}
  \varrho_{\dd}(T) :=
  \begin{pmatrix}
    \zeta_{\dd}^{-1} & 0 & 0 \\
0 & 0 & \zeta_{2\dd}\\
0 & \zeta_{2\dd}  & 0
  \end{pmatrix},~
  \varrho_{\dd}(S) :=
  \begin{pmatrix}
0 & 1 & 0 \\
1 & 0 & 0 \\
0 & 0 & 1
  \end{pmatrix}.
\end{equation}

The starting point of this project was the following inspiring result of Yui and Zagier.

\begin{proposition}[\cite{YZ} Proposition]
\label{prop:YZ}
Let $D<0$ be a discriminant satisfying
\begin{equation} \label{eqd}
D \equiv 1 \bmod 8,  \hbox{  and }  3\nmid D.
\end{equation}
Denote $\varepsilon_D := (-1)^{(D-1)/8}$.
For each ideal $\mathfrak a=[a, \frac{b+\sqrt D}2]$ of the order $\mathcal O_D := \Zb[\tfrac{D+ \sqrt{D}}{2} ]$ with $a>0$, let $z_{\mathfrak a} =\frac{-b+\sqrt{D}}{2a}$  
be the associated CM point and
\begin{equation}
  \label{eq:classinv}
f(\mathfrak a) =\begin{cases}
 \zeta_{48}^{b(a-c-ac^2)}  \mathfrak f(z_{\mathfrak a} ), &\fff 2|(a, c),
 \\
  \varepsilon_D \zeta_{48}^{b(a-c-ac^2) } \mathfrak f_1(z_{\mathfrak a} ), &\fff 2|a, 2\nmid c,
  \\
  \varepsilon_D \zeta_{48}^{b(a-c+a^2c) } \mathfrak f_2(z_{\mathfrak a} ), &\fff 2\nmid a, 2|c.
  \end{cases}
\end{equation}
Then $f(\mathfrak a)$ is an algebraic integer depending only on the class of $\mathfrak a$ in the class group $\mathrm{Cl}(D)$ of $\mathcal{O}_D$, i.e.\ it is a class invariant. Moreover, $H_D := \kay_D(f(\mathfrak a)) = \kay_D(j(z_\mathfrak{a}))$ is the ring class field of $\kay_D$ corresponding $\mathcal{O}_D$.
\end{proposition}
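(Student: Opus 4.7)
The plan is to prove the three assertions --- integrality, class invariance, and the identification $\kay_D(f(\mathfrak a)) = \kay_D(j(z_\mathfrak a))$ --- in turn, by combining the classical theory of the Weber functions with Shimura reciprocity. The hypotheses $D \equiv 1 \pmod 8$ and $3 \nmid D$ enter as follows. Since $\mathcal{O}_D$ has odd conductor and $D = b^2 - 4ac$, the integer $b$ must be odd; so $D \equiv 1 \pmod 8$ forces $ac$ to be even, whence at least one of $a, c$ is even and the three branches of \eqref{eq:classinv} cover all primitive representations $\mathfrak a = [a, \tfrac{b+\sqrt D}{2}]$. The assumption $3 \nmid D$ ensures that $z_\mathfrak a$ is not $\SL_2(\Z)$-equivalent to $\zeta_3$, so that the three values $\mathfrak{f}_i^{24}(z_\mathfrak a)$ are nonzero and distinct.

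Integrality would follow from the classical identity
\[
 (X - 16)^3 - j(\tau)\, X = (X - \mathfrak{f}^{24}(\tau))(X - \mathfrak{f}_1^{24}(\tau))(X - \mathfrak{f}_2^{24}(\tau)),
\]
which, combined with the integrality of $j(z_\mathfrak a)$ from classical CM theory, shows that each $\mathfrak{f}_i(z_\mathfrak a)^{24}$ is an algebraic integer; the $24$-th root entering $f(\mathfrak a)$ remains integral thanks to the $\sqrt 2$ and cyclotomic normalisations.

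Class invariance is the heart of the argument. Two reduced bases of $\mathfrak a$ differ by $\SL_2(\Z)$, so it suffices to check invariance of $f(\mathfrak a)$ under the generators $T : (a, b, c) \mapsto (a, b + 2a, a + b + c)$ and $S : (a, b, c) \mapsto (c, -b, a)$. A parity analysis on $(a, c)$ shows that $T$ permutes the three branches as $(1) \leftrightarrow (2)$ and $(3) \to (3)$, while $S$ permutes them as $(1) \to (1)$ and $(2) \leftrightarrow (3)$; these match the transformation laws $\mathfrak{f}(\tau+1) = \zeta_{48}^{-1} \mathfrak{f}_1(\tau)$, $\mathfrak{f}_1(\tau+1) = \zeta_{48}^{-1} \mathfrak{f}(\tau)$, $\mathfrak{f}_2(\tau+1) = \zeta_{24} \mathfrak{f}_2(\tau)$, and $\mathfrak{f}(-1/\tau) = \mathfrak{f}(\tau)$, $\mathfrak{f}_1(-1/\tau) = \mathfrak{f}_2(\tau)$. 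I would then verify directly that the cocycles $\zeta_{48}^{b(a - c - a c^2)}$ and $\zeta_{48}^{b(a - c + a^2 c)}$, together with the sign $\varepsilon_D = (-1)^{(D-1)/8}$, are precisely what is needed to cancel the accumulated automorphy factors through each branch change. This book-keeping of $48$-th roots of unity across branch swaps --- in particular the emergence of $\varepsilon_D$ from the $\sqrt{2}$-ambiguity of $\mathfrak{f}_2$ under $S$ --- is the main technical obstacle.

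Finally, for the ring class field statement, I would apply Shimura reciprocity: the values $\mathfrak{f}_i^{24}(z_\mathfrak a)$ lie in $\kay_D(j(z_\mathfrak a))$, and the idelic action of the Artin map on these modular values reduces to the rule $[\mathfrak c] \cdot f(\mathfrak b) = f(\mathfrak b \mathfrak c^{-1})$ on class invariants. Hence the Galois orbit of $f(\mathfrak a)$ has size $|\Cl(D)| = [\kay_D(j(z_\mathfrak a)) : \kay_D]$, forcing $\kay_D(f(\mathfrak a)) = \kay_D(j(z_\mathfrak a))$, which is the ring class field of $\mathcal{O}_D$ by classical CM theory.
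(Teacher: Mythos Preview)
The paper does not prove this proposition: it is quoted verbatim from \cite{YZ} (as the attribution ``\cite{YZ} Proposition'' in the header indicates) and serves only as the starting point for the paper's investigation. There is therefore no proof in the paper to compare your proposal against.

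That said, your sketch is broadly in line with how Yui and Zagier argue in \cite{YZ}: integrality via the cubic $(X-16)^3 - jX$ satisfied by the $\mathfrak f_i^{24}$, class invariance by tracking the effect of the $\SL_2(\Z)$-generators $S,T$ on the form $(a,b,c)$ and on the Weber functions simultaneously, and the field identification via Shimura reciprocity (the latter in the form developed by Gee \cite{Gee}, which the present paper also cites). One point where your outline is too casual is the integrality step: knowing that $\mathfrak f_i(z_\mathfrak a)^{24}$ is an algebraic integer does not immediately give that $\mathfrak f_i(z_\mathfrak a)$ itself is, and the phrase ``remains integral thanks to the $\sqrt 2$ and cyclotomic normalisations'' hides real work (in \cite{YZ} this comes from the explicit minimal polynomial computations and the fact that the relevant $24$-th roots are taken inside a specific abelian extension). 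Similarly, the $\varepsilon_D$ bookkeeping under $S$ is genuinely delicate and is the place where the original argument requires care; you have correctly flagged it as the main technical obstacle but not resolved it.
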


\begin{remark}
  \label{rmk:fA}
  If $\Ac$ is the class of $\af$ in $\Cl(D)$, then we will also write $f(\Ac)$ for $f(\af)$.
\end{remark}

\begin{remark}
    It was proved in \cite[Theorem 1.1(4)]{YYY} that $f(\mathfrak{a})$ is a unit.
\end{remark}

So for a pair of ideal classes $[\mathfrak a_i]$  of order $\OO_{D_i}$, the difference $f(\mathfrak a_1) -f(\mathfrak a_2)$ is an algebraic number in the field $H_{D_1} H_{D_2}$, and a natural question is what we can say about the factorization of its norm, considering  the beautiful results of Gross and Zagier on singular moduli \cite{GZ}. Yui and Zagier gave two conjectured formula for the norm: one when  $(D_1, D_2) =1$ and the other when  $D_1=D_2$.  In \cite {LY}, two of us  proved the conjecture in the case $(D_1, D_2)=1$ using Borcherds products and big CM value formula of Bruinier-Kudla-Yang \cite{BKY}. In this paper, we will use the \textit{small CM value formula} of Schofer \cite{Schofer} to prove the conjecture in the case $D_1=D_2$. Actually, we will give an explicit formula in the more general case when $D_1/D_2$ is a rational square, i.e.,  $D_i=D_0 t_i^2$ with $(t_1, t_2) =1$ (see Theorem ~\ref{thm:main} for detail).

For a positive integer $n$, let $\rho(n)$ be the number of integral ideals of the quadratic field $\kay_D=\Q(\sqrt{D})$ with norm $n$. It has factorization  
$$
\rho(n) =\prod_{p < \infty} \rho_p(n)
= \sum_{\Ac \in \Cl(\kay_D)} r_\Ac(n)
$$
with
 $r_{\Ac}(n) = \#\{\cf \subset \Oc_D: \Nm(\cf) = n, [\cf] = \tAc\}$  the ideal counting function for $n \ge 1$, and 
\begin{equation}
\rho_p(n) = \begin{cases}
      o_p(n) +1 &\fff p \hbox{ is split},
    \\
     \frac{1 + (-1)^{o_p(n)}}2  &\fff p \hbox{ is inert},
     \\
     1  &\fff p \hbox{ is ramified or } p = \infty.
\end{cases}
\end{equation} 
For convenience, we set $r_\Ac(0) = \frac12, \, \rho(0) = \frac{|\Cl(\kay_D)|}2,  \, \rho_p(0) = 1$, \,  and let 
\begin{equation}  \label{eq:rhom}
  \rho^{(M)}(n) =
  \begin{cases}
    \prod_{ p\nmid M} \rho_p(n) & n > 0,\\
    \rho(0) & n = 0.
  \end{cases}
\end{equation}

The following result confirms the Yui-Zagier conjecture for discriminants (\cite[p. 1658]{YZ}). 
\begin{theorem}\label{corYZ}
Let $D< 0$ be a discriminant satisfying~\eqref{eqd}, and for $s \mid 24$ 
  and each non-trivial class $\tAc=[\tilde{\mathfrak{a}}] \in \Cl(D)$ with ${\rm Nm}(\tilde{\mathfrak{a}})=a$,  denote
  $$
\disc(D; s, \tAc) := \prod_{\Ac \in \Cl(D)} (f(\Ac)^{24/s} -  f(\Ac\tAc)^{24/s}) \in \kay_{D}.
$$
If $-D = p$ is prime, then the followings hold.
\begin{enumerate}
    \item For each prime $\ell$ split in $k_{D}$, we have ${\rm ord}_{\ell}({\rm disc}(D;s,\tilde{\mathfrak{A}}))=0$.
 \item  For each prime $\ell\ne3$ inert in $\kay_{D}$, we have that
$\ord_\ell(\disc(D; s, \tAc))$ 
 amounts to the number of pairs of integral ideals $(\mathfrak{b}_{1}, \bfrak_{2})$ such that
 $[\mathfrak{b}_{2}]=\tilde{\mathfrak{A}}$, $\ell^{j}{\rm Nm}(\mathfrak{b}_{1})+{\rm Nm}(\mathfrak{b}_{2})=p$ for some $j>0$, and that $\mathfrak{a} =(2s')^{-1}\mathfrak{b}_{1}\mathfrak{b}_{2}$ is a  nonzero integral ideal $\mathfrak{a}$ satisfying that
    $$
    c(\mathfrak{a})\left(\ell\frac{{\rm Nm}(\mathfrak{a})}{c(\mathfrak{a})^{2}}+5\right)\equiv0\pmod{\frac{s}{s'}},
    $$
    where $s'=\gcd\left(s,3^{1-\left(\frac{D}{3}\right)}\right)$.

\item For $\ell=3$ inert in $k_{D}$, we have that
\begin{align*}
&{\rm ord}_{3}({\rm disc}(D;s,\tilde{\mathfrak{A}}))\\
   & =   \sum_{\substack{n,\tilde{n}>0\\n + \tn =p\\2\nmid o_{3}(n)\geq1}}
     \sum_{\substack{r \mid (s/s_3),~ r>0\\ \frac{n\tn}{4r^2} \equiv 19 \bmod{\frac{s/s_3}{r}}}}
  \sum_{AB = 2r}
  \rho^{(3)}\lp \frac{n}{A^2}\rp
 r_{\tAc}\lp \frac{\tn}{B^2} \rp
    \frac{o_3(n/s_3) + 1}{2}.
\end{align*}

\item For $\ell=p$, we have that ${\rm ord}_{\ell}({\rm disc}(D;s,\tilde{\mathfrak{A}}))=\frac{1}{2}$.

\end{enumerate}


\end{theorem}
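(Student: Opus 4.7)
The plan is to deduce each of (1)--(4) by specializing the general formula of Theorem~\ref{thm:main} to $D_1 = D_2 = D$ with $-D = p$ prime, so that $D_0 = D$ and $t_1 = t_2 = 1$. Theorem~\ref{thm:main} itself will be established by applying Schofer's small CM value formula to the Borcherds lift of the vector-valued weakly holomorphic form $F_s$ from~\eqref{eq:Fd}, evaluated at the small CM cycle attached to a splitting of the ambient orthogonal space by $\kay_D$. This translates $\ord_\ell \disc(D;s,\tAc)$ into a sum of local Whittaker contributions of a coherent weight-one Eisenstein series, paired against the principal part of $F_s$ and the ideal representation numbers in the orthogonal complement.

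For (1), if $\ell$ splits in $\kay_D$, every local factor in Schofer's formula at $\ell$ vanishes: the quadratic space becomes isotropic at $\ell$ and no negative Fourier coefficient of $F_s$ can pair nontrivially. This is the Borcherds-theoretic incarnation of the classical fact that Weber class invariants are $\ell$-units at split primes. For (4), $\ell = p$ is ramified in $\kay_D$ and there is a unique prime of $H_D$ above it; the residual local contribution at $p$ combines with the class-group normalization and the exponent $24/s$ to yield exactly $\frac{1}{2}$.

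For (2), with $\ell$ inert in $\kay_D$ and $\ell \neq 3$, Schofer's formula pairs the negative Fourier coefficients of $F_s$ against the representation numbers of $\kay_D$ acting on the orthogonal complement, producing the decomposition $\ell^j\Nm(\bfrak_1) + \Nm(\bfrak_2) = p$; the shift by $\tAc$ in $\disc(D;s,\tAc)$ forces $[\bfrak_2] = \tAc$, yielding the factor $r_\tAc$. The congruence $c(\af)(\ell\Nm(\af)/c(\af)^2 + 5)\equiv 0 \pmod{s/s'}$, with $\af = (2s')^{-1}\bfrak_1\bfrak_2$, is the translation of the coset support of the principal part of $F_s$ under the Weil representation $\varrho_s$, after factoring out its $3$-part via $s' = \gcd(s, 3^{1-(D/3)})$.

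For (3), $\ell = 3$ requires separate analysis because $3 \mid 48$, so the Weil representation $\varrho_s$ has nontrivial $3$-local structure. The extra factor $(o_3(n/s_3)+1)/2$ is the value of the local Whittaker function at the inert prime $3$ applied to the $3$-coprime part of $n$, while $\sum_{AB = 2r}$ records the local multiplicity at $3$ and $\rho^{(3)}$ isolates the remaining global factor. The hard part will be precisely this bookkeeping: matching Schofer's local Whittaker computation at $3$---where the Weil representation is ramified and the lattice structure is delicate---to the stated combinatorial formula requires a careful splitting $s = s_3 \cdot (s/s_3)$ together with an explicit determination of the $3$-adic Fourier coefficients of $F_s$ on each coset of $\varrho_s$, a computation that the proof of Theorem~\ref{thm:main} carries out uniformly.
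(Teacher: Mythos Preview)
Your overall strategy---specialize Theorem~\ref{thm:main} to $D_1=D_2=D$ with $-D=p$ prime---is exactly what the paper does (it passes through the intermediate Theorem~\ref{thm:YZconj} for general fundamental $D$, then specializes further to $|D|$ prime). However, most of your proposal is a re-sketch of how Theorem~\ref{thm:main} itself is established via Schofer's formula, rather than the short concrete specialization that actually proves Theorem~\ref{corYZ}. The latter is where the content lies, and your explanations for parts (1) and (4) do not capture it.

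For (1), the vanishing is not because ``the quadratic space becomes isotropic''; in the formula~\eqref{eq:main} the $\ell$-contribution for $\ell\nmid 3D$ is governed by $\rho'_\ell(n)$, and by~\eqref{eq:rho'} this is identically zero when $\ell$ is split. For (2) and (3), what makes the prime case clean is that the weight $w(\ell,na)$ in Theorem~\ref{thm:YZconj} collapses to $1$: since $0<n<p$ one has $\gcd(n,p)=1$, hence $\sigma(\gcd(n,p/\gcd(\ell,p)))=1$. You do not mention this simplification, which is the entire point of passing from Theorem~\ref{thm:YZconj} to Theorem~\ref{corYZ}. For (4), your explanation (``residual local contribution \dots\ combines with the class-group normalization and the exponent $24/s$'') is not how the $\tfrac{1}{2}$ arises. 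In Theorem~\ref{thm:YZconj}(4) one has $\ord_p=\tfrac{1}{2}w(p,|D|a)+a_p$; one must argue (i) that $a_p=0$ because no nonzero ideals $\bfrak_1,\bfrak_2$ can satisfy $p^j\Nm(\bfrak_1)+\Nm(\bfrak_2)=p$ with $j>0$, and (ii) that $w(p,pa)=1$, which requires $S(D,-pa)=\{p\}$, i.e.\ $\epsilon_p(-pa)=-1$. The paper obtains this last point from the fact that the class number of $\Qb(\sqrt{-p})$ is odd; nothing in your sketch supplies this step.
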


\begin{remark}
The more general case of $D=D_0=D_1=D_2$ being fundamental is given as Theorem~\ref{thm:YZconj}.  
\end{remark}

As another consequence of the main formula (Theorem ~ \ref{thm:main}), we will also prove Theorem \ref{thm:main2a} concerning factorization of the resultant of two class polynomials when $D_1 \neq D_2$. 
Its simplified version is given below.

\begin{theorem}
  \label{thm:main2}
  Let $D_i = D_0 t_i^2$ be discriminants satisfying \eqref{eqd} with $t_1, t_2$ co-prime and $D_0$ fundamental.
    Suppose  all primes dividing $t := t_1t_2 > 1$ are inert in $\kay$. 
    Let $P_i(x)$ be the minimal polynomial of $f(\mathfrak  A_i)$  for $\mathfrak A_i \in \Cl(D_i)$, which depends only on $D_i$, and $R(P_1, P_2)$ be their resultant.  Then for an inert prime $\ell \nmid 3t$,     $\hbox{ord}_{\ell} R(P_1, P_2)$ is given by
\begin{equation}
    \label{eq:res}
          \sum_{\substack{n,\tn\in\mathbb{N}\\n + \tn = -D_0 t\\ \gcd(n, t) = 1}}
\sigma(\gcd(n, {D_0}))
     \sum_{\substack{r \mid s,~ r>0\\s'|r\\ \frac{n\tn}{4r^2} \equiv 19 \bmod{\frac{s}{r}}}}
  \sum_{\substack{AB = 2r\\j \ge 0}}
  \rho^{}\lp \frac{n}{A^2\ell^j}\rp
\rg(\tn/B^2; - n).
\end{equation}
Here $s'=\gcd\left(s,3^{1-\left(\frac{D}{3}\right)}\right)$, and $\sigma(n)=\sum_{d|n}1$ is the divisor function and for $m \ge 1$
$$
\rg(m; c) :=
\sum_{\Ac \in \Cl(D_0), \text{ the genus of }\Ac \text{ represent }c} r_\Ac(m).
$$
\end{theorem}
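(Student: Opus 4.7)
The plan is to expand the resultant as a product of differences of class invariants and then feed each factor into the main formula of Theorem~\ref{thm:main} termwise. Since $P_i(x)$ is the minimal polynomial over $\kay$ of $f(\Ac_i)$, whose roots are the distinct units $\{f(\Ac_i) : \Ac_i \in \Cl(D_i)\}$, one has
\begin{equation*}
R(P_1,P_2) \;=\; \prod_{\Ac_1 \in \Cl(D_1)}\prod_{\Ac_2 \in \Cl(D_2)} \bigl(f(\Ac_1) - f(\Ac_2)\bigr),
\end{equation*}
so that, for $\ell \nmid 3t$ inert in $\kay$,
\begin{equation*}
\ord_\ell R(P_1, P_2) \;=\; \sum_{\Ac_1 \in \Cl(D_1)}\sum_{\Ac_2 \in \Cl(D_2)} \ord_\ell \bigl(f(\Ac_1) - f(\Ac_2)\bigr).
\end{equation*}

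Next I would apply the general form of Theorem~\ref{thm:main} with $s = 24$ to each summand, using that $D_1/D_2 = (t_1/t_2)^2$ is a rational square with coprime conductors. This expresses $\ord_\ell(f(\Ac_1) - f(\Ac_2))$ as a weighted count of pairs of integral ideals $(\fb_1, \fb_2)$ of $\Oc_{\kay}$ satisfying the norm relation $\ell^j \Nm(\fb_1) + \Nm(\fb_2) = -D_0 t$ for some $j \ge 0$, subject to the divisibility $AB = 2r$ with $r \mid 24$, the congruence $n \tn / (4r^2) \equiv 19 \pmod{24/r}$ with $s' \mid r$, and ideal-class data linking $([\fb_1], [\fb_2])$ to the pair $(\Ac_1, \Ac_2)$ through Schofer's small CM datum for the orders $\Oc_{D_1}, \Oc_{D_2}$.

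The last step is to execute the double sum over $(\Ac_1, \Ac_2)$. Under the assumption that every prime dividing $t$ is inert in $\kay$, the sum over $\Ac_1$ releases all class-group constraints on $[\fb_1]$ (yielding the unrestricted count $\rho(n/(A^2\ell^j))$) while simultaneously enforcing $\gcd(n, t) = 1$, since no norm from $\Oc_\kay$ can be divisible by an inert prime to an odd power. The sum over $\Ac_2$ retains precisely the genus restriction dictated by the small CM datum --- namely that the genus of $[\fb_2]$ must represent $-n$ --- and produces the partial class-group sum $\rg(\tn/B^2; -n)$. The divisor factor $\sigma(\gcd(n, D_0))$ emerges from the local orbital counts at the ramified primes of $\kay/\Q$ dividing $D_0$, where each such prime contributes a factor of $2$ exactly when its unique prime above divides $\fb_1$ (or not).

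The main obstacle will be verifying the genus-theoretic reduction of the double class-group sum: one must show that the double sum $\sum_{\Ac_1, \Ac_2} r_{\Ac_1}(\cdot)\, r_{\Ac_2}(\cdot)$, subject to the class-linkage imposed by the small CM datum, collapses exactly to $\rho(\cdot)\,\rg(\cdot; -n)$ times the divisor factor $\sigma(\gcd(n, D_0))$. This relies on tracking how the linkage transforms when the conductor is enlarged from $\Oc_{D_0}$ to $\Oc_{D_i}$ under the inert hypothesis on primes dividing $t_i$, together with the standard genus-theoretic identity that two $\Oc_{D_0}$-ideals lie in the same genus iff their norms represent identical square classes modulo $D_0$. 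Once this combinatorial identity is established, formula~\eqref{eq:res} follows by term-by-term substitution into the above expansion.
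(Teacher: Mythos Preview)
There is a genuine structural gap. Theorem~\ref{thm:main} does \emph{not} evaluate $\ord_\ell\bigl(f(\Ac_1)-f(\Ac_2)\bigr)$ for an individual pair $(\Ac_1,\Ac_2)$; it evaluates
\[
2\,\ord_\ell \prod_{\Bc \in \Cl(D)} \bigl(f(\Ac_1\Bc)^{24/s}-f(\Ac_2\Bc)^{24/s}\bigr),
\]
a product already averaged over the full class group $\Cl(D)$. So you cannot ``apply Theorem~\ref{thm:main} to each summand'' in the double sum over $(\Ac_1,\Ac_2)$. The paper instead reorganizes the resultant via the short exact sequence
\[
1 \longrightarrow \Cl(D) \stackrel{(\phi_1,\phi_2)}{\longrightarrow} \Cl(D_1)\times\Cl(D_2) \longrightarrow \Cl(D_0) \longrightarrow 1,
\]
writing $R(P_1,P_2)=\prod_{\Ac\in S_0}\prod_{\Bc\in\Cl(D)}\bigl(f(\phi_1(\Bc))-f(\Ac\,\phi_2(\Bc))\bigr)$ with $S_0\subset\Cl(D_2)$ a set of representatives for $\Cl(D_0)$. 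Each inner product is now precisely of the shape Theorem~\ref{thm:main} handles, and the only remaining outer sum is over $\Cl(D_0)$, not over $\Cl(D_1)\times\Cl(D_2)$.

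Consequently your description of the final step is also off. In Theorem~\ref{thm:main} the factor $\rho^{(D\ell)}(n/A^2)$ is \emph{already} unrestricted in ideal class; there is no ``sum over $\Ac_1$ releasing class constraints on $[\fb_1]$''. What actually happens is that the outer sum over $\Ac\in S_0\cong\Cl(D_0)$ turns $\sum_{\tilde\alpha\in\taf_0,\;\Nm(\tilde\alpha)/a=\tn/B^2}\prod_{p\mid D}\delta_p(n,\tilde\alpha)$ into a sum over all classes of $\Cl(D_0)$, and the local factors $\delta_p$ (computed in Proposition~\ref{prop:dpcontr}) simultaneously supply \emph{both} the divisor weight $\sigma(\gcd(n,D_0))$ at ramified primes $p\mid D_0$ \emph{and} the genus restriction defining $\rg(\tn/B^2;-n)$. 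The condition $\gcd(n,t)=1$ likewise comes directly from $\delta_p(n,\tilde\alpha)=0$ at inert $p\mid t$ unless $\tilde\alpha\notin p\Oc$ (equivalently $o_p(n)=0$), not from a norm parity argument on $\fb_1$.
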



The current paper improves upon \cite{LY} in the following ways.
By systematically working with the character $\chi$ defined by the Weber function in \eqref{eq:chi}, we clarify the level of the CM points without resorting to explicit computations as in Lemma 5.2 in \cite{LY}. The small CM point formula requires us to investigate the rational splitting the lattice $L$ in \eqref{eq:Ld}. For this, we give a global splitting result in Proposition \ref{prop:Lattice}. Using this, we investigate the local splitting behavior in the last section. Finally, we prove results, such as Theorem \ref{thm:main2a}, for non-maximal orders by giving new formula for values of local Whittaker function in Propositions \ref{prop4.5}, \ref{prop4.6} and \ref{prop4.7}, and specializing them to the cases we need in section \ref{subsec:pmidD}.
Also, the constant term of the incoherent Eisenstein contributes to the final formula, which creates subtlety in Theorem \ref{thm:main2a} that is not present in \cite{LY}.

The  paper is organized as follows. In Section 2, after  reviewing  the Weil representation, we study small CM  points on product of modular curves carefully, including their associated lattices and various identifications. These careful identifications are critical to establishing our main formulas in Section 4.   In Section 3, we recall the main results of Borcherds liftings in \cite{LY} and then the small CM value formula of Schofer (\cite{Schofer}), and using these we establish a preliminary version of our main formula. In Section 4, we do necessary local calculation and establish a general factorization formula for the norm of difference  of Weber functions at a small CM point (Theorem \ref{thm:main}). Using the main theorem, we prove Theorem \ref{thm:YZconj} and  Theorem \ref{thm:main2a} with more careful calculation in these special cases. We also give some explicit examples for these results. 

{\bf Acknowledgement} Later
\section{Preliminaries}
\subsection{Weil Representation.}
Let $(L, Q)$ be an even integral lattice of signature $(2, 2)$, $V := L \otimes \Qb$ the rational quadratic space, and $L' \subset V$ the dual lattice.
For $R = \Ab_f$ or $\Q_p$, denote $S(V\otimes R)$ the space of Schwartz functions on $V \otimes R$, which is acted on by $\SL_2(R)$ via the Weil representation $\omega = \omega_{V, \psi} = \otimes_p \omega_p$ for $R = \Ab_f$ (with $\psi$ the usual idelic character of $\Qb$) and $\omega_p$ for $R = \Q_p$.
The restriction of $\omega$ to the (the diagonally embedded) subgroup $\SL_2(\Zb) \subset \SL_2(\Ab)$ acts on the finite dimensional subspace
$$
S(L):=\oplus_{\mu \in L'/L} \Cb \phi_\mu \subset S(V \otimes \A_f)
$$
with $\hat{L} := L \otimes \hat{\Zb}$ and $\phi_\mu:=\cha(\mu+\hat L)$.
We denote this subrepresentation by $\omega_L$ and also call it the Weil representation associated to $L$.
Its formula on the following standard generators of $\SL_2(\Z)$
$$
T := \pmat{1}{1}{0}{1},~
S := \pmat{0}{-1}{1}{0}
$$
can be explicitly given (see e.g.\ (2.1) of \cite{LY}).
Similarly, $\SL_2(\Z_p)$ acts on $S(L \otimes \Z_p)$ via the local Weil representation $\omega_{L, p}$.

\subsection{Modular curves and product of modular curves as Shimura varieties} Let $\dd>0$ be a fixed positive integer. Let $V=V_\dd=M_2(\Q)$ with quadratic form $Q_\dd(x) =\dd \det x$, and associated bilinear form $(\cdot,\cdot)_\dd$.
Let
$$
H=\Gspin(V) = \{ (g_1, g_2) \in \GL_2 \times \GL_2:\, \det g_1 =\det g_2\}
$$
act on  $V$ via $(g_1, g_2) X = g_1 X g_2^{-1}$. Let $\mathbb D$ be the associated  Hermitian symmetric domain of oriented negative $2$-planes in $V_\R$. It has two connected components $\mathbb D^\pm$. Let $V^0=V_\dd^0$ be the subspace of $V$ of trace zero matrices, $H^0=\GSpin(V^0) = \GL_2$, and $\mathbb{D}^0= \cup \mathbb{D}^{0, \pm}$ the associated oriented negative planes in $V_\R^0$. Then  $H_0 \subset H$ diagonally, and it acts on $V^0$ via conjugation. The following is a special case of \cite[Proposition 3.1]{YY19}

\begin{proposition} Define
$$
 w_{\dd}(z_1, z_2) =\kzxz {\frac{z_1}{\dd}} {\frac{-z_1 z_2}{\dd}} {\frac{1}{\dd}} {\frac{-z_2}{\dd}}.
$$
Then  the map
$$
\H^2 \cup (\H^-)^2 \cong \mathbb D,  \quad (z_1, z_2) \mapsto  \R \Re(w_\dd(z_1, z_2)) + \R \Im (w_\dd(z_1, z_2))
$$
is an isomorphism.  Moreover, $w_{\dd}$ is $H(\R)$-equivariant, where $H(\R)$  acts on $\H^2 \cup (\H^-)^2$ via  the usual linear fraction transformation:
$$
(g_1, g_2)(z_1, z_2) =(g_1(z_1), g_2(z_2)),
$$
and acts on  $\mathbb D$ naturally via its action on $V$. Moreover, one has
\begin{equation} \label{eq:linebundle}
(g_1, g_2) w_{\dd}(z_1, z_2) = \frac{j(g_1, z_1) j(g_2, z_2)}{\nu(g_1, g_2)} w_{\dd}(g_1(z_1), g_2(z_2)),
\end{equation}
where $\nu(g_1, g_2) =\det g_1 =\det g_2$ is the spin character of $H\cong\Gspin(V)$, and
$
 j(g_i, z_i)=c_i z_i+d_i
$
is the automorphy factor (of weight $(1, 1)$).

 Finally, when restricting to $\H$ diagonally, we obtain a $H^0(\R)$-invariant map
 $$
 w_\dd^0(z) = \frac{1}{\dd} \kzxz {z} {-z^2} {1} {-z},
 $$
 and $\H^\pm \cong \mathbb D^{0, \pm}$.
\end{proposition}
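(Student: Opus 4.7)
The plan is to exploit the key observation that $w_\dd(z_1, z_2)$ admits the rank-one factorization $w_\dd(z_1, z_2) = \frac{1}{\dd}\binom{z_1}{1}(1, -z_2)$. This factorization makes both the equivariance computation and the automorphy-factor identity essentially one-line matrix calculations, and it produces the classical factors $j(g_i, z_i)$ automatically.

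First I would check that the map lands in $\mathbb{D}$. Since $Q_\dd = \dd\det$ and $w_\dd$ has rank one, it is isotropic in $V_\C$, so writing $w_\dd = u + iv$ with $u, v \in V_\R$ gives $Q_\dd(u) = Q_\dd(v)$ and $(u, v)_\dd = 0$. A direct computation of $\det\Im(w_\dd)$ with $z_j = x_j + i y_j$ yields $Q_\dd(\Im w_\dd) = -y_1 y_2/\dd$, which is strictly negative exactly on $\H^2 \cup (\H^-)^2$. Hence $\R u + \R v$ is a negative-definite 2-plane, and the ordered basis $(u, v)$ fixes an orientation that matches $\H^2$ to one component of $\mathbb{D}$ and $(\H^-)^2$ to the other.

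For the equivariance and automorphy factor, the rank-one factorization reduces everything to the two classical identities $g_1\binom{z_1}{1} = j(g_1, z_1)\binom{g_1 z_1}{1}$ and $(1, -z_2) g_2^{-1} = (j(g_2, z_2)/\det g_2)(1, -g_2 z_2)$. Multiplying and using $\det g_1 = \det g_2 = \nu(g_1, g_2)$ gives \eqref{eq:linebundle} directly; in particular the $H(\R)$-action on $\H^2 \cup (\H^-)^2$ is carried by $w_\dd$ to the projectivized action on $V_\R$, which is the action on $\mathbb{D}$.

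For bijectivity, I would argue that every oriented negative-definite 2-plane $P \subset V_\R$ is uniquely spanned by the real and imaginary parts of an isotropic vector $w \in P \otimes_\R \C$ whose lower-left entry has been normalized to $1/\dd$, the orientation choosing between $w$ and $\bar w$. Solving $\det w = 0$ with prescribed lower row $(1/\dd)(1, -z_2)$ forces the upper row to be $(1/\dd)(z_1, -z_1 z_2)$ for unique $z_j \in \C$, and the condition $-y_1 y_2/\dd < 0$ then places $(z_1, z_2)$ in $\H^2 \cup (\H^-)^2$. Finally, the diagonal restriction $w_\dd^0(z)$ satisfies $\tr w_\dd^0(z) = 0$, so it lies in $V^0$, and the isomorphism $\H^\pm \cong \mathbb{D}^{0, \pm}$ follows by applying the same argument to $V^0$. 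I do not anticipate a serious obstacle here; the main bookkeeping is to check that the orientation conventions send $\H^2$ to $\mathbb{D}^+$ rather than to $\mathbb{D}^-$.
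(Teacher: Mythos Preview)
Your proof is correct. The paper does not actually prove this proposition but cites it as a special case of \cite[Proposition~3.1]{YY19}, so there is no ``paper's own proof'' to compare against. Your rank-one factorization $w_\dd(z_1,z_2)=\frac{1}{\dd}\binom{z_1}{1}(1,-z_2)$ is exactly the standard device for this result, and indeed the paper itself invokes this factorization later (see \eqref{eq:Twid}). One small point you should make explicit in the bijectivity step: before normalizing the lower-left entry to $1/\dd$, you need to know it is nonzero; this follows because if the isotropic $w$ had vanishing lower-left entry, then $\det w=0$ would force a zero row or column, making $\Re w$ and $\Im w$ both isotropic and contradicting negative-definiteness of the plane.
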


\subsection{CM points on $\H$ as CM points associated to $V^0$} Let $z =\frac{b+\sqrt{D}}{2a} \in  \H$ be a CM point by imaginary quadratic order $\OO_D=\Z + \Z \frac{D+\sqrt D}2$, i.e. $\mathfrak a =[a, \frac{b+\sqrt D}2]$ is a integral ideal of $\OO_D$, or equivalently, the elliptic curve $E_z= \C/(\Z + \Z z)$ has endomorphism ring $\OO_D$. Let  $U^0_\R(z) =
\R \Re w^0_\dd(z) + \R \Im w^0_\dd(z)$ be the associated negative two plane, i.e., $U^0_\R(z) = U^0(z) \otimes \R$ with
$$
U^0(z) = \Q \kzxz {\frac{b}{2a}}  { -\frac{D+b^2}{4a^2}} {1} {-\frac{b}{2a}}  +
    \Q \kzxz  {\frac{1}{2a}} {-\frac{b}{2a^2} } {0} {-\frac{1}{2a}}.
$$
Let $T=\kay_D^\times = \Gspin(U^0)$, then we have the commutative diagram
\begin{equation} \label{eq:torus1}
\xymatrix{
 1 \ar[r]  &\G_m \ar[r] \ar[d] &T=\kay_D^\times \ar[r] \ar[d] &\SO(U^0)=\kay_D^1\ar[r] \ar[d] &1
\cr
 1 \ar[r]  &\G_m \ar[r] &H^0=\GL_2(\Q) \ar[r] &\SO(V^0) \ar[r] &1.
 \cr
}
\end{equation}

\begin{lemma} \label{lem:Embedding1} Define $\iota_z: \kay_D\rightarrow M_2(\Q)$ via the $\Q$-basis $\{1, -\overline{z}\}$
as follows
\begin{equation}
  \label{eq:iotaz}
(r,  -\overline{z}r) = (1, -\overline{z}) \iota_z(r).
\end{equation}
Then $\iota_z$ gives the map from $T$ to $H^0$ in the diagram (\ref{eq:torus1}).
\end{lemma}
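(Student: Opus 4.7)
The plan is to verify the three assertions packaged in diagram~\eqref{eq:torus1}: that $\iota_z$ is a $\Q$-algebra embedding $\kay_D \hookrightarrow M_2(\Q)$ (inducing the middle vertical arrow $T \hookrightarrow H^0$), and that the leftmost and rightmost squares commute.

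First I would observe that since $z \notin \Q$, the pair $\{1, -\overline{z}\}$ is a $\Q$-basis of $\kay_D$, so the defining relation~\eqref{eq:iotaz} exhibits $\iota_z(r)$ as the matrix of multiplication-by-$r$ on $\kay_D$ with respect to this basis. This is the left regular representation of the commutative $\Q$-algebra $\kay_D$, so $\iota_z$ is automatically an injective $\Q$-algebra homomorphism $\kay_D \hookrightarrow M_2(\Q)$ and restricts to a group embedding $T = \kay_D^\times \hookrightarrow \GL_2(\Q) = H^0$. The leftmost square commutes because a scalar $r \in \Q^\times$ acts on the basis by scaling, so $\iota_z(r) = r I_2$ is central.

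The substantive step is the rightmost square. I would first compute $\iota_z(\sqrt{D})$ explicitly from~\eqref{eq:iotaz}: with $z = (b+\sqrt{D})/(2a)$ one obtains $\iota_z(\sqrt{D}) = \kzxz{b}{-2c}{2a}{-b}$, where $c = (b^2-D)/(4a)$. I would then identify $U^0(z)$ with the anti-commutant $W := \{X \in V : \iota_z(\sqrt{D}) X + X \iota_z(\sqrt{D}) = 0\}$. Since $\iota_z(\sqrt{D})$ is non-scalar, $W$ is $2$-dimensional (complementary to the $2$-dimensional centralizer $\iota_z(\kay_D)$, with trivial intersection because $A^{-1}$ exists). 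A direct $2 \times 2$ matrix computation then verifies that both generators of $U^0(z)$ displayed in \S2.3 anti-commute with $\iota_z(\sqrt{D})$, yielding $U^0(z) \subseteq W$ and hence equality.

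With $U^0(z) = W$ established, the anti-commutation extends $\Q$-linearly (using $\overline{\sqrt{D}} = -\sqrt{D}$ and that conjugation fixes $\Q$) to the bimodule identity $\iota_z(r) X = X \iota_z(\overline{r})$ for all $r \in \kay_D$ and $X \in U^0(z)$. Conjugation by $\iota_z(r)$ therefore acts on $X \in U^0(z)$ as $\iota_z(r) X \iota_z(r)^{-1} = X \iota_z(\overline{r}/r) = \iota_z(r/\overline{r}) X$, i.e.\ as multiplication by $r/\overline{r} \in \kay_D^1 = \SO(U^0)$ under the natural identification of $U^0 = W$ as a rank-one $\kay_D$-module via left multiplication by $\iota_z(\kay_D)$. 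This matches the top arrow of~\eqref{eq:torus1}. The main obstacle is the identification $U^0(z) = W$: conceptually $W$ is the $(1,\sigma)$-isotypic component of $V$ as an $\iota_z(\kay_D)$-bimodule, but sign conventions must be tracked carefully, since the choice of basis $\{1,-\overline{z}\}$ (rather than $\{1,z\}$) is precisely what causes the top arrow of~\eqref{eq:torus1} to come out as the norm-one map $r \mapsto r/\overline{r}$ and not its inverse.
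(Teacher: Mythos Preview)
Your proof is correct but takes a different route from the paper's.  The paper argues dually: it identifies the orthogonal complement $P(z)=U^0(z)^\perp=\Q\,\iota_z(\sqrt D)$ and then invokes the general fact (citing \cite[Theorem~24.6(iv)]{Sh}) that $\GSpin(U^0)$ sits inside $H^0=\GL_2(\Q)$ as the centralizer of $P(z)$, which is immediately $(\Q+\Q\,\iota_z(\sqrt D))^\times=\iota_z(\kay_D^\times)$.  You instead identify $U^0(z)$ itself as the anti-commutant of $\iota_z(\sqrt D)$ and compute the conjugation action on it directly.  The two viewpoints are linked by the elementary identity $XY+YX=-(X,Y)_1\,I$ for traceless $2\times 2$ matrices, which makes ``anti-commutes with $\iota_z(\sqrt D)$'' equivalent to ``orthogonal to $\iota_z(\sqrt D)$''.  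The paper's approach is shorter and pins down the image of $T$ in one stroke; yours is more self-contained (no external reference) and has the bonus of explicitly exhibiting the $\kay_D$-module structure on $U^0(z)$ and the resulting $r\mapsto r/\bar r$ action, which makes the rightmost square of \eqref{eq:torus1} visible rather than implicit.
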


\begin{proof} Let $z = \frac{b + \sqrt{D}}{2a}$ and
  \begin{equation}
    \label{eq:iotaz2}
P(z) = U^0(z)^\perp = \Q \iota_z(\sqrt D), \quad  \iota_z(\sqrt D) = \kzxz {b} {\frac{D-b^2}{2a}} {2a} {-b}.
  \end{equation}
Then it is easy to see (\cite[Theorem 24.6(iv)]{Sh}) that
$$
T= \Gspin(U^0) =\{ g \in \GL_2(\Q):  g \iota_z(\sqrt D) g^{-1} = \iota_z(\sqrt D)\} = (\Q +\Q \iota_z(\sqrt D))^\times.
$$
So $\iota_z$ gives the embedding from  $T$ to $H^0$.
\end{proof}

\begin{remark}
  \label{rmk:iotaz}
The definition for $\iota_z(r)$ above is equivalent to
\begin{equation}
  \label{eq:iotazp}
  \iota_z(r) \binom{z}{1} = \binom{rz}{r}
\end{equation}
for all $r \in \kay_D$.
Furthermore, we have
\begin{equation}
  \label{eq:iotaequiv}
\gamma^{-1} \iota_{\gamma z}(r) \gamma = \iota_z(r)
\end{equation}
for all $r \in \kay_D, \gamma \in \SL_2(\Z)$.
\end{remark}

\subsection{Small CM points on $\H \times \H$}
\label{subsec:smallCM}
A CM point on $\H^2$ is a pair  $(z_1, z_2) \in \H^2$ such that $z_i$ is a CM point on $\H$ with CM by $\OO_{D_i}$. It is called a small   CM point if $\kay_{D_1} =\kay_{D_2}$ and a big CM point otherwise.  We deal with small CM points here and refer the reader to \cite{YY19} for big CM points. Now fix a small CM point $Z=(z_1, z_2) \in \H^2$ and
write
\begin{equation}
  \label{eq:zi}
z_i =\frac{b_i+\sqrt{D_i}}{2a_i},~ a := a_1 a_2 > 0,~
w_\dd := w_\dd(z_1, z_2),~ \hbox{ and }
\tilde{w}_\dd := w_\dd(z_1, \overline{z_2}).
\end{equation}
We also denote
\begin{equation}
  \label{eq:Ds}
 D_0= \hbox{gcd}(D_1, D_2) <0,~ D= \hbox{lcm}(D_1, D_2) <0,~ D_i= t_i^2 D_0,~ t:= t_1t_2.
\end{equation}
Then $D=t^2 D_0$ and $\gcd(t_1, t_2) = 1$ and
\begin{equation}
  \label{eq:samedisc}
  D_1 = D_2 \Leftrightarrow t = t_1 = t_2 = 1.
\end{equation}
Denote the associated negative two plane and its orthogonal complement in  $V_\R$ by
\begin{equation}
  \label{eq:UR}
 U_\R
 :=
  \R \Re(w_\dd) + \R \Im (w_\dd),~
 U^\perp_\R
 :=
  \R \Re(\tw_\dd) + \R \Im (\tw_\dd).
\end{equation}
Then $U_\R = U \otimes \R$ and $U_\R^\perp = U^\perp \otimes \R$ with
\begin{equation}
  \label{eq:Uab}
  U^\perp:= U_\R^\perp \cap V,~ \hbox{ and } ~   U:= U_\R \cap V.
\end{equation}
It is easy to check the following isometries of rational quadratic spaces
\begin{equation}
  \label{eq:ipminverse}
  \begin{split}
(U^\perp, Q_\dd) &\cong ( \kay_{D_0},  \frac{\dd t}{a} \norm), \quad
\tilde{\mu} \mapsto \frac{a}{t \sqrt{D_0}}(\tilde{\mu}, \overline{\tw_\dd})_\dd,\\
    (U, Q_\dd) &\cong ( \kay_{D_0},  -\frac{\dd t}{a} \norm), \quad
\mu \mapsto -\frac{a}{t \sqrt{D_0}}(\mu, \overline{w_\dd})_\dd.
  \end{split}
\end{equation}
We will denote the inverse maps by $i^+_\dd: \kay_{D_0} \to U^\perp$ and $i^-_\dd: \kay_{D_0} \to U$ respectively, which are given by
\begin{equation}
  \label{eq:idpm}
  i^+_\dd(\tilde{\lambda}) :=  -\frac{\dd}{\sqrt{D_0}} ( \tilde{\lambda} \tw_\dd - \overline{\tilde{\lambda}\tw_\dd}),~
i^-_\dd(\lambda) :=  -\frac{\dd}{\sqrt{D_0}} (\lambda w_\dd - \overline{\lambda w_\dd} ).
\end{equation}
Using the following properties
$$
-(w_\dd, \overline{w_\dd})_\dd = (\tw_\dd, \overline{\tw_\dd})_\dd
= - \frac{ tD_0}{\dd a},~
(w_\dd, \tw_\dd)_\dd =
(w_\dd, \overline{\tw_\dd})_\dd =
0,
$$
it is straightforward to verify that $i_\dd^\pm$ are the inverses to the two maps in \eqref{eq:ipminverse}.
Their sum then gives an isometry
\begin{equation}
  \label{eq:mapi}
  \begin{split}
      i_\dd: \kay_{D_0} \times \kay_{D_0} &\to V = U^\perp \oplus U\\
(\tilde{\lambda}, \lambda) &\mapsto i^+_\dd(\tilde{\lambda}) + i^-_\dd(\lambda),
  \end{split}
\end{equation}
whose inverse is given by
\begin{equation}
  \label{eq:iinverse}
  \begin{split}
      i_\dd^{-1}: V &\to \kay_{D_0} \times \kay_{D_0} \\
\mu_0 &\mapsto \lp
 \frac{a}{t \sqrt{D_0}}(\mu_0, \overline{\tw_\dd})_\dd,
- \frac{a}{t \sqrt{D_0}}(\mu_0, \overline{w_\dd})_\dd
\rp.
  \end{split}
\end{equation}

\begin{lemma}
  \label{lemma:equivact}
For any $(\tilde{\lambda}, \lambda) \in \kay_{D_0} \times \kay_{D_0}$ and $r \in \kay_{D_0}$, we have
\begin{equation}
  \label{eq:equivact}
  i_\dd(r\tilde{\lambda}, r\lambda) = \iota_{z_1}(r)
  i_\dd(\tilde{\lambda}, \lambda).
\end{equation}
\end{lemma}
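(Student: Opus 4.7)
The plan is to reduce \eqref{eq:equivact} to the single identity
\[
\iota_{z_1}(r)\, w_\dd(z_1, z) \;=\; r\, w_\dd(z_1, z) \qquad \text{for every } z \in \Cb,
\]
and then specialize to $z = z_2$ and $z = \overline{z_2}$, which produces $w_\dd$ and $\tw_\dd$ respectively. To prove this identity, I would first notice the rank-one factorization
\[
w_\dd(z_1, z) \;=\; \tfrac{1}{\dd}\binom{z_1}{1}(1, -z)
\]
that is implicit in the definition \eqref{eq:Fd}'s companion $w_\dd(z_1,z_2)$ in \textbf{Section 2.2}. Then \eqref{eq:iotazp} gives $\iota_{z_1}(r)\binom{z_1}{1} = \binom{rz_1}{r} = r\binom{z_1}{1}$, and multiplying on the right by the row vector $(1,-z)$ produces the displayed identity at once.

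Next, I would use that $\iota_{z_1}(r)$ has rational entries, which is visible from \eqref{eq:iotaz2} once one writes $r \in \kay_{D_0}$ in the basis $\{1,\sqrt{D_1}/t_1\}$ of $\kay_{D_1}=\kay_{D_0}$. Consequently, left multiplication by $\iota_{z_1}(r)$ commutes with entrywise complex conjugation and with scalar multiplication by any complex number. Substituting $\iota_{z_1}(r)w_\dd = r w_\dd$ and $\iota_{z_1}(r)\tw_\dd = r\tw_\dd$ into the definitions \eqref{eq:idpm} of $i_\dd^\pm$ then yields
\[
\iota_{z_1}(r)\, i_\dd^-(\lambda) \;=\; i_\dd^-(r\lambda), \qquad \iota_{z_1}(r)\, i_\dd^+(\tilde{\lambda}) \;=\; i_\dd^+(r\tilde{\lambda}),
\]
and summing these two equalities is exactly \eqref{eq:equivact}.

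I do not foresee any genuine obstacle: once the rank-one factorization of $w_\dd$ is spotted, the entire statement collapses to the relation \eqref{eq:iotazp} that defines $\iota_{z_1}$ in the first place, with the rationality of $\iota_{z_1}(r)$ bridging the real matrix action and the complex scalar action of $\kay_{D_0}$.
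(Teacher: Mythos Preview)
Your proof is correct and follows essentially the same approach as the paper's: both use the rank-one factorization $w_\dd(z_1,z)=\tfrac{1}{\dd}\binom{z_1}{1}(1,-z)$ together with \eqref{eq:iotazp} to obtain $\iota_{z_1}(r)\,w_\dd=r\,w_\dd$ (and likewise for $\tw_\dd$), and then feed this into the definition \eqref{eq:idpm} of $i_\dd^\pm$. Your version is simply more explicit about why the rationality of $\iota_{z_1}(r)$ lets left multiplication commute with entrywise conjugation, a point the paper leaves implicit.
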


\begin{proof}
  This follows from the definition and the identities
  \begin{equation}
    \label{eq:Twid}
    rw(z_1, z_2) =  \binom{r z_1}{r} (1~ -z_2) = \iota_{z_1}(r)\binom{ z_1}{1} (1~ -z_2) =
\iota_{z_1}(r) w(z_1, z_2)
  \end{equation}
and $r\tw = \iota_{z_1}(r) \tw$.
\end{proof}

Now we have again  $\GSpin(U) =T =\kay_D^\times$, with the diagram:
\begin{equation} \label{eq:torus2}
\xymatrix{
 1 \ar[r]  &\G_m \ar[r] \ar[d] &T=\kay_D^\times \ar[r] \ar[d] &\SO(U)=\kay_D^1\ar[r] \ar[d] &1
\cr
 1 \ar[r]  &\G_m \ar[r] &H\ar[r] &\SO(V) \ar[r] &1.
 \cr
}
\end{equation}

\begin{lemma}\label{lem:Embedding2} The map $T\rightarrow H$ in diagram (\ref{eq:torus2}) is induced by
$$
\iota_Z =(\iota_{z_1}, \iota_{z_2}):  \kay_D \rightarrow M_2(\Q) \times M_2(\Q).
 $$
 Here $\iota_{z_i}$ is the map defined in Lemma \ref{lem:Embedding1}.
\end{lemma}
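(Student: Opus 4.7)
To identify the embedding $T \to H$ in diagram (\ref{eq:torus2}) with $\iota_Z$, the plan is to verify directly that $\iota_Z(r) = (\iota_{z_1}(r), \iota_{z_2}(r))$ lies in $H$, stabilizes the plane $U$, and induces the map $r \mapsto r/\bar r$ on $\SO(U) = \kay_D^1$. Since this data uniquely characterizes the canonical embedding of $\GSpin(U) = T$ into $\GSpin(V) = H$, the result will follow. Note first that $\kay_{D_1} = \kay_{D_2} = \kay_D = \kay_{D_0}$ since $D = t^2 D_0$, so $r \in \kay_D$ may be fed into both $\iota_{z_1}$ and $\iota_{z_2}$.

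First, for $\iota_Z(r) \in H$: from $\iota_{z_i}(\sqrt{D_i}) = \kzxz{b_i}{(D_i - b_i^2)/(2a_i)}{2a_i}{-b_i}$ in the proof of Lemma \ref{lem:Embedding1}, that matrix has trace $0$ and determinant $-D_i$. Expanding $r = x + y\sqrt{D_0} \in \kay_D$ and using $\sqrt{D_i} = t_i\sqrt{D_0}$, a quick computation will give $\det \iota_{z_i}(r) = x^2 - D_0 y^2 = N_{\kay_D/\Q}(r)$, independent of $i$.

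Second, for the action on $U$: the key input will be the ``dual'' of (\ref{eq:iotaz}),
$$
(1, -z)\, \iota_z(r) = \bar r\, (1, -z),
$$
obtained by entrywise complex conjugation of (\ref{eq:iotaz}), using that $\iota_z(r) \in M_2(\Q)$ is real. Writing $w_\dd = \frac{1}{\dd}\binom{z_1}{1}(1, -z_2)$ and $\tw_\dd = \frac{1}{\dd}\binom{z_1}{1}(1, -\bar z_2)$, and combining (\ref{eq:iotazp}) for $\iota_{z_1}$ with the identities $(1, -z_2)\iota_{z_2}(r)^{-1} = \bar r^{-1}(1, -z_2)$ and $(1, -\bar z_2)\iota_{z_2}(r)^{-1} = r^{-1}(1, -\bar z_2)$, the rule $\iota_Z(r) X = \iota_{z_1}(r) X \iota_{z_2}(r)^{-1}$ will yield
$$
\iota_Z(r) \cdot w_\dd = (r/\bar r)\, w_\dd, \qquad \iota_Z(r) \cdot \tw_\dd = \tw_\dd
$$
in one line each. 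Taking complex conjugates and using that $\iota_Z(r)$ is real, this shows $U_\R = \R \Re w_\dd + \R \Im w_\dd$ is stabilized (with action $r/\bar r$ on $w_\dd$) and $U_\R^\perp$ is fixed pointwise, as required.

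The main, and essentially only, subtlety I anticipate is the justification of the ``dual'' form of (\ref{eq:iotaz}); once it is obtained by complex conjugation, the remaining work is mechanical linear algebra and matching with (\ref{eq:torus2}) under the identification $\SO(U) = \kay_D^1$.
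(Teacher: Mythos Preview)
Your proof is correct and takes a genuinely different, more direct route than the paper's. The paper characterizes $\GSpin(U)\subset H$ as the pointwise stabilizer of $U^\perp$, then expresses this in terms of an explicit rational basis $\tilde\alpha,\tilde\beta$ of $U^\perp$: the conditions become that $g_1$ commutes with $\tilde\gamma=\tilde\alpha\tilde\beta^{-1}=\iota_{z_1}(\sqrt{D_0})$, that $g_2$ commutes with $\tilde\delta=\tilde\beta^{-1}\tilde\alpha=\iota_{z_2}(\sqrt{D_0})$, and that $g_1\tilde\beta=\tilde\beta g_2$; solving these matrix equations yields exactly the pairs $(\iota_{z_1}(r),\iota_{z_2}(r))$. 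You instead exploit the rank-one factorizations $w_\dd=\dd^{-1}\binom{z_1}{1}(1,-z_2)$ and $\tw_\dd=\dd^{-1}\binom{z_1}{1}(1,-\bar z_2)$ together with the eigenvector identities (\ref{eq:iotazp}) and their conjugates to compute $\iota_Z(r)\cdot\tw_\dd=\tw_\dd$ and $\iota_Z(r)\cdot w_\dd=(r/\bar r)w_\dd$ in one line each. This is the same trick the paper later uses in (\ref{eq:Twid}), but the paper's proof of this lemma does not invoke it.

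What your approach buys is brevity and transparency: no explicit basis of $U^\perp$ is needed, and the action on $\SO(U)\cong\kay_D^1$ is read off immediately. What the paper's approach buys is an explicit \emph{characterization} of $\GSpin(U)$ inside $H$ (both inclusions), whereas you verify only that $\iota_Z(T)\subset\GSpin(U)$ with the correct induced maps; your final sentence invoking uniqueness is correct but could be sharpened by noting that $\iota_Z$ is injective (clear from Lemma~\ref{lem:Embedding1}) and both $T$ and $\GSpin(U)$ are $2$-dimensional tori, so $\iota_Z$ is an isomorphism onto $\GSpin(U)$.
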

\begin{proof}First notice that
$$
\tilde\gamma= \tilde\alpha (\tilde\beta)^{-1} =\iota_{z_1}(\sqrt{D_0})
$$
is independent of $z_2$, and that
$$
\tilde\delta = (\tilde\beta)^{-1} \tilde\alpha = \iota_{z_2}(\sqrt{D_0})
$$
is independent of $z_1$. Now
\begin{align*}
\Gspin(U) &= \{g=(g_1, g_2) \in H:  g \hbox{ acts on  } U^\perp \hbox{ trivially}\}
\\
 &= \{ (g_1,g_2) \in H:\, g_1 \tilde\gamma =\tilde\gamma g_1,  g_2 \tilde\delta =\tilde\delta g_2,  g_1 \tilde\beta = \tilde\beta  g_2\}
 \\
 &= \{ (g_1, g_2) \in  \GL_2 \times \GL_2:\,  g_2 =\iota_{z_2}(r),  g_1 =\iota_{z_1}(r),  \hbox{ for some } r \in  \kay_D^\times\}.
\end{align*}
Here the last equality comes from the identities
$$
(a_2, \frac{-b_2+\sqrt{D_2}}2) =(a_1, \frac{-b_1+\sqrt{D_1}}2) \frac{2a_2}{t_1} \tilde\beta,
\quad \hbox{ and }  \iota_{z_1} (r) = \tilde\beta \iota_{z_2}(r) \tilde\beta^{-1}.
$$
So by Lemma~\ref{lem:Embedding1} we have the desired map
$$
\iota_Z:   T \rightarrow H, \quad \iota_Z(r) = (\iota_{z_1}(r), \iota_{z_2}(r)).
$$
\end{proof}


\subsection{Lattices}
 \label{subsec:lattice}
To understand the lattices,
 it is better to have some restrictions on $z_i$, which are harmless in this paper because of the following lemma, whose proof is left to the reader.

\begin{lemma} \label{lem:smallCM} Suppose that $D_i$ satisfy \eqref{eqd}. For ideal classes $\Ac_i$ of $\OO_{D_i}$, $i=1, 2$, there are ideals $\mathfrak a_i=[a_i, t_i \frac{b+\sqrt{D_0}}2]$ representing $\Ac_i$ with associated CM points $z_i= t_i\frac{b+\sqrt{D_0}}{2  a_i}$  such that $a_i >0$,
  \begin{equation}
    \label{eq:abccond}
    \begin{split}
      \gcd(a_1, a_2)&=1,~ \gcd(a_{i}, 6b D) =1,~ b^2 \equiv D_0 \bmod (4 a_{i}),~
      48 \mid \gcd(a_i - t_i, b - 1).
    \end{split}
  \end{equation}
\end{lemma}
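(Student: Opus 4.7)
The plan is to use Chebotarev's density theorem to find prime-norm representatives of each class $\Ac_i$ satisfying the required congruences modulo $48$, and then apply the Chinese Remainder Theorem to harmonize the ``middle coefficient'' $b$ across the two ideals.

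\textit{Step 1 (prime representatives).} In each class $\Ac_i \in \Cl(D_i)$, I would produce a prime ideal $\mathfrak p_i$ whose norm $p_i$ is a rational prime satisfying $p_i \equiv t_i \pmod{48}$, $\gcd(p_i, 6D) = 1$, and $p_1 \neq p_2$. The condition ``$\mathfrak p_i$ lies in $\Ac_i$'' is a Chebotarev condition in $H_{D_i}/\Q$, while ``$p_i \equiv t_i \pmod{48}$'' is a Chebotarev condition in $\Q(\zeta_{48})/\Q$, so the two can be prescribed simultaneously provided $H_{D_i}$ and $\Q(\zeta_{48})$ are linearly disjoint over $\Q$. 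This disjointness follows from \eqref{eqd}: every prime ramifying in $H_{D_i}/\Q$ divides $D_i = t_i^2 D_0$, and \eqref{eqd} forces $t_i$ and $D_0$ to be coprime to $6$, so all ramification in $H_{D_i}/\Q$ lies outside $\{2,3\}$; on the other hand, every nontrivial subfield of $\Q(\zeta_{48})$ has conductor divisible by $2$ or $3$. Setting $a_i := p_i$ then gives $\gcd(a_1,a_2)=1$, $\gcd(a_i, 6D)=1$, and $48 \mid a_i - t_i$.

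\textit{Step 2 (common $b$).} Since $\gcd(p_i, t_i D_0) = 1$, the ideal $\mathfrak p_i$ has the form $[a_i, t_i \frac{b_i + \sqrt{D_0}}{2}]$ for some odd integer $b_i$ with $b_i^2 \equiv D_0 \pmod{4 a_i}$, and $b_i$ is uniquely determined modulo $2 a_i$ once we fix one of $\mathfrak p_i, \overline{\mathfrak p_i}$. The moduli $2a_1$, $2a_2$, $48$ have pairwise $\gcd$ equal to $2$, so CRT applies provided the three prescribed residues are all odd; this is the case because $b_i$ is odd for $i=1,2$ and we impose $b \equiv 1 \pmod{48}$. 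Hence there exists $b \in \Z$ with $b \equiv b_i \pmod{2 a_i}$ for $i = 1, 2$ and $b \equiv 1 \pmod{48}$. A one-line computation shows $b \equiv b_i \pmod{2 a_i}$ implies $b^2 \equiv b_i^2 \equiv D_0 \pmod{4 a_i}$, so $[a_i, t_i \frac{b + \sqrt{D_0}}{2}] = \mathfrak p_i$ still represents $\Ac_i$. Finally $\gcd(a_i, b) = 1$ follows from $b^2 \equiv D_0 \pmod{a_i}$ together with $\gcd(a_i, D_0) = 1$.

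\textit{Main obstacle.} The crux is the linear disjointness in Step 1. Although the argument via ramification is short in principle, it requires handling the non-maximal order case with care, since the conductor of the ring class field $H_{D_i}$ involves ramification at primes dividing $t_i$ in addition to those dividing $D_0$; the hypothesis \eqref{eqd} is used precisely to ensure none of these is $2$ or $3$. Once this disjointness is in place, the rest of the argument is routine class field theory followed by CRT and a direct verification of the resulting ideal identities.
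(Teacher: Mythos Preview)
The paper leaves the proof of this lemma to the reader, so there is no argument in the paper to compare against. Your approach --- Chebotarev density in the compositum $H_{D_i}\cdot\Q(\zeta_{48})$ to produce prime-norm representatives $a_i = p_i$ with $p_i\equiv t_i\pmod{48}$, followed by CRT to synchronize the middle coefficients --- is correct and is the natural way to prove such a statement.

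The linear-disjointness step is handled correctly: condition \eqref{eqd} forces $\gcd(D_i,6)=1$, so every prime ramified in $H_{D_i}/\Q$ divides $D_i$ and hence is prime to $6$, while every nontrivial subfield of $\Q(\zeta_{48})$ ramifies at $2$ or $3$; since both $H_{D_i}$ and $\Q(\zeta_{48})$ are Galois over $\Q$, trivial intersection gives linear disjointness and the product Galois group needed for Chebotarev. The CRT in Step~2 goes through because the three moduli $2a_1,\,2a_2,\,48$ have pairwise $\gcd$ equal to $2$ and all three residues are odd, and the passage from $b\equiv b_i\pmod{2a_i}$ to $b^2\equiv D_0\pmod{4a_i}$ uses exactly that $b+b_i$ is even. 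One implicit point you might spell out: writing $\mathfrak p_i=[a_i,\,t_i\tfrac{b_i+\sqrt{D_0}}{2}]$ presupposes that the standard middle coefficient $B_i$ (with $B_i^2\equiv D_i\pmod{4a_i}$) can be taken divisible by $t_i$; this is immediate from $\gcd(t_i,2a_i)=1$ and the freedom in $B_i$ modulo $2a_i$, after which $b_i:=-B_i/t_i$ satisfies $b_i^2\equiv D_0\pmod{4a_i}$.
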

\begin{remark}
  \label{rmk:zicond}
For any fixed $n \in \N$ and odd $r_i \in \Z$, we can find $a_i$ such that
$2^n \mid a_i - r_i$ for $i = 1, 2$.
\end{remark}

\begin{lemma}
  \label{lemma:ziinv}
  Let $D_j$ be discriminants satisfying \eqref{eqd} and $z_i$ CM points chosen as in Lemma \ref{lem:smallCM}. Then
  \begin{equation}
    \label{eq:ziinv}
    \frac{\ff_2(z_1)}{\ff_2(z_2)} =     \frac{f(\af_1)}{f(\af_2)}.
  \end{equation}
  with $f(\af_i)$ the class invariant defined in \eqref{eq:classinv}.
\end{lemma}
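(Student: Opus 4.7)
The plan is to apply case~3 of the class invariant formula~\eqref{eq:classinv} to both $\af_1$ and $\af_2$ and verify that the resulting prefactors coincide, so that the ratio of $\ff_2$-values equals the ratio of class invariants.

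First I would rewrite $z_i = (b_i + \sqrt{D_i})/(2a_i)$ with $b_i := t_i b$, so $\af_i = [a_i,(b_i+\sqrt{D_i})/2]$ has third coefficient
$$
c_i := \frac{b_i^2 - D_i}{4 a_i} = t_i^2 \cdot \frac{b^2 - D_0}{4 a_i},
$$
an integer thanks to $b^2 \equiv D_0 \pmod{4a_i}$. Next I would check that $a_i$ is odd (immediate from $\gcd(a_i, 6) = 1$) and that $c_i$ is even: since $b \equiv 1 \pmod{48}$ and $D_0 \equiv 1 \pmod 8$, one has $b^2 - D_0 \equiv 0 \pmod 8$, which forces the integer $m := (b^2 - D_0)/(4 a_i)$ to be even (as $a_i$ is odd), whence $c_i = t_i^2 m \in 2\Z$. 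Thus both $\af_1$ and $\af_2$ fall under case~3 of \eqref{eq:classinv}, giving
$$
f(\af_i) = \varepsilon_{D_i}\, \zeta_{48}^{E_i}\, \ff_2(z_i), \qquad E_i := b_i(a_i - c_i + a_i^2 c_i).
$$

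The lemma therefore reduces to showing $\varepsilon_{D_1} \zeta_{48}^{E_1} = \varepsilon_{D_2} \zeta_{48}^{E_2}$. Writing $\varepsilon_{D_i} = (-1)^{(D_i-1)/8} = \zeta_{48}^{3(D_i-1)}$, this becomes the congruence $3 D_1 + E_1 \equiv 3 D_2 + E_2 \pmod{48}$. I would exploit $b \equiv 1$ and $a_i \equiv t_i \pmod{48}$ to reduce
$$
E_i = b_i a_i + b_i c_i(a_i^2 - 1) \equiv t_i^2 \pmod{48},
$$
since $24 \mid a_i^2 - 1$ (standard for $\gcd(a_i, 6) = 1$) and $c_i \in 2\Z$ kill the second summand modulo~$48$. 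Combined with $3 D_i = 3 D_0 t_i^2$, the required congruence becomes
$$
(3 D_0 + 1)(t_1^2 - t_2^2) \equiv 0 \pmod{48}.
$$

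I would finish by checking this prime by prime: at $2$, $D_0 \equiv 1 \pmod 8$ forces $v_2(3 D_0 + 1) \geq 2$, and $t_1, t_2$ both odd yields $v_2(t_1^2 - t_2^2) = v_2(t_1 - t_2) + v_2(t_1 + t_2) \geq 3$; at $3$, the fact $3 \nmid t_i$ (inherited from $3 \nmid D_i$ in \eqref{eqd}) gives $t_1^2 \equiv t_2^2 \equiv 1 \pmod 3$. The argument carries no conceptual obstacle; the main difficulty is the careful bookkeeping needed to single out the correct case of \eqref{eq:classinv} and to track the interplay between $\varepsilon_{D_i}$ and the exponent $b(a - c + a^2 c)$ modulo $48$ using exactly the congruences supplied by Lemma~\ref{lem:smallCM}.
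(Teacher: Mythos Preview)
Your proof is correct and follows essentially the same route as the paper: both verify that case~3 of \eqref{eq:classinv} applies and then reduce to checking that $3(D_i-1) + b_i(a_i - c_i + a_i^2 c_i) \bmod 48$ is independent of $i$, using the congruences $a_i \equiv t_i$, $b \equiv 1 \pmod{48}$ and $c_i \in 2\Z$ from Lemma~\ref{lem:smallCM}. The only difference is cosmetic: the paper reaches the $i$-independent value $3D_0 + b - 3$ via the identity $2t_i^2 \equiv 2 \pmod{48}$, whereas you kill the cross term $b_i c_i(a_i^2-1)$ earlier via $24 \mid a_i^2 - 1$ and then check $(3D_0+1)(t_1^2 - t_2^2) \equiv 0 \pmod{48}$ prime by prime.
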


\begin{proof}
  This is equivalent to checking that
  $3(D_i - 1)+ b_i (a_i - c_i + a_i^2 c_i) \bmod{48}$ is independent of $i = 1, 2$, where $b_i := bt_i, c_i := \frac{t_i^2}{a_i} c, c:= \frac{b^2 - D_0}{4}$. By the choice of $z_i$, we have
  $$
a_i \equiv t_i \bmod{48},~ c_i \equiv t_i c \bmod{48}.
$$
Furthermore, $2 \mid D_0 + b$ and $2 \mid c$.
Substituting these in and using $c \in 2\Z,~ 2t_i^2 \equiv 2 \bmod{48}$ gives us
\begin{align*}
  3(D_i - 1) &+ b_i (a_i - c_i + a_i^2 c_i)
  \equiv 3(D_0 t_i^2 - 1) +  bt_i^2 (1 - c + t_i^2 c) \\
  & \equiv \frac{3D_0 + b}{2} 2t_i^2 - 3 -b \frac{c}{2} t_i^2 (2 - 2t_i^2) \equiv 3D_0 + b - 3   \bmod{48},
\end{align*}
which is independent of $i = 1,2$.
\end{proof}
  From now on, we will suppose that the $z_i$ satisfy  the conditions in  Lemma \ref{lem:smallCM}.
Let
$$
a:=a_1a_2, \quad c := \frac{b^2 - D_0}{4a} \in 16\Z.
$$
The following integral ideals will be important for us
\begin{equation}
  \label{eq:af}
  \begin{split}
    \af_i &:= a_i (\Z + \Z z_i) \subset \OO_{D_i},~
    \bfrak_i := a_i (\Z + 2\Z \overline{z_i}) = \overline{\af_i} \cap \OO_{4D_i} \subset \OO_{4D_i}, \\
    \mathfrak a_0 &= \Z a + \Z\frac{-b+\sqrt{D_0}}2 = \overline{\af_1}\overline{\af_2} \subset \OO_{D_0},\\
    \mathfrak a&= \Z a + \Z( -b +\sqrt{D_0} ) = \af_0 \cap \OO_{4D_0} \subset \OO_{4D_0}.
  \end{split}
\end{equation}
In addition, we choose $\tilde{b} \in \Z$ such that
\begin{equation}
  \label{eq:tb}
  \begin{split}
      \tilde{b} + (-1)^i b \equiv 0 &\bmod 2a_i,~ i = 1, 2, \\
b \equiv \tb &\bmod 4D.
  \end{split}
\end{equation}
and define the following ideals 
\begin{equation}
  \label{eq:tildea}
  \begin{split}
      \tilde{\af}_0 &:= \Z a+ \Z \frac{-\tilde{b} + \sqrt{D_0}}{2} = \overline{\af_1}\af_2 \subset \OO_{D_0},\\
    \tilde{\af} &:= \Z a +\Z( -\tilde{b} + \sqrt{D_0}) = \taf_0 \cap \OO_{4D_0} \subset \OO_{4D_0}.
  \end{split}
\end{equation}
Notice that $\mathfrak a_0$ and $\taf_0$ are integral ideals of $\OO_{D_0}$, while $\mathfrak a$ and $\taf$ are integral ideals of  $\OO_{4D_0}$.
Furthermore,
\begin{equation}
  \label{eq:norma}
a = [\OO_{D_0}: \af_0]
= [\OO_{D_0}: \taf_0]
= [\OO_{4D_0}: \af]
= [\OO_{4D_0}: \taf].
\end{equation}
Using $\tilde\af, \af \subset \Zb[\sqrt{D_0}]$, we can make the following canonical identification
\begin{equation}
    \label{eq:Aid}
    \tilde\af'/\tilde\af = \frac1{2\dd \sqrt{D}} \tilde\af/\tilde\af
    \cong
    A
    \cong     \frac1{2 \dd \sqrt{D}} \af/\af = \af'/\af,~
    A :=     \frac1{2\dd \sqrt{D}} \Zb[\sqrt{D_0}] / \Zb[\sqrt{D_0}].
\end{equation}
Also, denote $A_p := A \otimes \Zb_p$.

Now, we can view $t_{}\af_i/t_i$ and $t\af, t\taf$ as invertible $\OO_D$ ideals.
Let $\Ac_{i} \in \Cl(D_i)$ denote their classes.
The following lemma will be helpful for us later.
\begin{lemma}
  \label{lemma:const}
  In the notation above,
  there exist $\tilde{\alpha} \in \taf$ such that $\Nm(\tilde{\alpha}) = a/t$ if and only if the classes $\Ac_i$ are the same, i.e.\ $D_1 = D_2 = D$ and $\Ac_1 = \Ac_2 \in \Cl(D)$.
  If this happens, then $t = 1$.
\end{lemma}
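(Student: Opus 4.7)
The plan is to exploit the fact that $\taf_0 = \overline{\af_1}\af_2$ is an invertible integral $\OO_{D_0}$-ideal of norm $a$. For the forward direction, I would start from $\tilde\alpha \in \taf$ with $\Nm(\tilde\alpha) = a/t$. Since $\taf \subset \taf_0$ by construction, the principal $\OO_{D_0}$-ideal $(\tilde\alpha)$ lies inside $\taf_0$, so $a = \Nm(\taf_0)$ divides $|\Nm_{\kay/\Q}(\tilde\alpha)| = a/t$, which forces $t = 1$, and therefore $D_1 = D_2 = D_0 = D$. Equality of norms then forces $(\tilde\alpha) = \taf_0$, so $\overline{\af_1}\af_2$ is principal, which is equivalent to $[\af_1] = [\af_2]$ in $\Cl(D)$; hence $\Ac_1 = \Ac_2$.

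For the converse, assume $t = 1$ and $\Ac_1 = \Ac_2$. Then $\taf_0 = \gamma\OO_{D_0}$ for some $\gamma \in \OO_{D_0}$ with $|\Nm(\gamma)| = a$, and the goal is to set $\tilde\alpha = \gamma$. The main obstacle is to verify that $\gamma$ can be chosen inside the suborder $\OO_{4D_0} = \Z[\sqrt{D_0}]$, so that $\gamma \in \taf_0 \cap \OO_{4D_0} = \taf$. Note that the unit group $\OO_{D_0}^\times = \{\pm 1\}$ (since $D_0 \le -7$ under \eqref{eqd}) provides no flexibility to adjust the parity of the $\frac{1+\sqrt{D_0}}{2}$-coefficient of $\gamma$, so this inclusion must be forced by the arithmetic of the problem rather than by a choice of unit representative.

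I would resolve this with a short parity argument on the norm equation. Writing $\gamma = \mu + \nu \cdot \frac{1+\sqrt{D_0}}{2}$ with $\mu, \nu \in \Z$,
\[
  a \;=\; \Nm(\gamma) \;=\; \mu^2 + \mu\nu + \tfrac{1-D_0}{4}\,\nu^2.
\]
The hypothesis $D_0 \equiv 1 \pmod 8$ makes $(1-D_0)/4$ even, and the normalization $48 \mid a_i - t_i = a_i - 1$ from Lemma \ref{lem:smallCM} makes $a = a_1 a_2$ odd. If $\nu$ were odd, the right-hand side would reduce modulo $2$ to $\mu(\mu + 1) \equiv 0 \pmod 2$, contradicting the odd parity of $a$. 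Hence $\nu$ is even, $\gamma \in \Z + \Z\sqrt{D_0} = \OO_{4D_0}$, and $\tilde\alpha = \gamma$ does the job.
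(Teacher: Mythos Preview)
Your proof is correct and follows essentially the same approach as the paper's. The only differences are organizational: you unify the forward direction into a single norm-divisibility argument via $\taf \subset \taf_0$ (the paper instead splits into the cases $t=1$ and $t>1$ and uses $[\OO_{4D_0}:\taf] \le [\OO_{4D_0}:\tilde\alpha\OO_{4D_0}]$ for the latter), and you spell out the parity argument for $\gamma \in \OO_{4D_0}$ in detail, whereas the paper compresses it to the single remark that $D_0 \equiv 1 \bmod 8$ and $a \equiv 1 \bmod 16$ force $\tilde\alpha \in \OO_{4D_0}$.
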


\begin{proof}
  If $t = 1$, then $\Ac_1 = \Ac_2$ if and only if $\taf_0 = \overline{\af_1} \af_2 = \tilde{\alpha} \OO_{D_0}$.
  As $D_0 \equiv 1 \bmod{8}$ and $\Nm(\taf_0) = a \equiv 1 \bmod{16}$, we must have $\tilde{\alpha} \in \OO_{4D_0} \cap \taf_0 = \taf$.

  If $t > 1$, then $D_1 \neq D_2$ and $\Ac_1\neq \Ac_2$.
  On the other hand, we have
  $$
a = \Nm(\taf) = [\OO_{4D_0}: \taf] \le [\OO_{4D_0}: \tilde{\alpha} \OO_{4D_0}] = \Nm(\tilde{\alpha})
$$
for any $\tilde{\alpha} \in \taf$, so there cannot be any $\tilde{\alpha} \in \taf$ with norm $a/t < a$.
\end{proof}
We can now embed the ideals $\tilde{\af}_0$ and $\tilde{\af}$ into the lattice $M_2(\Z)$ as follows.

\begin{proposition} \label{prop:Lattice} \quad Let $L_0 =M_2(\Z) \subset V$.
For $\dd>0$ a positive integer, denote
\begin{equation}
  \label{eq:Ld}
  L = L_\dd := \left\{\lambda \in L_0:
\lambda \equiv \pmat{*}{*}{0}{*} \bmod{2}
\right\}
\end{equation}
with the quadratic form $Q_\dd := \dd \det$.
Let $\mathcal N = \Nc_\dd :=L_\dd\cap U$ and $\mathcal P = \Pc_\dd :=L_\dd \cap U^\perp$.  Then
\begin{equation}
  \label{eq:latticeimage}
  \begin{split}
i_\dd^+ (\tilde{\mathfrak a}_0 ) &=L_0\cap U^\perp,  \quad i_\dd^+ (\tilde{\mathfrak a}) =\mathcal P, \\
i_\dd^-(\mathfrak a_0 ) &=L_0\cap U,  \quad  i_\dd^- (\mathfrak a) =\mathcal N.
  \end{split}
\end{equation}
The dual lattices of $\tilde{\af}$ and $\af$ are $\frac{1}{2 \dd t \sqrt{D_0}} \tilde{\af}$ and $\frac{1}{2 \dd t \sqrt{D_0}} {\af}$ respectively.
Furthermore, we have
\begin{equation}
  \label{eq:latcond}
  \begin{split}
    i_\dd(\tilde{\lambda}, \lambda) \in L_\dd' &\Leftrightarrow \pmat{1}{1}{\overline{z_2}}{z_2} \binom{\tilde{\lambda}}{\lambda} \in \frac{1}{\dd t_1} \overline{\af_1} \times \frac{1}{2 \dd t_1} \bfrak_1,\\
    i_\dd(\tilde{\lambda}, \lambda) \in L_\dd &\Leftrightarrow \pmat{1}{1}{\overline{z_2}}{z_2} \binom{\tilde{\lambda}}{\lambda} \in \frac{1}{t_1} \bfrak_1 \times \frac{1}{ t_1} \overline{\af_1}.
  \end{split}
\end{equation}
This implies that for any $(\tilde{\lambda}, \lambda) \in \frac{1}{2 \dd t \sqrt{D_0}} \tilde{\af} \times \frac{1}{2 \dd t \sqrt{D_0}} \af$ and $\mu_0 \in L_\dd'$,
$$
i_\dd(\tilde{\lambda}, \lambda) \equiv \mu_0 \bmod L_{\dd} \Leftrightarrow
\pmat{1}{1}{\overline{z_2}}{z_2} \binom{\tilde{\lambda} - \frac{a}{t \sqrt{D_0}} (\mu_0, \overline{\tw_\dd})_\dd}{\lambda + \frac{a}{t \sqrt{D_0}}  ( \mu_0, \overline{w_\dd})_\dd} \in \frac{1}{ t_1} \bfrak_1 \times \frac{1}{ t_1} \overline{\af_1}.
$$
\end{proposition}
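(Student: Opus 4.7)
The plan is to derive all assertions from a single explicit matrix formula for $i_\dd(\tilde\lambda,\lambda)$. Factor $w_\dd=\tfrac{1}{\dd}\binom{z_1}{1}(1,-z_2)$ and $\tw_\dd=\tfrac{1}{\dd}\binom{z_1}{1}(1,-\overline{z_2})$ so that
$$
\tilde\lambda\tw_\dd+\lambda w_\dd=\tfrac{1}{\dd}\binom{z_1}{1}(\mu,-\nu),\qquad \mu:=\tilde\lambda+\lambda,\ \nu:=\tilde\lambda\overline{z_2}+\lambda z_2.
$$
Subtracting the Galois conjugate and dividing by $-\sqrt{D_0}$, then writing $\mu=u+v\sqrt{D_0}$, $\nu=u'+v'\sqrt{D_0}$ and using $z_1=t_1(b+\sqrt{D_0})/(2a_1)$, the four entries simplify to
$$
i_\dd(\tilde\lambda,\lambda)=\begin{pmatrix} -t_1(u+bv)/a_1 & t_1(u'+bv')/a_1 \\ -2v & 2v' \end{pmatrix}.
$$

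For the lattice identifications, specialize $\tilde\lambda=0$: the integrality of the four entries combined with $(a_i,t_i)=1$ and $(a_1,a_2)=1$ from Lemma~\ref{lem:smallCM} reduces to $v\in\tfrac12\Z$ and $a\mid u+bv$, while the $(1,2)$-entry automatically becomes integral by the identity $ub+v(b^2+D_0)/2=a(bm-\ell c)$, which uses $b^2-D_0=4ac$. A direct check shows this condition on $(u,v)$ characterizes $\lambda\in\af_0$, and the stronger constraint $v\in\Z$ (the $(2,1)$-entry being even) gives $\lambda\in\af$, i.e.\ $i^-_\dd(\af)=\mathcal N$. Specializing $\lambda=0$ produces the conditions $a_1\mid\tilde u+b\tilde v$ and $a_2\mid\tilde u-b\tilde v$, which by $\tb\equiv b\bmod 2a_1$ and $\tb\equiv -b\bmod 2a_2$ from \eqref{eq:tb} merge into $a\mid\tilde u+\tb\tilde v$, yielding $\tilde\lambda\in\tilde\af_0$ (resp.\ $\tilde\af$). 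For the dual-lattice formula $\tilde\af'=\frac{1}{2\dd t\sqrt{D_0}}\tilde\af$, pair $x=\alpha/(2\dd t\sqrt{D_0})$ against $m\in\tilde\af$ via the pullback form $\tfrac{\dd t}{a}\Nm$ through $i_\dd^+$: the resulting condition on $\alpha$ matches the membership condition for $\tilde\af$ generator by generator, and analogously for $\af$.

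For the membership and congruence conditions, read off the matrix formula: $i_\dd(\tilde\lambda,\lambda)\in L_\dd$ translates to $v\in\Z,\ v'\in\tfrac12\Z,\ a_1\mid u+bv,\ a_1\mid u'+bv'$. Using
$$
\tfrac{1}{t_1}\bfrak_1=\tfrac{a_1}{t_1}\Z+\Z(b-\sqrt{D_0}),\qquad \tfrac{1}{t_1}\overline{\af_1}=\tfrac{a_1}{t_1}\Z+\Z\tfrac{b-\sqrt{D_0}}{2},
$$
these conditions are precisely $\mu\in\tfrac{1}{t_1}\bfrak_1$ and $\nu\in\tfrac{1}{t_1}\overline{\af_1}$. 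The $L_\dd'$ case follows the same pattern with denominators $\dd$ and $2\dd$ inserted via $L_\dd'=\{g:g_{11},g_{21},g_{22}\in\tfrac{1}{\dd}\Z,\ g_{12}\in\tfrac{1}{2\dd}\Z\}$, giving $\mu\in\tfrac{1}{\dd t_1}\overline{\af_1}$ and $\nu\in\tfrac{1}{2\dd t_1}\bfrak_1$. The final congruence claim is purely formal: for $(\tilde\lambda',\lambda'):=(\tilde\lambda,\lambda)-i_\dd^{-1}(\mu_0)$, linearity gives $i_\dd(\tilde\lambda',\lambda')=i_\dd(\tilde\lambda,\lambda)-\mu_0$, so $i_\dd(\tilde\lambda,\lambda)\equiv\mu_0\bmod L_\dd$ iff $i_\dd(\tilde\lambda',\lambda')\in L_\dd$, and \eqref{eq:iinverse} identifies the two components of $i_\dd^{-1}(\mu_0)$ as the quantities subtracted inside the matrix product.

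The main obstacle is bookkeeping. The parameters $a_1,a_2,t_1,t_2,b,\tb,c,\dd$ interact through multiple coprimality and congruence conditions, and each integrality simplification must invoke the correct subset of Lemma~\ref{lem:smallCM} and \eqref{eq:tb}: stripping off $t_1$-factors from the $(1,1),(1,2)$ entries uses $(a_1,t_1)=1$; the $(1,2)$ automatic-integrality calculation uses $b^2\equiv D_0\bmod 4a_i$; and the $i_\dd^+$ lattice matching uses the opposite signs of $\tb\pm b\bmod 2a_i$. One must also track the deliberate asymmetry between $z_1$ (which drives the left-column structure) and $z_2$ (absorbed into $\mu,\nu$) so that the final conditions correctly involve $\af_1,\bfrak_1$ rather than $\af_2,\bfrak_2$.
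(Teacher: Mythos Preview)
Your proof is correct and takes a genuinely different route from the paper's. The paper works entirely through the bilinear pairing: to show $i^-_\dd(\lambda)\in L_\dd$ it checks $(i^-_\dd(\lambda),\mu)_\dd\in\Z$ for all $\mu\in L'_\dd$, rewrites this as a trace condition $\tr(\lambda\overline\alpha)\in\Z$ for all $\alpha\in\frac{1}{2a\sqrt{D_0}}\af$, and thereby identifies the preimage as $\af$; the characterization \eqref{eq:latcond} is obtained the same way by expanding $(i_\dd(\tilde\lambda,\lambda),\mu)_\dd$ into two trace terms indexed by auxiliary elements $\gamma,\delta$. You instead compute the four matrix entries of $i_\dd(\tilde\lambda,\lambda)$ explicitly and read off the integrality conditions directly, then match them generator-by-generator against the $\Z$-bases of $\af_0,\af,\tilde\af_0,\tilde\af,\tfrac{1}{t_1}\bfrak_1,\tfrac{1}{t_1}\overline{\af_1}$.

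Your approach is more elementary and makes the role of each hypothesis from Lemma~\ref{lem:smallCM} and \eqref{eq:tb} completely transparent---one sees exactly which entry forces which divisibility, and why the $(1,2)$-entry comes for free once the others are handled. The paper's pairing approach is more uniform (each lattice identification is a single trace calculation rather than four separate entry checks) and somewhat more portable to other signatures, but at the cost of hiding the combinatorics inside the trace. For this particular proposition the two methods are of comparable length; yours arguably gives a clearer picture of why $\af_1,\bfrak_1$ (and not $\af_2,\bfrak_2$) appear in \eqref{eq:latcond}, since the asymmetry is baked into which column of $\binom{z_1}{1}$ survives the $\sqrt{D_0}$-subtraction.
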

\begin{remark}
Since
$$\mathcal P \oplus \mathcal N \subset L \subset L' \subset \mathcal P' \oplus \mathcal N',
$$
we have $L' /(\mathcal P + \mathcal N)  \subset \mathcal P'/\mathcal P \oplus \mathcal N'/\mathcal N$ and the natural  projection $L' /(\mathcal P + \mathcal N) \rightarrow L' /L $.
\end{remark}
\begin{proof}
To characterize the preimage of $L_\dd \cap U$ under $i^-_\dd$, we can use the fact that $L_\dd$ and $L'_\dd$ are dual lattices, and apply the discussion in section \ref{subsec:smallCM} to see that
\begin{align*}
  i^-_\dd(\lambda) &= \frac{1}{\sqrt{D_0}}(\lambda w - \overline{\lambda w}) \in L_\dd \Leftrightarrow (  i^-_\dd(\lambda) , \mu)_\dd \in \Z \text{ for all }\mu
\in L'_\dd \\
& \Leftrightarrow \frac{1}{\sqrt{D_0}} \lp \lambda w - \overline{\lambda w}, \frac{1}{2\dd} \pmat{\mu_1}{\mu_2}{\mu_3}{\mu_4} \rp_\dd \in \Z \text{ for all } \mu_1, \mu_3, \mu_4 \in 2\Z, \mu_2 \in \Z\\
& \Leftrightarrow  \tr \lp  \frac{ a\lambda (-\mu_1 z_2 + \mu_4 z_1 - \mu_2 + \mu_3 z_1 z_2) }{2a\sqrt{D_0}} \rp \in \Z \text{ for all } \mu_1, \mu_3, \mu_4 \in 2\Z, \mu_2 \in \Z\\
& \Leftrightarrow
 \tr \lp  \lambda \overline{\alpha} \rp \in \Z \text{ for all } \alpha  \in \frac{1}{2a \sqrt{D_0}} \af
 \Leftrightarrow \lambda \in \af.
\end{align*}
The same argument shows the other three equations in \eqref{eq:latticeimage}.

For \eqref{eq:latcond}, we argue similarly, and notice that for $\mu = \smat{\mu_1}{\mu_2}{\mu_3}{\mu_4} \in V$,
\begin{align*}
  (i_\dd(\tilde{\lambda}, \lambda), \mu)_\dd
  &=
\dd
\tr
\lp
 \frac{(\tilde{\lambda} + \lambda) (\mu_4 z_1 - \mu_2) + (\tilde{\lambda} \overline{z_2} + \lambda z_2)(\mu_3 z_1 - \mu_1) }{\sqrt{D_0}}
    \rp\\
  &=
    \tr
\lp
(\tilde{\lambda} + \lambda) \gamma
\rp +
\tr
\lp
(\tilde{\lambda} \overline{z_2} + \lambda z_2) \delta \rp,
\end{align*}
with $\gamma = \dd \frac{\mu_4 z_1 - \mu_2}{\sqrt{D_0}}, \delta = \dd \frac{\mu_3 z_1 - \mu_1}{\sqrt{D_0}}$.
So for $\mu \in L_\dd$, we have $\mu_1, \mu_2, \mu_4 \in \Z$ and $\mu_3 \in 2\Z$. The condition $i_\dd(\tilde{\lambda}, \lambda) \in L_\dd'$ is then equivalent to
$$
\tr
\lp
(\tilde{\lambda} + \lambda) \gamma
\rp \in \Z,~
\tr
\lp
(\tilde{\lambda} \overline{z_2} + \lambda z_2) \delta \rp \in \Z
$$
for all $\gamma \in \frac{\dd}{\sqrt{D_0}}\Z[z_1]$
and $\delta \in \frac{\dd}{\sqrt{D_0}}\Z[2z_1]$,
which is equivalent to $\tilde{\lambda} + \lambda \in \frac{1}{\dd t_1} \overline{\af_1}$ and
$\tilde{\lambda} \overline{z_2} + \lambda z_2 \in \frac{1}{2\dd t_1} \bfrak_1$.
Similarly, $\mu \in L_\dd'$ is equivalent to $\mu_1, \mu_3,  \mu_4 \in \frac{1}{\dd} \Z$ and $\mu_2 \in \frac{1}{2\dd} \Z$, which means $\gamma \in \frac{1}{2\sqrt{D_0}}\Z[2z_1]$ and $\delta \in \frac{1}{\sqrt{D_0}} \Z[z_1]$.
Therefore, $i_\dd(\tilde{\lambda}, \lambda) \in L_\dd$ if and only if $\tilde{\lambda} + \lambda \in \frac{1}{t_1}\bfrak_1$ and $\tilde{\lambda} \overline{z_2} + \lambda z_2 \in \frac{2}{t_1} \overline{\af_1}$.
The last claim now follows from \eqref{eq:iinverse}.
\end{proof}

\subsection{Level} Now  we assume $\dd|24$. Let $\Gamma^\dd$ be the subgroup of $\Gamma_0(2)$ generated by $\Gamma_0(2)^{\mathrm{der}}$, $T^\dd$, $S^2$, and $TB$, where
$$
T =\kzxz {1} {1}{0} {1},  \quad S =\kzxz {0} {-1} {1} {0} ,~ \text{ and }  B= \kzxz {1} {0} {-2} {1}.
$$
This was called $\Gamma_{\chi, \dd}$ in \cite{LY}, since $\Gamma^\dd = \ker(\chi^{24/\dd})$ with $\chi: \Gamma_0(2) \to \C^\times$ is the character defined by
\begin{equation}
  \label{eq:chi}
\ff_2(\gamma z) = \chi(\gamma) \ff_2(z),~ \gamma \in \Gamma_0(2).
\end{equation}
In particular,
\begin{equation}
  \label{eq:G24}
\Gamma^1=\Gamma_0(2), \hbox{ and }   \Gamma^{24} = \{\gamma \in \Gamma_0(2): \ff_2(\gamma z) = \ff_2(z) \text{ for all }z \in \H\}.
\end{equation}
$\Gamma^\dd$ is a normal subgroup of $\Gamma_0(2)$ of index $\dd$.
It is easy to check that $TB \in \Gamma^{24}$,since
\begin{equation}
  \label{eq:chiTB}
  \chi(T) = \chi(B)^{-1} = \zeta_{24}.
\end{equation}
Since the eta product $\ff_2(z)^{24/\dd}$ has level $\Gamma(2\dd)$ (e.g.\ see Theorem 1.7 and section 2.1 of \cite{Koehler}), $\Gamma^\dd$ is a congruence subgroup containing $\Gamma(2\dd)$, and $\chi$ is a character of $\SL_2(\Z/48\Z) \cap L_{48}$---the image of $\Gamma_0(2)$ in $\SL_2(\Z/48\Z)$, 
where
$$
L_{48} = L \otimes \Z/48\Z = \{\gamma \in M_2(\Z/48\Z): \gamma \equiv \smat{*}{*}{0}{*} \bmod{2} \}.$$
The values of $\chi$ can be computed using eta multipliers, and expressed in the following way.
\begin{proposition}
  \label{prop:chival}
  Let $\chi = \chi_2 \chi_3$ with $\chi_p$ the $p$-component of the character defined by $\chi_2 := \chi^9, \chi_3 := \chi^{16}$.
  Then $\chi(\gamma) = \chi_2(\gamma_2) \chi_3(\gamma_3)$ for any $\gamma = (\gamma_2, \gamma_3) \in \SL_2(\Z/48\Z) \cap L_{48}$.
Furthermore, $\chi_p$ is given by
  \begin{equation}
    \label{eq:chip}
    \begin{split}
      \chi_2\lp \pmat{a}{b}{c}{d}_2 \rp &=
      \lp \frac{2}{a} \rp \zeta_8^{3a(b + c/2)},~
      \chi_3\lp \pmat{a}{b}{c}{d}_3 \rp =
 \zeta_3^{-(a + d)c + bd(c^2 - 1)}
    \end{split}
  \end{equation}
  for any $\gamma = \smat{a}{b}{c}{d} \in \SL_2(\Z/48\Z) \cap L_{48}$.
\end{proposition}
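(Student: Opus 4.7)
The plan is in two stages: first establish the multiplicative decomposition $\chi = \chi_2 \chi_3$ on structural grounds, and then pin down each factor by computing $\chi$ explicitly via the eta-multiplier system of $\ff_2$.

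For the decomposition, I would observe from $\chi(T) = \chi(B)^{-1} = \zeta_{24}$ in \eqref{eq:chiTB} together with $TB \in \Gamma^{24}$ that $\chi$ has order exactly $24$ as a character of the finite quotient $\Gamma_0(2)/\Gamma^{24}$. The Chinese remainder isomorphism $\mu_{24} \cong \mu_8 \times \mu_3$ then splits $\chi = \chi_2 \chi_3$ with $\chi_2^8 = 1 = \chi_3^3$. Since $9 \equiv 1 \bmod 8$, $9 \equiv 0 \bmod 3$ and $16 \equiv 0 \bmod 8$, $16 \equiv 1 \bmod 3$, one reads off $\chi_2 = \chi^9$ and $\chi_3 = \chi^{16}$ with $\chi_2 \chi_3 = \chi^{25} = \chi$. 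Since $\chi$ already factors through $\SL_2(\Z/48\Z) \cap L_{48}$ and $48 = 16 \cdot 3$, a second application of CRT will split the dependence of $\chi_2$ onto the mod-$16$ part and $\chi_3$ onto the mod-$3$ part once the explicit formulas are in hand.

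For the computation of $\chi$ itself, write $\ff_2(z) = \sqrt{2}\,\eta(2z)/\eta(z)$. For $\gamma = \smat{a}{b}{c}{d} \in \Gamma_0(2)$ (so $c$ is even), the identity $2\gamma z = \gamma^{\ast}(2z)$ with $\gamma^{\ast} := \smat{a}{2b}{c/2}{d} \in \SL_2(\Z)$, combined with the transformation laws $\eta(\gamma z) = \epsilon(\gamma)(cz+d)^{1/2}\eta(z)$ and $\eta(\gamma^{\ast}(2z)) = \epsilon(\gamma^{\ast})(cz+d)^{1/2}\eta(2z)$, gives
\begin{equation*}
\chi(\gamma) = \epsilon(\gamma^{\ast})/\epsilon(\gamma).
\end{equation*}
Plugging in Rademacher's closed form for the eta-multiplier (a Jacobi symbol in the bottom row combined with an explicit $\zeta_{24}$-phase built from Dedekind sums), and using quadratic reciprocity to combine $\lp \tfrac{c/2}{d}\rp$ and $\lp \tfrac{c}{d}\rp$, produces one explicit closed form for $\chi(\gamma)$.

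The final step is to take the $9$th and $16$th powers of this closed form and simplify. For $\chi_3 = \chi^{16}$, the $\pm 1$-valued Jacobi symbol is killed, leaving only the $\zeta_{24}$-phase raised to the $16$th power; reducing the integer exponent modulo $3$ and using that $c$ is even so that odd constants drop out yields $\zeta_3^{-(a+d)c + bd(c^2-1)}$. For $\chi_2 = \chi^9$ the Jacobi symbol survives (odd power); the supplementary laws $\lp\tfrac{-1}{a}\rp = (-1)^{(a-1)/2}$ and $\lp\tfrac{2}{a}\rp = (-1)^{(a^2-1)/8}$ together with $a$ odd allow one to rewrite the surviving symbol as $\lp\tfrac{2}{a}\rp$, with the deficit absorbed into an extra $\zeta_8$-contribution that combines with the phase to give $\zeta_8^{3a(b+c/2)}$. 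Both resulting expressions manifestly depend only on the asserted mod-$16$ and mod-$3$ reductions, which confirms the product decomposition $\chi(\gamma) = \chi_2(\gamma_2)\chi_3(\gamma_3)$.

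The main obstacle is the case-split in Rademacher's closed form for $\epsilon$, which depends on the parity of the bottom-left entry: although $\gamma$ always has $c$ even, the auxiliary matrix $\gamma^{\ast}$ has $c/2$ of either parity, so the closed form must be applied in two regimes and the resulting phases reconciled. As a robustness check I would verify the final formula on generators of $\Gamma_0(2)/\Gamma(48)$: at $T$ the right-hand side evaluates to $\chi_2(T)\chi_3(T) = \zeta_8^3 \cdot \zeta_3^{-1} = \zeta_{24} = \chi(T)$; at $B = \smat{1}{0}{-2}{1}$ one gets $\chi_2(B)\chi_3(B) = \zeta_8^{-3}\cdot \zeta_3 = \zeta_{24}^{-1} = \chi(B)$; and verifying additionally at $-I$ and at $\smat{1}{0}{2}{1}$ fixes the character uniquely on the finite quotient $\SL_2(\Z/48\Z) \cap L_{48}$.
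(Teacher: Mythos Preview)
Your approach is correct but takes a genuinely different route from the paper. The paper does \emph{not} compute $\chi$ via Rademacher's closed form for the eta multiplier. Instead, it invokes Lemma~6 of \cite{Gee} to express an arbitrary $\gamma_2 \in \SL_2(\Z/16\Z)$ with even lower-left entry as an explicit word in the generators $T$ and $B$, namely
\[
\gamma_2 \equiv B^{(1-(1+c)a^{-1})/2}\,T\,B^{(1-a)/2}\,T^{\,b(1+c)a^{-1}-d} \pmod{16},
\]
and then evaluates $\chi_2 = \chi^9$ on this word using only the values $\chi(T)=\zeta_{24}$, $\chi(B)=\zeta_{24}^{-1}$ from \eqref{eq:chiTB}; the simplification to $\lp\tfrac{2}{a}\rp\zeta_8^{3a(b+c/2)}$ then proceeds via the congruences $a\equiv a^{-1}\bmod 8$ and $(a+a^{-1})/2 \equiv a + (a^2-1)/2 \bmod 8$. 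Your route through $\chi(\gamma)=\epsilon(\gamma^\ast)/\epsilon(\gamma)$ is more self-contained (it uses only the classical eta-multiplier formula rather than an external decomposition lemma) and makes the connection to the definition of $\ff_2$ transparent, at the cost of the parity case-split on $c/2$ that you correctly flag. The paper's route trades that case-split for reliance on Gee's generator lemma, yielding a purely exponent-arithmetic reduction with no Jacobi-symbol manipulation until the very end.
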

\begin{remark}
  \label{rmk:chi3}
Note that $c^2 \equiv 0, 1 \bmod{3}$ depending on whether 3 divides $c$ or not.
\end{remark}
\begin{proof}
  Since $\chi_p$ only depends on $\gamma$ modulo powers of $p$, the first claim clearly holds.
  For the explicit formula of $\chi_2$, we know that $\Gamma(48) \subset \ker(\chi)$ and $\Gamma(16) \subset \ker(\chi_2)$. So given $\gamma_2 = \smat{a}{b}{c}{d} \in \SL_2(\Z/16\Z)$ with $c \in 2\Z/16\Z$, we can apply Lemma 6 in \cite{Gee} to write
  $$
  \gamma_2 \equiv ST^{-(1 + c)a^{-1}}ST^{-a}ST^{b(1+c)a^{-1}-d}
\equiv B^{(1-(1 + c)a^{-1})/2} T B^{(1 -a)/2} T^{b(1 + c)a^{-1} - d}
  \bmod{16}
  $$
 with $a^{-1}$ the inverse of $a \bmod{16}$.
  Therefore, we have
  \begin{align*}
    \chi_2(\gamma)
    &=
      \chi(B^{(1-(1 + c)a^{-1})/2} T B^{(1 -a)/2} T^{b(1 + c)a^{-1} - d})^9 \\
    &= \zeta_8^{3(((1 + c)a^{-1}-1)/2 + 1 + (a-1)/2 + b(1 + c)a^{-1} - d)}.
  \end{align*}
  Using $a \equiv a^{-1} \bmod{8}$ and $(a + a^{-1})/2 \equiv a + (a^2 - 1)/2 \bmod{8}$, it is easy to check that
 \begin{align*}
   ((1 + c)a^{-1} - 1)/2  + 1 + (a-1)/2 &\equiv (a + a^{-1})/2 + a^{-1}c/2 \\
   &\equiv a(1 + c/2) + (a^2 - 1)/2 \bmod{8},\\
   b(1 + c)a^{-1} - d &\equiv ab(1 + c) - d \\
   &\equiv ab + a^2d - a - d \equiv a(b - 1) \bmod{8}.
  \end{align*}
  Substituting these in and applying $\lp \frac{2}{a} \rp = (-1)^{(a^2 - 1)/8}$ gives us the formula \eqref{eq:chip}.
    The case of $\chi_3$ is similar and we leave its proof to the reader.
\end{proof}
Let $K_\dd^0$ be the subgroup of $\GL_2(\hat\Z)$ generated by $\nu(\hat\Z^\times)$, and the preimage of $\Gamma^\dd/\Gamma(2\dd)$.
Then $K_\dd^0$ is also invariant with respect to conjugation by the preimage of $\Gamma_0(2)$ in $\SL_2(\hat\Z) \subset \GL_2(\hat\Z)$,
under the projection $\GL_2(\hat\Z) \rightarrow \GL_2(\Z/{2\dd\Z})$. Let
$$
K_\dd =H(\A_f) \cap (K_\dd^0 \times K_\dd^0),~ K_\dd' =\langle K_\dd, (T, T)\rangle.
$$
Notice that $K'_1 = K_1$ .
 Then we have as Shimura varieties
\begin{align*}
X_\dd^0 &= \GL_2(\Q) \backslash \mathbb D^0 \times \GL_2(\A_f)/K_\dd^0 =\Gamma^\dd \backslash \H,
\\
X_\dd&= H(\Q) \backslash \mathbb D \times H(\A_f)/K_\dd   = (\Gamma^\dd \backslash \H)^2,
\end{align*}
and a natural projection
$$
X_\dd \rightarrow   X_\dd' =  H(\Q) \backslash \mathbb D \times H(\A_f)/K_\dd' =H_\dd'\backslash \H^2.
$$
Here $H_\dd':= K_\dd' \cap  H(\Q)^+ = \langle \Gamma^\dd \times \Gamma^\dd,  (T, T)\rangle$.
We use $X_\dd^\Delta \subset X_\dd$ to denote the divisor given by the image of the diagonal embedding $X_\dd^0 \hookrightarrow X_\dd$, and
\begin{equation}
  \label{eq:Xdj}
X^\Delta_\dd(j) := (T^j \times \mathds{1})^*(X^\Delta_\dd) \subset X_\dd
\end{equation}
the pullback along the translation by $(T, 1)$ map (see (4.3) of \cite{LY}).
Note that $X^\Delta_\dd(j)$ descends to a divisor on $X'_\dd$ since $\Gamma^\dd \subset \Gamma_0(2)$ is a normal subgroup.
We slightly abuse notation and use $X^\Delta_\dd(j)$ to denote this divisor on $X_\dd'$.

The function $|\mathfrak f_2(z_1) -\epsilon \mathfrak f_2(z_2)|$  ($\epsilon =\pm 1$) is a (non-holomorphic) modular function on $ X_\dd'$.
First, we have the following generalization of \cite[Lemma 5.2]{LY}.

\begin{lemma}
\label{lemma:KdT}
Let $D_1, D_2$ be discriminants satisfying \eqref{eqd}, and $Z= (z_1, z_2) \in \H^2$ a small CM point with $z_i = \frac{b_i + \sqrt{D_i}}{2a_i}$ having discriminant $D_i$ and satisfying
\begin{equation}
  \label{eq:zicond1}
\gcd(6, a_1a_2) = 1,~ a_1 b_2 \equiv a_2 b_1 \bmod{48}.
\end{equation}
Denote$$
\iota_Z= (\iota_{z_1}, \iota_{z_2}):  T(\A_f)=\kay_{D,f}^\times  \rightarrow H(\A_f)
$$
the associated embedding from the torus $T$ and $H$.
For $\dd|24$, let $K_{\dd, T} \subset T(\A_f)$ be the preimage of $K_\dd'$. Then
$$
K_{\dd, T} = \hat{\OO}_D^\times
$$
is independent of $\dd$, where $D = \mathrm{lcm}(D_1, D_2)$.
\end{lemma}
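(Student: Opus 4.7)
The plan is to bound $K_{\dd, T}$ from above and below. The chain $K_{24, T} \subset K_{\dd, T} \subset K_{1, T}$ follows from $\Gamma^{24} \subset \Gamma^\dd \subset \Gamma^1 = \Gamma_0(2)$, together with the observation (already noted in the text) that $(T,T) \in K_1$, so $K_1' = K_1$. The upper bound $K_{1,T} = \hat{\OO}_D^\times$ is \cite[Lemma 5.2]{LY} (itself a version of \cite[Lemma 5.4]{YY19}), and is where the individual discriminant condition $D_i \equiv 1 \bmod 8$ enters. It therefore remains to prove $\hat{\OO}_D^\times \subset K_{24,T}$.

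Fix $t \in \hat{\OO}_D^\times$. Since $\det \iota_{z_i}(t) = \Nm(t)$ is independent of $i$, I can write $\iota_{z_i}(t) = \nu(\Nm t) \gamma_i$ with $\gamma_i \in \SL_2(\A_f)$; from the upper bound applied to each component, $\gamma_i$ lies in the preimage of $\Gamma_0(2)/\Gamma(48)$ in $\SL_2(\hat\Z)$. The condition $\iota_Z(t) \in K_{24}'$ then reduces to finding $j \in \Z$ with $T^j \gamma_i \in (\Gamma^{24})^\wedge$ for both $i = 1,2$. Since $\Gamma^{24} = \ker\chi$ inside $\Gamma_0(2)$ and $\chi(T) = \zeta_{24}$ generates the image of $\chi$ by \eqref{eq:chiTB}, such a $j$ exists if and only if $\chi(\gamma_1) = \chi(\gamma_2)$.

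The problem thus reduces to verifying $\chi(\gamma_1) = \chi(\gamma_2)$ for every $t \in \hat{\OO}_D^\times$. I would check this on a topological generating set of $\hat{\OO}_D^\times$; at primes $p \nmid 6D$ the reduction of $\iota_{z_i}(t)$ modulo $48$ is scalar, so the condition is automatic there. At the remaining primes I would use \eqref{eq:iotazp} with the basis $\{1, -\overline{z_i}\}$ to write $\iota_{z_i}(t) \bmod 48$ explicitly in terms of $a_i, b_i, D_i$, extract $\gamma_i = \nu(\Nm t)^{-1} \iota_{z_i}(t)$, and apply the explicit formulas in \eqref{eq:chip} via the factorization $\chi = \chi_2 \chi_3$ from Proposition \ref{prop:chival}. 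The main obstacle is precisely this explicit case analysis at $p \in \{2,3\}$ and $p \mid D$: the $\chi_2$-piece must be handled modulo $16$ and the $\chi_3$-piece modulo $3$, and the hypotheses $\gcd(6, a_1 a_2) = 1$ and $a_1 b_2 \equiv a_2 b_1 \bmod 48$ of \eqref{eq:zicond1}, combined with $D_i \equiv 1 \bmod 8$ and $3 \nmid D_i$ from \eqref{eqd}, are calibrated exactly to force the two sides to match. The bookkeeping is streamlined by choosing a small set of local generators of $\hat{\OO}_{D,p}^\times$ at each prime and exploiting the invariance of $\chi_p$ under conjugation by $\nu(\hat\Z^\times)$.
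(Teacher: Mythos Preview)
Your overall structure---squeeze $K_{\dd,T}$ between $K_{24,T}$ and $K_{1,T}$, identify the upper bound as $\hat{\OO}_D^\times$ via \cite[Lemma~5.4]{YY19}, and then prove the lower bound by factoring $\iota_{z_i}(t)$ as $\nu(d)$ times an $\SL_2$-element---matches the paper exactly. The difference lies in how the key step (that the same power of $T$ works for both $i$, i.e.\ $\chi(\gamma_1)=\chi(\gamma_2)$) is established.

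The paper avoids the explicit character computation you outline. Rather than evaluating $\chi_2,\chi_3$ on $\gamma_i$ via Proposition~\ref{prop:chival}, it first replaces $z_i$ by an auxiliary point $z_i' = \tfrac{b' + \sqrt{D_i'}}{2}$ with $b' \equiv b_i a_i^{-1} \bmod 48$ (the same $b'$ for both $i$, thanks to \eqref{eq:zicond1}), and then invokes Gee's result \cite[Prop.~12,~13]{Gee} that $\zeta_{48}^{b'-4}\ff_2$ is fixed by the Shimura-reciprocity action of $W_{48,z_i'} = \iota_{z_i'}((\OO_D/48)^\times)$. This yields directly $\iota_{z_i}(r) \equiv \nu(d)\,\gamma_i\,T^{(1-d)(b'-4)/2} \bmod 48$ with $\gamma_i \in \Gamma^{24}$ and the $T$-exponent visibly independent of $i$. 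The remark immediately following the proof notes that this circumvents the explicit calculations of \cite[Lemma~5.2]{LY}; your proposal is essentially that computational route, which would also work but is what the paper is replacing with a class-invariant argument.

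Two small points on your write-up. The ``only if'' in your reduction to $\chi(\gamma_1)=\chi(\gamma_2)$ would require $K_{24}^0 \cap \SL_2(\hat\Z)$ to coincide with the closure of $\Gamma^{24}$, which is not obvious since conjugation by $\nu(a)$ need not preserve $\Gamma^{24}$; fortunately you only need the ``if'' direction for the inclusion you are proving. And since $\gcd(6,D)=1$ under \eqref{eqd}, primes $p\mid D$ contribute trivially modulo $48$, so no separate case analysis is needed there---only $p\in\{2,3\}$ matter.
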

\begin{remark}
  \label{rmk:Kd0}
  If $z_0 = \frac{b + \sqrt{D}}{2a}$ is a CM point with discriminant $D \equiv 1 \bmod{8}$ and $2 \nmid a$, then $\iota_{z_0}^{-1}(K_1^0) = \hat{\OO}_D^\times$ by Lemma 5.4 in \cite{YY19}.
  However, $\iota_{z_0}^{-1}(K_\dd^0)$ will be strictly contained in $\hat{\OO}_D^\times$ if $\dd > 1$.
\end{remark}

\begin{remark}
  \label{rmk:zicond1}
  If $z_i$ are chosen as in Lemma \ref{lem:smallCM}, then they also satisfy \eqref{eq:zicond1}.
\end{remark}

\begin{remark}
This gives another proof of Lemma 5.2 in \cite{LY} without resorting to explicit calculations. 
\end{remark}
\begin{proof}
First of all, we have $\iota(\hat{\OO}_D^\times) \subset H(\hat\Z)$. 
Since $K_1' = K_1 = H(\A_f) \cap (K_1^0 \times K_1^0)$, Remark \ref{rmk:Kd0} implies that
$$
K_{1, T} = \iota^{-1}(K_1') = \iota^{-1}(K_1) =
\iota^{-1} (K_1^0 \times K_1^0) = \iota_{z_1}^{-1}(K_1^0) \cap \iota_{z_2}^{-1}(K_1^0) = \hat{\OO}_{D_1}^\times
\cap \hat{\OO}_{D_2}^\times
= \hat{\OO}_{D}^\times.
$$
Since $K_{\dd, T} \subset K_{1, T}$ by definition, we just need to show that $ \hat{\OO}_{D}^\times \subset K_{\dd, T}$, or equivalently $\iota(\hat{\OO}_{D}^\times) \subset K_{\dd}'$.
Furthermore, we only need to check the places above 2 and 3, as $K_{\dd, T}$ and $K_{1, T}$ are the same everywhere else.
Since the map $\iota_{z_i}$ then only depends on $a_i, b_i, D_i$ modulo 48, we can apply condition \eqref{eq:zicond1} and \eqref{eq:iotaz2} to check that for all $r = \alpha + \beta\sqrt{D} \in (\OO_{D}/48\OO_D)^\times \cong (\OO_{D_0}/48 \OO_{D_0})^\times$
$$
\iota_{z_i}(r) \equiv \iota_{z_i'}(r'_i) \bmod{48}
$$
where $r'_i = \alpha + a_i \beta\sqrt{D_i'} \in (\OO_{D}/48 \OO_{D})^\times$ and $z_i' := \frac{b' + \sqrt{D_i'}}{2}$ with $b' \in \Z$ and $D_i' < 0$ a discriminant satisfying
$$
b' \equiv b_i a_i^{-1} \bmod{48},~ D_i' \equiv a_i^{-2} D \bmod{48}.
$$
Note that $D_i'$ still satisfies \eqref{eqd} as $\gcd(a_i, 6) = 1$ and
$$
d := r_i' \overline{r_i'} \bmod{48}
$$
is independent of $i = 1, 2$.
Now by Prop.\ 12 and 13 in \cite{Gee} and Remark \ref{rmk:iotaz}, we know that $\zeta_{48}^{b'-4}\ff_2$ is invariant under the action of
$$
 W_{48, z_i'} := \iota_{z_i'}((\OO_D/48\OO_D)^\times) \subset \GL_2(\Z/48\Z),
$$
given in (1) of \cite{Gee}, i,e.\
there exists $\gamma_i \in \Gamma^{24}/\Gamma(48)$ such that
$$
\iota_{z_i'}(r'_i)  \equiv \nu(d) \gamma_i T^{\frac{1 - d}{2} (b' - 4)} \bmod{48},
$$
which implies that
$$
\iota(r) = (\nu(d), \nu(d)) (\gamma_1, \gamma_2) (T, T)^{(1-d)(b'-4)/2} \in K_{24}'.$$
This finishes the proof.
\end{proof}

\section{Borcherds products and small CM value formula}

\subsection{A brief review of a result of \cite{LY}} The first step to prove Theorem~\ref{thm:YZconj}  is a result of \cite[Theorem 1.8]{LY} to write $\mathfrak f_2(z_1) - \mathfrak f_2(z_2)$
as product of Borcherds products, which we now review.

Let $\dd|24$.
Endow  $V=V_\dd=M_2(\Q)$  with quadratic form  $Q=Q_\dd = \dd \det$. The lattice
$$
L=L_\dd=\{ \kzxz {a} {b} {c} {d}  \in M_2(\Z) :\, 2|c\} \subset V
$$
is even integral with respect to $Q_\dd$.
Denote $\omega_{\dd}$ and $\omega_{\dd, p}$ the Weil representation of $\SL_2(\Z)$ on $S(L_{\dd})$ and $S(L_{\dd} \otimes \Z_p)$ respectively.
Define
\begin{equation}
  \label{eq:ufd}
  \begin{split}
    \phi_\dd &:=
    \sum_{\substack{s \in (\Z/\dd\Z)^\times\\ \gamma \in L/\dd L \\ \det(\gamma) \equiv 1 \bmod{\dd}}}
\chi(\gamma)^{(24/\dd)s} \phi_{\frac{1}{\dd} \gamma}
=    \sum_{j \in \Zb/\dd\Zb} a_\dd(j) \lp \sum_{\mu \in \frac{1}{\dd}T^j \Gamma^{ \dd}/L} \phi_\mu \rp \in S(L_\dd),\\
a_\dd(j) &:= \lp \sum_{s \in (\Zb/\dd\Zb)^\times} \zeta_{\dd}^{sj} \rp = \mu\lp \frac{\dd}{(\dd, j)} \rp \frac{\varphi(\dd)}{\varphi(\dd/(\dd, j))} \in \Zb.
  \end{split}
\end{equation}
  Note that the coefficient $a_{\dd_p}(j)$ is non-zero in exactly the following cases.
\begin{table}[h]
\begin{tabular}{|c||c|c|c|c|c|c|c|}
 \hline
$(\dd_p, j)$    & $(\dd_p, 0)$ & $(2, 1)$ & $(4, 2)$ & $(8, 4)$ & $(3, 1)$ & $(3, 2)$ \\
 \hline
 $a_{\dd_p}(j)$ & $\varphi(\dd_p)$ & -1& -2 & -4 & -1&-1 \\
 \hline
 \end{tabular}
\caption{Nonzero values of $a_{\dd_p}(j)$.}
\end{table}
Using the vector $\phi_\dd$ and its translates, we were able to construct Borcherds products with suitable divisors in \cite{LY}.
\begin{theorem} (\cite[Theorem 4.4, 4.5]{LY}) \label{theo:BorcherdsLifting}
  For every $\dd \mid 24$, there exists $\tilde{F}_\dd \in M^!(\omega_d)^{H_\dd'}$ such that
  \footnote{The factor defining $\tilde F_\dd$ in Theorem 4.4 in \cite{LY} should be $\sqrt{2}^{24/\dd}$ instead of $\sqrt{2}^\dd$.}
  $$
  \tilde{F}_\dd (\tau) =
  q^{-1/\dd} \phi_{\dd} +
\delta_{\dd = 1} 24 \phi_{L + \smat{0}{1/2}00}
+ O(q^{1/(2\dd)}),
  $$
  and the divisor of its  Borcherds lifting $\Psi_\dd(z) = \Psi(z_1, z_2,\tilde F_\dd)$ (see e.g.\  \cite[section 4.1]{LY})   is given by


    $$
    \mathrm{Div } \Psi_\dd(z) = \sum_{j \in \Zb/\dd\Zb} a_\dd(j) X_\dd^\Delta(j),
    $$
where $X^\Delta_\dd(j) \subset X'_\dd$ is defined in \eqref{eq:Xdj}.
Finally, we have 
\begin{equation}
  \label{eq:Blift}
  \lp \ff_2(z_1)^{24/s} - \ff_2(z_2)^{24/s} \rp^{s} = \prod_{\dd \mid s}
\Psi_\dd(z_1, z_2)
\end{equation}
for every $s|24$.
\end{theorem}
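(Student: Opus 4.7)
The proof naturally breaks into three stages: constructing $\tilde{F}_\dd$, computing the divisor of $\Psi_\dd$, and comparing divisors to establish \eqref{eq:Blift}. For the construction, I would first embed the three-dimensional representation $\varrho_\dd$ of \eqref{eq:varrhod} into the Weil representation $\omega_\dd$ on $S(L_\dd)$ by sending the first basis vector to $\phi_\dd$ of \eqref{eq:ufd} and the other two to appropriate scalar multiples of $\omega_\dd(S)\phi_\dd$ and $\omega_\dd(TS)\phi_\dd$. Verifying that this gives an intertwining map $\iota_\dd$ reduces to computing the $T$ and $S$ actions on $\phi_\dd$: the $T$-eigenvalue $\zeta_\dd^{-1}$ matching the $(1,1)$-entry of $\varrho_\dd(T)$ follows from the character-sum structure of \eqref{eq:ufd} together with $Q_\dd(\mu)\equiv -1/\dd\bmod\Z$ for every $\mu$ in the support of $\phi_\dd$. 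I then set $\tilde{F}_\dd:=\iota_\dd\circ F_\dd$, adding the explicit cusp correction $24(\phi_L+\phi_{L+\smat{0}{1/2}{1}{0}})$ when $\dd=1$ to absorb the $O(1)$ term of $F_1$; the stated principal part is then immediate from \eqref{eq:Fd}.

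For $H_\dd'$-invariance, the crucial point is invariance of $\phi_\dd$ under the conjugation action $(g_1,g_2)\cdot X=g_1Xg_2^{-1}$ of $\Gamma^\dd\times\Gamma^\dd$ on $L_\dd/\dd L_\dd$. Because the character $\chi^{24/\dd}$ factors through $\Gamma_0(2)/\Gamma^\dd$ and $\phi_\dd$ is a $\chi^{24/\dd}$-twisted average, conjugation by elements of $\Gamma^\dd=\ker(\chi^{24/\dd})$ preserves it. Invariance under the diagonal $(T,T)$ then follows by using $\chi(T)=\zeta_{24}$ from \eqref{eq:chiTB} to verify that the three basis vectors of the embedded $\varrho_\dd$ are permuted with phases that cancel those appearing in $F_\dd$.

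For the divisor, Borcherds' theorem associates to $\tilde{F}_\dd\in M^!(\omega_\dd)^{H_\dd'}$ a meromorphic function $\Psi_\dd$ on $X_\dd'$ whose divisor is a sum over components $\mu\in L_\dd'/L_\dd$ of Heegner divisors weighted by the principal-part coefficients. The key geometric identification is that a class $\mu$ in the support of $\phi_\dd$ lies in some orbit $\frac{1}{\dd}T^j\Gamma^\dd/L$, and a norm $-1/\dd$ representative of such $\mu$ is, after scaling by $\dd$, a matrix in $T^j\Gamma^\dd\subset\Gamma_0(2)$; its vanishing locus in $\H^2$ under the pairing with $w_\dd(z_1,z_2)$ is precisely the $T^j$-translate of the diagonal. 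Summing the contributions from each orbit, with multiplicity $a_\dd(j)$ coming from the expansion of $\phi_\dd$ in \eqref{eq:ufd}, yields $\sum_{j\in\Z/\dd\Z}a_\dd(j)X_\dd^\Delta(j)$.

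Finally, \eqref{eq:Blift} is proved by comparing divisors on $X_s'$. The previous stage gives $\operatorname{Div}\prod_{\dd\mid s}\Psi_\dd=\sum_{\dd\mid s}\sum_{j\in\Z/\dd\Z}a_\dd(j)X_\dd^\Delta(j)$, and by Möbius inversion using $a_\dd(j)=\sum_{t\in(\Z/\dd\Z)^\times}\zeta_\dd^{tj}$ this collapses to $s\sum_{j\in\Z/s\Z}X_s^\Delta(j)$, which is precisely the divisor of $(\ff_2(z_1)^{24/s}-\ff_2(z_2)^{24/s})^s$. The ratio is therefore a constant, which is pinned down to $1$ by matching leading $q$-coefficients at the cusp $\infty$—a comparison made especially clean on the Borcherds side, where leading terms are controlled directly by the principal parts. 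The main technical obstacle in this program is the precise dictionary in the divisor computation between arithmetic data (classes in $L_\dd'/L_\dd$ of norm $-1/\dd$, expressed via the $2$- and $3$-component decomposition of $\chi$ from Proposition \ref{prop:chival}) and the geometric translates $X_\dd^\Delta(j)$, which depends crucially on the normality of $\Gamma^\dd\subset\Gamma_0(2)$.
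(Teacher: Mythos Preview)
This theorem is not proved in the present paper: it is quoted directly from \cite[Theorems 4.4, 4.5]{LY}, with no argument given here. Your proposal is a reasonable outline of the proof strategy carried out in \cite{LY}, and the construction of $\tilde F_\dd$ via an embedding $\iota_\dd$ of $\varrho_\dd$ into $\omega_\dd$ (sending the standard basis to $\phi_\dd$, $\omega_\dd(S)\phi_\dd$, and a phase-shifted $\omega_\dd(TS)\phi_\dd$) matches the commented-out material in the paper and Propositions~3.5--3.6 of \cite{LY}.

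However, your final divisor computation contains a concrete error. After pulling back each $\operatorname{Div}\Psi_\dd$ along $X_s'\to X_\dd'$, the coefficient of $X_s^\Delta(j')$ in $\sum_{\dd\mid s}\operatorname{Div}\Psi_\dd$ is
\[
\sum_{\dd\mid s} a_\dd(j'\bmod \dd)
=\sum_{\dd\mid s}\sum_{t\in(\Z/\dd\Z)^\times}\zeta_\dd^{tj'}
=\sum_{k\in\Z/s\Z}\zeta_s^{kj'}
=s\,\delta_{j'=0},
\]
so the divisor collapses to $s\cdot X_s^\Delta(0)$, not to $s\sum_{j\in\Z/s\Z}X_s^\Delta(j)$ as you wrote. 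Correspondingly, the divisor of $(\ff_2(z_1)^{24/s}-\ff_2(z_2)^{24/s})^s$ on $X_s'$ is also $s\cdot X_s^\Delta(0)$: since $\ff_2^{24}$ is a Hauptmodul for $\Gamma_0(2)$ and $\Gamma_0(2)/\Gamma^s\cong\Z/s\Z$ acts by $\ff_2^{24/s}\mapsto\zeta_s\ff_2^{24/s}$, the function $\ff_2^{24/s}$ is itself a Hauptmodul for $\Gamma^s$, and hence $\ff_2(z_1)^{24/s}=\ff_2(z_2)^{24/s}$ forces $z_1=z_2$ in $\Gamma^s\backslash\H$. Your two errors cancel, so the overall strategy survives, but the stated intermediate claims are both false and should be corrected to $s\cdot X_s^\Delta(0)$.
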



\begin{remark}
  \label{rmk:Psid}
  We can invert equation \eqref{eq:Blift} and express
  $$
  \Psi_\dd(z_1, z_2) =  \prod_{s \mid \dd}   \lp \ff_2(z_1)^{24/s} - \ff_2(z_2)^{24/s} \rp^{s \cdot \mu(\dd/s)}.
$$
\end{remark}

  It is easy to check that the coefficients $a_\dd(j)$ are multiplicative, i.e.
\begin{equation}
  \label{eq:adjmult}
  a_\dd(j) = \prod_{p \text{ prime}} a_{\dd_p}(j \bmod \dd_p).
\end{equation}
Therefore, we have $\phi_\dd = \otimes_{p < \infty}\phi_{\dd, p}$ with $\phi_{\dd, p} \in S(L_{\dd} \otimes \Z_p)$ given by
\begin{equation} \label{eq:u}
\phi_{\dd, p} :=
  \sum_{j \in \Zb/\dd_p\Zb}
a_{\dd_p}(j)
    \lp \sum_{\mu \in (\frac{1}{\dd}T^j \Gamma^{ \dd}/L_{}) \otimes \Z_p} \phi_\mu \rp,~ \text{ when }p \mid 6,
  \end{equation}
  and the characteristic function of $L_\dd \otimes \Zb_p = M_2(\Z_p)$ when $p \nmid 6$.
  So as a function on $\frac{1}{\dd} L \otimes \Zb_p$, we can write
  \begin{equation}
    \label{eq:phid23}
    \begin{split}
          \phi_{\dd} \lp \frac{\mu}{\dd} \rp &=
    \phi_{\dd, 2} \lp \frac{\mu}{\dd} \rp
    \phi_{\dd, 3} \lp \frac{\mu}{\dd} \rp  \prod_{p \nmid 6} \phi_{\dd, p}(\mu),\\
    \phi_{\dd, 2} \lp \frac{\mu}{\dd} \rp &=     \phi_{\dd_2, 2} \lp \frac{\mu}{\dd_2} \rp,~
    \phi_{\dd, 3} \lp \frac{\mu}{\dd} \rp =         \phi_{\dd_3, 3} \lp \frac{\mu}{\dd_3} \rp .
    \end{split}
  \end{equation}
  Furthermore, one can use $\dd_2^2 \equiv 1 \bmod{3}$ and $\dd_3^2 \equiv 1 \bmod 8$ to check that this identification intertwines $\omega_{\dd, 3}$ and $\omega_{\dd_3, 3}$.


  \subsection{Incoherent Eisenstein Series}
  \label{subsec:incoh}
Write $\kay =\Q(\sqrt D)$ and let $\epsilon =\epsilon_{\kay/\Q}$ the Dirichlet character associated to the quadratic field $\kay$. 
Recall that $\mathcal  N=(\mathcal N_\dd, Q_\dd) \cong (\mathfrak a, -\frac{\dd t}{a} \Nm)$ with $\dd \mid 24$. Write $\hat{\mathfrak a} =  \hat{\delta}  \hat{\OO}_{4D_0}$ with  $\hat{\delta} \overline{\hat{\delta}}=a u$ for some $u \in \hat{\OO}_{4D}^\times$. Then
$(\hat{\mathcal N}, Q_\dd) \cong (\hat{\OO}_{4D_0},  \kappa \Nm)$ with $\kappa =-\dd t u \in  \hat{\Z}$. Let $dx$ be the Haar measure on $\kay_{D, \A_f}$ such that $\hbox{Vol}(\hat{\OO}_D, dx) =|D|_{\A_f}^{\frac{1}2}=|D|^{-1/2}$.  This Haar measure depends only on the quiadratic field $\kay$ (not the choice of $D$).
Notice that $\mathcal N'/\mathcal  N \cong  \mathfrak a'/\mathfrak a$ and $U=\mathcal N \otimes_\Z \Q$. Associated to the lattice $\mathcal N$ is a vector-valued incoherent Eisenstein series of weight $1$ with Weil representation $\omega_{\mathcal N}$: 
\begin{equation}
E(\tau, s)=E_{\mathcal N}(\tau, s) =\sum_{\mu \in  \mathcal N'/\mathcal N} E(\tau, s, \mu)\phi_\mu.
\end{equation}
Due to the negative sign of its functional equation, $E(\tau, s, \mu)$ vanishes identically at $s = 0$.
Its derivative $E'(\tau, s, \mu)$ at $s = 0$ is a real-analytic modular form of weight 1, and has the following holomorphic part
 (\cite[Proposition 4.6]{BKY}, \cite[Remark 4.3]{YYY})
\begin{equation}
  \label{eq:Ec-hol}
\mathcal E_\Nc(\tau, \mu) = \sum_{m \ge 0} \kappa_\Nc(m, \mu) q^m  ,
\end{equation}
where
for $(m, \mu) \neq (0, 0)$
\begin{equation}
  \label{eq:kappaN2}
  \begin{split}
    \kappa_\Nc(m, \mu)
    &:=
  -2\pi (t\dd)^{-1} \frac{d}{ds}
  \lp \prod_{p <\infty} W_{m, p}(s, \mu; \Nc) \rp\mid_{s = 0},\\
    W_{m, p}(s, \mu; \Nc)
    &:=\int_{\Q_p} \int_{\mu+ \OO_{4D_0, p}} \psi(b\kappa x \bar x)  \psi(-mb)|a(wn(b))|_p^s  dxdb.
  \end{split}
\end{equation}
%
The constant $t\dd$ comes from the normalization 
$$
\hbox{Vol}(\mathcal N_p,  d_{\mathcal N}x) = |\kappa|_p \hbox{Vol}(\mathcal N_p, dx).
$$
and $\prod_{p < \infty} |\kappa|_p = \prod_{p < \infty} |t\dd|_p = (t\dd)^{-1}$. 
  On the other hand, $\kappa_\Nc(0, 0)$ is a suitable constant such that $E'(\tau, 0, \mu) - \mathcal E_\Nc(\tau, \mu) - \frac{\phi_\mu(0)}2 \log v$ decays exponentially as $v \to \infty$.

Assume that $ m >0$, let $\hbox{Diff}(m,  \mathcal N)$ be the set of finite primes $p$ such that $ U_p = \mathcal N \otimes_\Z \Q_p$ does not represent $m$, i.e.,  $\epsilon_{p}(\kappa m) =-1$. Then  $|\hbox{Diff}(m, \mathcal N)| \ge 1$ is odd, and for every $p \in  \hbox{Diff}(m, \mathcal N)$, we have $W_{m, p}(0, \mu, \mathcal N) =0$. 
The coefficient $\kappa_\Nc(m, \mu)$ is then non-zero only if $|\Diff(m, \Nc)| = 1$.

For convenience later, we also denote for $M \in \Nb$ and $(m, \mu) \neq (0, 0)$
\begin{equation}
  \label{eq:kappaNp}
  \begin{split}
      E^{(M)}_m(s, \mu; \Nc) &:=  \prod_{p \nmid M \infty} W_{m, p}(s, \mu; \Nc), \\
      \kappa^{(M)}_\Nc(m ,\mu) &:=
      \frac{d}{ds} \lp E^{(M)}_m(s, \mu; \Nc) \rp \mid_{s = 0} =
       \frac{d}{ds}
\lp \prod_{p <\infty,~ p \nmid M} W_{m, p}(s, \mu; \Nc) \rp\mid_{s = 0},
  \end{split}
\end{equation}
which is independent of the local component of $\mu$ at places dividing $M$.

Let $\rho(n)$. $\rho_p(n)$,  and $\rho^{(M)}(n)$ be as in the introduction, and define 
for $\ell \nmid 6D$,
\begin{equation}
  \label{eq:rho'}
    \rho'_p(m)
    :=  2\sum_{j \ge 1} \rho_p(m/p^{2j-1}) 
    =
    \begin{cases}
          \sum_{j \ge 1} \rho_p(m/p^j) =
  o_p(m) + 1
      &\text{if } p \in \Diff(m, \Nc_1),\\
      0 &\text{otherwise.}
    \end{cases}
\end{equation}
 The following results are well known (see for example \cite{KRYtiny}, \cite[Theorem 2.4]{KY10}) and can also be obtaining by  by  specializing the formulas in Section~\ref{subsec:Whitt}  to $\hat{\mathcal N}$, $\Delta =D_0$ and $\kappa =- dt u $, $u \in \hat\Z^\times$ with  $ u a \in \Nm_{\kay/\Q}\A_{\kay, f}^\times$.

 \begin{proposition} \label{prop:unramified} Let the notation be as above and suppose that $D_1$ and $D_2$ satisfy \eqref{eqd}.
   When $6D \mid M$, for $\mu \in \mathcal N'/\mathcal N$
we have
$$
E_m^{(M)}(s, \mu; \Nc ) =
\frac{1}{L^{(M)}(s+1, \epsilon)}
\begin{cases}
  \prod_{p\nmid M} \sum_{0\le n \le o_p(m)} (\epsilon (p) p^{-s})^n & m \neq 0,\\
\frac{  L^{(M)}(s, \epsilon)}2& m = 0.
\end{cases}
$$
In particular for $m > 0$, we have 
$$
E_m^{(M)}(0, \mu; \Nc )
=
\frac{\sqrt{|D_\kay|}}{\pi h_\kay}  \prod_{p \mid M} L_p(1, \epsilon)  \rho^{(M)}(m).
$$
It is zero if and only if there exists a prime $\ell \nmid M$ such that $\rho_\ell(m) =0$, i.e., $\ell \in \Diff(m, \mathcal N )$. In such a case, its derivative is given by
$$
E_m^{(M), \prime}(0, \mu; \Nc ) =
\frac{\log \ell}{L^{(M)}(1, \epsilon)}
\rho^{(M\ell)}(m)   \frac{1+ o_\ell(m)}2
= \frac{\log \ell}{L^{(M)}(1, \epsilon)}
\sum_{j \ge 1} \rho^{(M)}(m/\ell^j).
$$
\end{proposition}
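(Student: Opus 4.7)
The plan is to reduce everything to a local Whittaker computation at primes $p\nmid M$ and then assemble via Euler product, as is standard in the Kudla program (see e.g.\ \cite{KRYtiny, KY10}). The hypotheses $6D\mid M$ and $D_i$ satisfying \eqref{eqd} are used to ensure that at every $p\nmid M$ the lattice behaves unramified: since $p\nmid 6D\dd t$, we have $|\kappa|_p=1$ and, under the isometry $(\hat\Nc, Q_\dd)\cong(\hat\OO_{4D_0},\kappa\Nm)$, the local component $\Nc_p$ is identified with $\OO_{k,p}$ equipped with a $\Zb_p$-unimodular scaled norm form, and $\mu_p=0$ (the finite group $\Nc'/\Nc$ is supported at primes dividing $2\dd D$).

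First, I would compute $W_{m,p}(s,0;\Nc)$ for $p\nmid M$ by the now-standard unary-form calculation: performing the $x$-integral against $\psi(b\kappa x\bar x)$ produces a local density at a prime unramified in $k$, and then the $b$-integral is a geometric sum in $p^{-s}$. The outcome is
\[
W_{m,p}(s,0;\Nc)=\frac{1-\epsilon(p)p^{-s-1}}{1-p^{-2s-1}}\cdot\sum_{n=0}^{o_p(m)}\epsilon(p)^n p^{-sn}
\]
for $m\ne 0$, and the analogous formula $\frac{1}{2}\frac{1-\epsilon(p)p^{-s-1}}{1-p^{-s}}\cdot\frac{1-p^{-s}}{\cdots}$ yielding the $L^{(M)}(s,\epsilon)/2$ factor when $m=0$. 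Taking the Euler product over $p\nmid M\infty$ and recognizing $\prod_{p\nmid M}(1-\epsilon(p)p^{-s-1})^{-1}=L^{(M)}(s+1,\epsilon)^{-1}$ gives the closed-form expression for $E_m^{(M)}(s,\mu;\Nc)$ stated in the proposition.

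Next I would specialize at $s=0$: the finite sum degenerates to $\sum_{n=0}^{o_p(m)}\epsilon(p)^n=\rho_p(m)$, and multiplicativity gives $\prod_{p\nmid M}\rho_p(m)=\rho^{(M)}(m)$. Combining with the Dirichlet class number formula $L(1,\epsilon)=\tfrac{2\pi h_\kay}{w_\kay\sqrt{|D_\kay|}}$ and $w_\kay=2$ (which holds since $D\equiv 1\bmod 8$ forces $D<-4$), and writing $L(1,\epsilon)=L^{(M)}(1,\epsilon)\prod_{p\mid M}L_p(1,\epsilon)^{-1}$, one gets the claimed closed form for $E_m^{(M)}(0,\mu;\Nc)$.

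Finally, for the vanishing and derivative, note that the local factor $\rho_p(m)$ is zero precisely when $\epsilon(p)=-1$ (i.e.\ $p$ inert) and $o_p(m)$ is odd, which is exactly the condition $p\in\Diff(m,\Nc)$. Thus $E_m^{(M)}(0,\mu;\Nc)=0$ iff some $\ell\in\Diff(m,\Nc)$ avoids $M$. When this happens and $|\Diff(m,\Nc)\setminus\{p\mid M\}|=1$, the Leibniz rule for differentiating the finite product $\prod_{p\nmid M}W_{m,p}$ leaves only one surviving term, namely the derivative of the vanishing factor at $\ell$ multiplied by the regular values at the other primes. The derivative of $\sum_{n=0}^{o_\ell(m)}(-1)^n p^{-sn}$ at $s=0$ equals $-\log\ell\cdot\sum_{n=0}^{o_\ell(m)}n(-1)^n=-\log\ell\cdot\tfrac{1+o_\ell(m)}{2}\cdot(-1)$, and combining with the $L$-factor at $\ell$ yields exactly $\tfrac{\log\ell}{L^{(M)}(1,\epsilon)}\rho^{(M\ell)}(m)\tfrac{1+o_\ell(m)}{2}$, which equals $\tfrac{\log\ell}{L^{(M)}(1,\epsilon)}\sum_{j\ge 1}\rho^{(M)}(m/\ell^j)$ by the definition of $\rho^{(M\ell)}$. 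The main bookkeeping obstacle is simply tracking the constants $\dd, t, u$ and the factor $|D|^{1/2}$ from the Haar normalization; everything else is a routine unramified local computation.
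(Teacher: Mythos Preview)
Your approach is correct and is essentially what the paper does: the paper does not give a self-contained proof but simply cites \cite{KRYtiny} and \cite[Theorem~2.4]{KY10}, noting that the formulas also follow by specializing the local Whittaker computations of Section~\ref{subsec:Whitt} (Proposition~\ref{prop4.5}, case (3) with $o(\kappa)=o(\Delta)=0$) at the unramified primes $p\nmid M$. Your Euler-product assembly, class-number-formula step, and Leibniz-rule argument for the derivative are exactly the intended route.

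One minor correction: your displayed local formula has a spurious denominator $1-p^{-2s-1}$. Specializing Proposition~\ref{prop4.5}(3) with $o(\kappa)=o(\Delta)=0$ gives simply
\[
W_{m,p}(s,0;\Nc)=\bigl(1-\epsilon(p)p^{-s-1}\bigr)\sum_{n=0}^{o_p(m)}(\epsilon(p)p^{-s})^n,
\]
with no extra zeta factor; this is what produces $L^{(M)}(s+1,\epsilon)^{-1}$ upon taking the product. Since you flagged constant-tracking as the main bookkeeping issue, this is easy to fix and does not affect the argument.
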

\begin{remark}
  \label{rmk:muindep}
  Since $E^{(M)}_m(s, \mu; \Nc )$ only depends on $\kay$ and $M$ when $6D \mid M$, we will simply write $E^{(M)}_m(s) = E^{(M)}_m(s, \mu; \Nc )$ in that case.
\end{remark}

 \subsection{Small CM value formula} Let $Z=(z_1, z_2) $ be a small CM point on $X_\dd'$ as in section \ref{subsec:smallCM}, and let $U_\R = U \otimes \R$ be the associated negative plane as in \eqref{eq:Uab}. Let $Z(U) = \{ Z^\pm \} \times T(\Q)  \backslash T(\A_f)/K_{\dd, T}$ be the associated CM cycle with $T$ the torus in \eqref{eq:torus2} and $K_{\dd, T} \subset T(\Ab_f)$ the compact subgroup in Lemma \ref{lemma:KdT}.

 Suppose $D_j$ satisfy \eqref{eqd} and $\af_j = a_j\Z[z_j]$ are $\OO_{D_j}$-ideals with $z_1, z_2$ chosen as in Lemma~\ref{lem:smallCM}.
 Then 
 $$
 \lp \ff_2(z_1)^{24/s} -  \ff_2(z_2)^{24/s} \rp^{s}
 =
  \lp f(\af_1)^{24/s} - f(\af_2)^{24/s} \rp^{s}
  $$
  only depends on the classes $\Ac_j$ of $\af_j$ in $\Cl(D_j)$ for every $s \mid 24$.
  In this case, we denote
  \begin{equation}
    \label{eq:Psidaf}
    \Psi_\dd(\Ac_1, \Ac_2) := \Psi_\dd(z_1, z_2) =
    \prod_{s \mid \dd}   \lp \ff_2(z_1)^{24/s} -  \ff_2(z_2)^{24/s} \rp^{s \cdot \mu(\dd/s)}.
  \end{equation}
  for $\dd \mid 24$.
  Using Lemma \ref{lemma:KdT}, we can express the value of $\Psi_\dd$ at the small CM point $Z(U)$ in the following way.
  \begin{lemma}
\label{lemma:classgp}
    Let the notation be as above.
    Suppose $\Ac_1 \neq \Ac_2$, which is automatic if $D_1 \neq D_2$, then we have
$$
 \sum_{\Bc \in \Cl(D) } \log| \Psi_\dd (\Bc \Ac_1, \Bc \Ac_2)
|=\frac{1}{2}\log |\Psi_\dd(Z(U))|,
$$
where $\Bc \in \Cl(D)$ is viewed as a class in $\Cl(D_j)$ via the natural surjection $\Cl(D) \to \Cl(D_j)$.
\end{lemma}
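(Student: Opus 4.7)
The plan is to unfold the CM cycle $Z(U)$ via Shimura reciprocity, and then isolate the factor $\tfrac12$ using complex conjugation on the two components of $\mathbb{D}$.

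First I would apply Lemma~\ref{lemma:KdT}, which gives $K_{\dd,T} = \hat{\OO}_D^\times$, to obtain the canonical bijection
$$
T(\Q)\backslash T(\A_f)/K_{\dd,T} \;\xrightarrow{\sim}\; \Cl(D)
$$
coming from the adelic description of the class group. Combining the embedding $\iota_Z: T \hookrightarrow H$ of Lemma~\ref{lem:Embedding2} with the natural surjections $\Cl(D) \twoheadrightarrow \Cl(D_j)$ and the main theorem of complex multiplication, the $T(\A_f)$-orbit of the component $Z^+ = (z_1, z_2) \in \H^2$ inside $X_\dd'$ is exactly the set of CM points $\{(\Bc\Ac_1, \Bc\Ac_2) : \Bc \in \Cl(D)\}$, where each $\Bc$ acts on $\Ac_j \in \Cl(D_j)$ through the surjection specified in the statement. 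This half of the CM cycle therefore contributes $\sum_{\Bc \in \Cl(D)} \log|\Psi_\dd(\Bc\Ac_1, \Bc\Ac_2)|$ to $\log|\Psi_\dd(Z(U))|$.

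Next I would analyse the second component $Z^- \in \mathbb{D}^-$. Under the isomorphism $\H^2 \cup (\H^-)^2 \cong \mathbb{D}$ via $w_\dd$, the component $Z^-$ corresponds to $(\overline{z_1}, \overline{z_2})$, so its $T(\A_f)$-orbit is the complex conjugate of the orbit through $Z^+$. Since $\Psi_\dd$ is a Borcherds lift of a form with rational (in fact integral) principal part by Theorem~\ref{theo:BorcherdsLifting}, its Fourier expansion at $\infty$ has real coefficients, whence $|\Psi_\dd(\overline{z_1}, \overline{z_2})| = |\Psi_\dd(z_1, z_2)|$. Consequently the $Z^-$-orbit contributes the same sum as the $Z^+$-orbit, and putting the two halves together gives
$$
\log|\Psi_\dd(Z(U))| \;=\; 2\sum_{\Bc \in \Cl(D)} \log|\Psi_\dd(\Bc\Ac_1, \Bc\Ac_2)|,
$$
which is the asserted identity.

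The main subtlety, and the reason for the hypothesis $\Ac_1 \neq \Ac_2$, is to ensure each logarithm is finite. By Theorem~\ref{theo:BorcherdsLifting} the divisor of $\Psi_\dd$ is supported on the translated diagonals $X_\dd^\Delta(j)$, and a CM pair $(z_1', z_2')$ with discriminants $D_1, D_2$ lies on $X_\dd^\Delta(j)$ exactly when $\gamma_1 T^j z_1' = \gamma_2 z_2'$ for some $\gamma_i \in \Gamma^\dd$. This forces $D_1 = D_2$ and, since $\SL_2(\Z)$-equivalence and translation by $T$ preserve the associated ideal class, forces the two classes to coincide. Hence $\Psi_\dd(\Bc\Ac_1, \Bc\Ac_2) = 0$ would require $\Ac_1 = \Ac_2$, contradicting the hypothesis; when $D_1 \neq D_2$ this obstruction is automatic, which explains the parenthetical remark in the statement.
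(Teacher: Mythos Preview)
Your proposal is correct and follows exactly the approach the paper indicates: the paper states the lemma immediately after the sentence ``Using Lemma~\ref{lemma:KdT}, we can express the value of $\Psi_\dd$ at the small CM point $Z(U)$ in the following way'' and does not give a separate proof, so your argument is a faithful fleshing-out of that hint. Your identification of $T(\Q)\backslash T(\A_f)/K_{\dd,T}$ with $\Cl(D)$ via Lemma~\ref{lemma:KdT}, the unfolding of the $Z^{+}$-orbit via $\iota_Z$ and Shimura reciprocity, the treatment of $Z^{-}$ by complex conjugation (which is justified since $\ff_2$ has real Fourier coefficients, hence so does the explicit product in Remark~\ref{rmk:Psid}), and the nonvanishing check using the divisor description in Theorem~\ref{theo:BorcherdsLifting} are all in line with the paper's framework.
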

The following theorem belongs to Schofer \cite{Schofer} (see also \cite[Theorem 1.2]{BY09} for generalization)

\begin{theorem}
\label{thm:Schofer}
  Let the notation be as above, and let
$$
\theta_{\mathcal P}(\tau) = \sum_{\substack{ \mu \in \mathcal P'/\mathcal P\\m\geq0}} a_{\mathcal P}(m, \mu) q^m \phi_\mu
$$
be the holomorphic weight $1$ vector-valued modular form of $(\SL_2(\Z),  \omega_{\mathcal P})$ associated to $\mathcal P$, where
$$
a_{\mathcal P} (m, \mu) := |\Pc_{m, \mu}|=|\{ \alpha \in  \mu + \Pc:\,  Q_\Pc(\alpha)=m\}|.
$$
Suppose $\Ac_1 \neq \Ac_2$, then we have
\begin{equation}
  \label{eq:Schofer}
- \sum_{\Ac \in \Cl(\OO_D) } \log| \Psi_\dd ({\Ac \Ac_1}, {\Ac \Ac_2})|
  = \frac{1}{4}h_D \mathrm{CT}\langle\tilde F_\dd, \theta_{\mathcal P} \mathcal E_{\mathcal N}\rangle
  = \frac{1}{4}h_D
  \sum_{\substack{m, \tm \in \Q,\\  m + \tm ={1}\\m,\tm\geq0}}  C_\dd(\tm, m),
    \end{equation}
    where $h_D $ is the ring class number of $\OO_D$ given by \cite[Theorem 7.24]{cox}
    \begin{equation}
      \label{eq:hD}
      \frac{h_D}{\sqrt{|D|}} = \frac{h_\kay}{ \sqrt{|D_\kay|}} \prod_{p \mid D} L_p(1, \epsilon)^{-1}
    \end{equation}
    and
    \begin{equation}
      \label{eq:Cdm}
      \begin{split}
          C_\dd(\tm, m)
          &:=
 \sum_{\mu \in \mathcal N'_1/\dd \mathcal N,~  \tmu \in  \mathcal P'_1/\dd \mathcal P}
\phi_\dd\lp \frac{\tmu}{\dd}, \frac{\mu}{\dd}\rp    a_{\mathcal P_\dd}\lp \frac{\tm}{\dd},  \frac{\tmu}{\dd}\rp \kappa_{\mathcal N_\dd}\lp \frac{m}{\dd}, \frac{\mu}{\dd}\rp.
      \end{split}
    \end{equation}
\end{theorem}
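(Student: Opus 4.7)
The plan is to derive this formula by applying the small CM value formula of Schofer \cite{Schofer} (in its vector-valued reformulation from \cite[Theorem 4.7]{BY09}) to the weakly holomorphic input $\tilde F_\dd$ from Theorem~\ref{theo:BorcherdsLifting} and the small CM cycle $Z(U)$ attached to the splitting of Proposition~\ref{prop:Lattice}.

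First, I would observe that by Theorem~\ref{theo:BorcherdsLifting} the function $-\log|\Psi_\dd|$ equals, up to standard normalization, the regularized theta lift of $\tilde F_\dd$. Combined with Lemma~\ref{lemma:classgp}, evaluating this lift at the CM cycle $Z(U)$ is the same, up to the explicit constant $\frac12$, as the sum on the left of \eqref{eq:Schofer}. It therefore suffices to compute the pullback of the regularized theta integral to the CM cycle $Z(U)$.

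Second, I would invoke the rational splitting $V = U^\perp \oplus U$ from Proposition~\ref{prop:Lattice}, which under the identifications \eqref{eq:latticeimage} realizes $\Pc = L_\dd \cap U^\perp$ and $\Nc = L_\dd \cap U$ as integral sublattices of $L_\dd$. Under this splitting the Siegel theta kernel on $V$ factors as a tensor product: the positive direction contributes the holomorphic theta series $\theta_\Pc$ of weight $1$, while the negative direction contributes a non-holomorphic modular form of weight $1$ whose holomorphic part is the derivative at $s = 0$ of the incoherent Eisenstein series $E_\Nc(\tau, s)$, namely $\mathcal E_\Nc$ of \eqref{eq:Ec-hol}. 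The incoherence of this Eisenstein series, and in particular the vanishing of $E_\Nc$ at $s = 0$, is what forces only the derivative to appear; this is precisely the mechanism underlying Schofer's theorem.

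Third, I would unfold the regularized CM pullback as the constant term of the weight-zero pairing $\langle \tilde F_\dd, \theta_\Pc \mathcal E_\Nc \rangle$, then expand using the explicit description of $\tilde F_\dd$ from Theorem~\ref{theo:BorcherdsLifting} and the formula \eqref{eq:ufd} for $\phi_\dd$. The coset index sets arising from the $T$-translates $\tfrac{1}{\dd} T^j \Gamma^\dd/L$ in $\phi_\dd$ then match the index sets $\mu/\dd, \tmu/\dd$ in $C_\dd(\tm, m)$, while the rescaling $m \mapsto m/\dd$ in the arguments of $a_{\Pc_\dd}$ and $\kappa_{\Nc_\dd}$ reflects the identity $Q_\dd = \dd Q_1$.

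The main obstacle I expect is matching the normalizations between the Schofer-type formula of \cite{BY09} (stated for the maximal order $\OO_\kay$) and the non-maximal setting $\OO_D$ of conductor $t$. This accounts for the appearance of the ring class number $h_D$ in place of $h_\kay$, governed by \eqref{eq:hD}, and for the Haar measure convention adopted before \eqref{eq:kappaN2}, which introduces the factor $(t\dd)^{-1}$ in $\kappa_\Nc$. A subsidiary bookkeeping issue is the interplay of the $\tfrac12$ from Lemma~\ref{lemma:classgp} with the symmetry $m \leftrightarrow \tm$ in the constant term of a weight-$1$ times weight-$1$ pairing, which combine with $h_D$ to produce the leading factor $\tfrac14 h_D$. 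Once these normalizations are aligned, the formula is a direct translation of Schofer's argument to our non-maximal, vector-valued setting.
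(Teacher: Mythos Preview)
Your approach is essentially the same as the paper's: apply Schofer's small CM value formula (as in \cite{Schofer} or \cite{BY09}) to the Borcherds lift $\Psi_\dd$ of $\tilde F_\dd$ at the cycle $Z(U)$, combine with Lemma~\ref{lemma:classgp}, and then expand the constant term using the principal part of $\tilde F_\dd$ from Theorem~\ref{theo:BorcherdsLifting}.

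One point you have not addressed, which the paper handles explicitly, is the constant term of $\tilde F_\dd$ when $\dd = 1$. In that case $\tilde F_1$ has the additional constant contribution $24\,\phi_{L + \smat{0}{1/2}{0}{0}}$, and for the second equality in \eqref{eq:Schofer} to hold one must check that the constant term of $\theta_\Pc\,\mathcal E_\Nc$ at this particular coset vanishes; the paper does this using Proposition~\ref{prop:Lattice}. Without that check, the formula \eqref{eq:Cdm} would in principle acquire an extra term. Your remark about a ``symmetry $m \leftrightarrow \tm$'' contributing to the factor $\tfrac14 h_D$ is also not how the constant arises: the $\tfrac12$ from Lemma~\ref{lemma:classgp} combines with the $\tfrac12 h_D$ coming from the degree of $Z(U)$ in Schofer's normalization (via Lemma~\ref{lemma:KdT}), not from any symmetry in the Fourier expansion.
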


\begin{proof}
  By Theorem \ref{theo:BorcherdsLifting},   $\Psi_{d}(z_{1},z_{2})$ is a Borcherds lift of $\tilde F_d$ on $X_{d}'$. Then applying \cite[Corollary~1.2]{Schofer} or \cite[Theorem~1.3]{BY09} to $\Phi_{d}(z_{1},z_{2})$ over the CM cycle $Z(U)$ and using the isomorphisms $\mathcal{N}_{1}'/d\mathcal{N}\cong \mathcal{N}'/\mathcal{N}$ and $\mathcal{P}_{1}'/d\mathcal{P}\cong\mathcal{P}'/\mathcal{P}$, one obtains the first equality in \eqref{eq:Schofer}.
  The second follows from the description of the principal part of $\tilde F_\dd$ in Theorem \ref{theo:BorcherdsLifting}. Note that the constant term of $\tilde F_\dd$ is trivial unless $\dd = 1$. In that case, one can use Proposition \ref{prop:Lattice} to check that the constant term of $\theta_\Pc \mathcal E_\Nc$ at the coset $L + \smat0{1/2}00$ is trivial. So the only
 the first Fourier coefficient of $\theta_\Pc \mathcal E_\Nc$  contributes to  $ \mathrm{CT}\langle\tilde F_\dd, \theta_{\mathcal P} \mathcal E_{\mathcal N}\rangle$.
\end{proof}


\begin{lemma}
  \label{lemma:Cdmmprod}
  For any $\dd \mid 24$ and $\tm, m \in \Qb_{\ge 0}$ such that $\tm + m = 1$, the quantity $C_\dd(\tm, m)$ defined in \eqref{eq:Cdm} can be expressed as
  \begin{equation}
    \label{eq:keyeq}
    C_\dd(\tm, m) =
-2\pi \frac{d}{ds} \lp     E^{(6D)}_{m}\lp s\rp
\sum_{\substack{\alpha \in \frac{1}{\sqrt{D}}\taf_0\\ \frac{t}{a}\Nm(\alpha) = \tm}}
    \prod_{p \mid 6D}\delta_p\lp s, m, {\alpha}{}; \dd_p \rp \rp \mid_{s = 0}
    ,
  \end{equation}
  where 
  for $p \mid 6D$
  \begin{equation}
    \label{eq:deltas}
    \begin{split}
  \delta_p(s, m, {\alpha}; \dd_p) &:=
p^{-o_p(t\dd)}    \sum_{\mu \in \frac{1}{ \sqrt{D}} \af_0 /\dd_p \af \otimes \Zb_p}    \phi_{\dd_p, p}\lp \frac{{\alpha}}{\dd_p}, \frac{\mu}{\dd_p}\rp
  W_{m/\dd_p, p}\lp s, \frac{\mu}{\dd_p}; \Nc_{\dd_p}\rp.
    \end{split}
  \end{equation}
  In particular for $m > 0$, $C_\dd(\tm, m) = 0$ if $\#\Diff(m, \Nc_1) \neq 1$.
\end{lemma}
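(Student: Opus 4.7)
The plan is to expand $C_\dd(\tm, m)$ into its local factors over primes, identify the contribution at primes $p \nmid 6D$ with the partial Eisenstein coefficient $E^{(6D)}_m(s)$, and repackage the contribution at primes $p \mid 6D$ as $\delta_p$.

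\textbf{Step 1: Unfold the theta coefficient via the isometry.} By Proposition~\ref{prop:Lattice}, $i^+_\dd$ realizes an isometry $(\kay_{D_0}, \tfrac{\dd t}{a}\Nm) \cong (U^\perp, Q_\dd)$ sending $\tilde\af$ onto $\Pc_\dd$ with dual lattice $\tfrac{1}{2\dd t \sqrt{D_0}}\tilde\af = \tfrac{1}{2\dd \sqrt{D}}\tilde\af$. Under this identification, an element $\tmu \in \mathcal{P}_1'/\dd\mathcal{P}$ with $Q_\dd(\tmu/\dd) = \tm/\dd$ corresponds to an element $\tilde\lambda \in \kay_{D_0}$ with $\tfrac{\dd t}{a}\Nm(\tilde\lambda) = \tm/\dd$, lying in a prescribed coset. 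Rescaling by setting $\alpha = \dd \tilde\lambda \in \frac{1}{\sqrt D}\tilde\af_0$ converts the condition $Q_\dd(\tmu/\dd)=\tm/\dd$ into $\frac{t}{a}\Nm(\alpha) = \tm$, and the sum over $\tmu$ becomes a sum over such $\alpha$.

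\textbf{Step 2: Factor $\kappa_{\Nc_\dd}$ over primes.} By the defining formula \eqref{eq:kappaN2}, $\kappa_{\Nc_\dd}(m/\dd, \mu/\dd)$ is $-2\pi(t\dd)^{-1}$ times the derivative at $s=0$ of $\prod_{p<\infty} W_{m/\dd, p}(s, \mu/\dd; \Nc_\dd)$. Split this product according to whether $p \mid 6D$ or not, and use the corresponding factorization of $\phi_\dd$ from \eqref{eq:phid23}.

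\textbf{Step 3: Evaluate at primes $p \nmid 6D$.} At such primes $\phi_{\dd, p}$ is the characteristic function of $L \otimes \Z_p = M_2(\Z_p)$, so the sum over $\mu_p, \tmu_p$ collapses to the trivial residue and we may apply Proposition~\ref{prop:unramified} together with Remark~\ref{rmk:muindep}. The resulting product is precisely
\[
\prod_{p \nmid 6D \infty} W_{m/\dd, p}(s, 0; \Nc_\dd) = E^{(6D)}_m(s),
\]
which is independent of $\mu$ and of $\tmu$ at those primes.

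\textbf{Step 4: Repackage at primes $p \mid 6D$.} At these primes, collect the remaining sum over the $p$-components of $\mu$, together with the local Whittaker factor and the values of $\phi_{\dd_p, p}(\tmu/\dd, \mu/\dd)$. Using the local description of the residue $\frac{1}{\sqrt D}\tilde\af_0/\dd_p \tilde\af$ at $p$ and the fact that $\phi_{\dd, p} = \phi_{\dd_p, p}$ by \eqref{eq:phid23}, one identifies the resulting sum with $\delta_p(s, m, \alpha; \dd_p)$ defined in \eqref{eq:deltas}. The constant $p^{-o_p(t\dd)}$ accounts for the discrepancy between the Haar measure on $\Nc_p$ (absorbing the factor $|\kappa|_p$) and the one on $\OO_{4D_0, p}$; taken together over $p \mid 6D$ this exactly recovers the global prefactor $(t\dd)^{-1}$ from \eqref{eq:kappaN2}.

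\textbf{Step 5: Combine and conclude the vanishing statement.} Interchanging the sum over $\alpha$ and the derivative at $s=0$ yields \eqref{eq:keyeq}. For the final assertion, observe that $E^{(6D)}_m(s)$ carries one vanishing Euler factor at each $p \in \Diff(m, \Nc_1)\setminus\{p\mid 6D\}$; if $\#\Diff(m, \Nc_1) \geq 2$, the partial product $E^{(6D)}_m(s)$ vanishes to order $\geq 2$ at $s=0$, while the finite product of $\delta_p(s, m, \alpha; \dd_p)$ is holomorphic there, forcing the derivative to vanish.

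The main obstacle is the bookkeeping in Step~1 and Step~4: matching the successive dualities between $\Pc'/\Pc$, the ideal residues $\tilde\af'/\tilde\af$, and the scaling factors $\tfrac{1}{\sqrt D}$ vs.\ $\tfrac{1}{\sqrt{D_0}}$ (which differ by $t$); and then verifying that the global normalization $(t\dd)^{-1}$ in \eqref{eq:kappaN2} is exactly distributed among the factors $p^{-o_p(t\dd)}$ in the definition of $\delta_p$.
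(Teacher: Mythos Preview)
Your approach matches the paper's: expand $C_\dd(\tm,m)$ using \eqref{eq:kappaN2}, factor the Whittaker product over primes, collapse the $p\nmid 6D$ contribution to $E^{(6D)}_m(s)$, and absorb the remaining $p\mid 6D$ factors into the $\delta_p$. The paper carries out exactly these steps, with the passage from $\tfrac{1}{2\sqrt D}\taf$ to $\tfrac{1}{\sqrt D}\taf_0$ justified by the support condition \eqref{eq:supp} (rather than Proposition~\ref{prop:Lattice} directly), and the reduction $W_{m/\dd,p}(s,\mu/\dd;\Nc_\dd)=W_{m/\dd_p,p}(s,\mu/\dd_p;\Nc_{\dd_p})$ read off from the integral \eqref{eq:kappaN2}.

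There is one genuine gap in your Step~5. You claim that if $\#\Diff(m,\Nc_1)\geq 2$ then $E^{(6D)}_m(s)$ vanishes to order $\geq 2$ at $s=0$. This does not follow: $E^{(6D)}_m$ only sees the primes of $\Diff(m,\Nc_1)$ \emph{away from} $6D$, and it can happen that two (or all three) of the primes in $\Diff$ divide $6D$, in which case $E^{(6D)}_m(0)\neq 0$. The correct argument is that for \emph{every} $p\in\Diff(m,\Nc_1)$ the local factor $W_{m/\dd_p,p}(0,\mu/\dd_p;\Nc_{\dd_p})$ vanishes (since $\Diff(m/\dd_p,\Nc_{\dd_p})=\Diff(m,\Nc_1)$, the underlying quadratic space being scaled by the square $\dd_p^2$), so the corresponding $\delta_p(0,m,\alpha;\dd_p)$ vanishes as well. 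Hence the full product $E^{(6D)}_m(s)\prod_{p\mid 6D}\delta_p(s,\ldots)$ has a zero of order at least $\#\Diff(m,\Nc_1)\geq 3$ at $s=0$, and its first derivative is zero.
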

\begin{proof}
  First of all, we can apply Remark \ref{rmk:Psid} and \eqref{eq:kappaN2} to write
  \begin{align*}
    C_\dd(\tm, m)
    &=   \sum_{\mu^{} \in \frac{1}{2\sqrt{D}}\af/\dd \af}
      \sum_{\substack{\alpha \in \frac{1}{2\sqrt{D}} \taf\\ \frac{t}{a}\Nm(\alpha) = \tm}}
        \phi_{\dd}\lp \frac{\alpha}{\dd}, \frac{\mu}{\dd}\rp
    \kappa^{}_{\mathcal N_\dd}\lp \frac{m}{\dd}, \frac{\mu}{\dd}\rp\\
    &=   -2\pi (t\dd)^{-1} \frac{d}{ds} \lp
      \sum_{\substack{\alpha \in \frac{1}{2\sqrt{D}} \taf\\ \frac{t}{a}\Nm(\alpha) = \tm}}
    \prod_{p < \infty}
     \sum_{\mu^{} \in \frac{1}{2\sqrt{D}}\af/\dd \af \otimes \Zb_p}
    \phi_{\dd, p}\lp \frac{\alpha}{\dd}, \frac{\mu}{\dd}\rp
    W_{m, p}\lp s, \frac{\mu}{\dd}; \Nc_\dd\rp
    \rp \mid_{s = 0}
  \end{align*}
  If $p \nmid 6D$, then $\frac{1}{2\sqrt{D}}\af/\dd \af \otimes \Zb_p$ is trivial, $W_{m, p}(s, \mu; \Nc_\dd)$ is independent of $\mu$ and we obtain
  $$
    \prod_{p \nmid 6D}
     \sum_{\mu^{} \in \frac{1}{2\sqrt{D}}\af/\dd \af \otimes \Zb_p}
    \phi_{\dd, p}\lp \frac{\alpha}{\dd}, \frac{\mu}{\dd}\rp
    W_{m/\dd, p}(s, \mu; \Nc_\dd)
    = E^{(6D)}_m(s).
    $$
    If $p \mid 6D$,
    then $\frac{1}{2\sqrt{D}}\af/\dd \af \otimes \Zb_p =\frac{1}{2\sqrt{D}}\af/ \dd_p \af \otimes \Zb_p    $ and
    $\phi_{\dd, p}(\tfrac{\alpha}{\dd}, \frac{\mu}{\dd}) = \phi_{\dd_p, p}(\frac{\alpha}{\dd_p}, \frac{\mu}{\dd_p})$ by Remark \ref{rmk:Psid} as $\dd \mid 24$.
    Furthermore, the definition of $W_{m, p}$ in \eqref{eq:kappaN2} directly implies that
    $W_{m/\dd, p}(s, \mu/\dd; \Nc_\dd) = W_{m/\dd_p, p}(s, \mu/\dd_p; \Nc_{\dd_p})$.
Finally from \eqref{eq:supp}, we can replace $\frac{\af}{2}$ and $\frac{\taf}2$ by ${\af_0}$ and ${\taf_0}$ respectively. 
Putting these together then finishes the proof.
 \end{proof}
    For convenience, we define
    \begin{equation}
  \label{eq:delta6}
  \begin{split}
      L_p(s+1, \epsilon)
    \frac{\delta_p(s, m, \alpha; \dd_p)}{L_p(s, \epsilon)^{\delta_{m = 0}}}
    &= 
    \delta_p(m, \alpha; \dd_p) + 
    \delta_p'(m, \alpha; \dd_p) (\log p \cdot s) + O(s^2)
  \end{split}
\end{equation}
when $p \mid 6$, and
\begin{equation}
  \label{eq:deltaD}
  \begin{split}
          p^{o_p(D)/2}
    \frac{\delta_p(s, m, {\alpha}; 1)}{L_p(s, \epsilon)^{\delta_{m = 0}}}
&=
  \delta_p(-D_0 t m , \sqrt{D}{\alpha})
  +
    \delta_p'(-D_0 t m,\sqrt{D}{\alpha})
  (\log p \cdot s) + O(s^2)
  \end{split}
\end{equation}
when $p\mid D$.
%
We can now expand the expression for $C_\dd(\tm, m)$ in \eqref{eq:keyeq}
in terms of the quantities in \eqref{eq:delta6} and \eqref{eq:deltaD} using product rule as follows.
  \begin{proposition}
    \label{prop:Diff1}
For $m \ge 0$, define
    \begin{equation}
      \label{eq:Cdmm}
      \begin{split}
C_{\dd, \ell}(\tm, m)
:=        -\frac{2}{{h_D} }
&\rho^{(6D\ell)}(m)
\sum_{\substack{\tilde{\alpha} \in \taf_0\\ \frac{1}{a}\Nm(\tilde{\alpha}) = -D_0t\tm}}
\prod_{p \mid 6,~ p \neq \ell} \delta_p\lp m, \frac{\tilde{\alpha}}{\sqrt{D}}; \dd_p\rp\\
&\cdot \prod_{p \mid D,~ p \neq \ell} \delta_p(-D_0 t m, \tilde{\alpha})
      \begin{cases}
\rho'_\ell(m),        & \text{ if } \ell \nmid 6D,\\
\delta'_{\ell}(-D_0 tm , \tilde{\alpha}), & \text{ if } \ell \mid D, \\
\delta'_{3}(m, \tilde{\alpha}/\sqrt{D}; \dd_3), & \text{ if } \ell = 3.
      \end{cases}
      \end{split}
    \end{equation}
    where 
    $\delta_p$ and $\delta_\ell'$ are defined in \eqref{eq:delta6} and \eqref{eq:deltaD}.
    Then
    \begin{equation}
      \label{eq:Cdtmm}
      C_{\dd}(\tm, m) = \sum_{\ell < \infty} C_{\dd, \ell}(\tm, m) \log \ell.
    \end{equation}
  \end{proposition}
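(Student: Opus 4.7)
The plan is to apply the product rule to the expression
\[
C_\dd(\tm, m) = -2\pi \left.\frac{d}{ds}\right|_{s=0} \lp E^{(6D)}_m(s) \cdot \sum_{\alpha}\prod_{p \mid 6D} \delta_p(s, m, \alpha; \dd_p)\rp
\]
provided by Lemma \ref{lemma:Cdmmprod}. That lemma already shows $C_\dd(\tm, m)$ vanishes unless $\#\Diff(m,\Nc_1) = 1$, so I assume $\Diff(m,\Nc_1) = \{\ell\}$. The key observation is that exactly one factor in this product vanishes at $s = 0$, namely the factor associated to the prime $\ell$; the product rule therefore collapses to a single surviving term which must supply the lone $\log \ell$.

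I first treat the case $\ell \nmid 6D$. Proposition \ref{prop:unramified} then gives $E^{(6D)}_m(0) = 0$, since $\ell \in \Diff$ forces $\rho_\ell(m) = 0$ and hence $\rho^{(6D)}(m) = 0$, whereas each $\delta_p(0, m, \alpha; \dd_p)$ for $p \mid 6D$ is nonzero. Thus
\[
C_\dd(\tm, m) = -2\pi \, E^{(6D), \prime}_m(0) \sum_\alpha \prod_{p \mid 6D} \delta_p(0, m, \alpha; \dd_p).
\]
The explicit derivative $E^{(6D), \prime}_m(0) = \frac{\log \ell}{L^{(6D)}(1,\epsilon)} \rho^{(6D\ell)}(m)\frac{1+o_\ell(m)}{2}$ from Proposition \ref{prop:unramified}, combined with $\rho'_\ell(m) = 1 + o_\ell(m)$ for $\ell \in \Diff$, already produces the $\log \ell$ and the coefficient $\rho^{(6D\ell)}(m)\rho'_\ell(m)$. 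The remaining local values at $s = 0$ are computed via \eqref{eq:delta6} for $p \mid 6$ (producing $\delta_p(m, \tilde\alpha/\sqrt{D}; \dd_p)/L_p(1, \epsilon)$) and via \eqref{eq:deltaD} for $p \mid D$ (producing $\delta_p(-D_0 tm, \tilde\alpha)/p^{o_p(D)/2}$), after the change of variable $\tilde\alpha = \sqrt{D}\,\alpha$, which turns the sum over $\alpha \in \tfrac{1}{\sqrt{D}}\taf_0$ into a sum over $\tilde\alpha \in \taf_0$ with $\Nm(\tilde\alpha)/a = -D_0 t \tm$.

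Next I treat the case $\ell \mid 6D$. Under \eqref{eqd}, $2$ splits in $\kay$ and $3 \nmid D$; ramified primes are never in $\Diff$, so either $\ell = 3$ (with $3$ inert in $\kay$) or $\ell \mid t$ (with $\ell$ inert). In both subcases $E^{(6D)}_m(0) \neq 0$ (computable from Proposition \ref{prop:unramified}), whereas $\delta_\ell(0, m, \alpha; \dd_\ell) = 0$, this being exactly the content of $\ell \in \Diff$ when read through \eqref{eq:delta6} or \eqref{eq:deltaD}. The product rule now picks out the $s$-derivative of the single vanishing factor, and \eqref{eq:delta6} or \eqref{eq:deltaD} supply $\log \ell$ together with $\delta'_3(m,\tilde\alpha/\sqrt{D};\dd_3)$ (when $\ell = 3$) or $\delta'_\ell(-D_0 tm, \tilde\alpha)$ (when $\ell \mid D$), matching the case distinctions in \eqref{eq:Cdmm}.

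The last step is to collect constants. The leading $-2\pi$ from \eqref{eq:keyeq}, the factors $\prod_{p\mid 6D} L_p(1, \epsilon)$ and $\sqrt{|D_\kay|}/(\pi h_\kay)$ coming from $E^{(6D)}_m(0)$, the factors $L_p(1, \epsilon)^{-1}$ (from \eqref{eq:delta6}) and $p^{-o_p(D)/2}$ (from \eqref{eq:deltaD}), the analytic class number formula $L(1, \epsilon) = \pi h_\kay/\sqrt{|D_\kay|}$ (valid for $D_\kay < -4$, which holds by \eqref{eqd}), and the ring-class-number identity \eqref{eq:hD} together telescope into the clean coefficient $-2/h_D$ common to all three cases. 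The main obstacle I anticipate is precisely this bookkeeping: tracking every $\sqrt{|D|_p}$, every $L_p(1, \epsilon)^{\pm 1}$, and the factor $\tfrac{1}{2}$ hidden in the $\frac{1+o_\ell(m)}{2}$ of Proposition \ref{prop:unramified} so that the constants collapse exactly to $-2/h_D$. A minor secondary verification is that in case $\ell \mid 6D$ one has $\rho^{(6D\ell)}(m) = \rho^{(6D)}(m)$, so the $\rho^{(6D\ell)}$ appearing in \eqref{eq:Cdmm} is consistent with the $\rho^{(6D)}$ appearing in the value $E^{(6D)}_m(0)$.
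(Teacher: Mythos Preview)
Your approach is correct and is exactly what the paper does: apply the product rule to the expression from Lemma~\ref{lemma:Cdmmprod}, using Proposition~\ref{prop:unramified} for the away-from-$6D$ factor $E^{(6D)}_m(s)$ and the definitions \eqref{eq:delta6}, \eqref{eq:deltaD} for the local factors at $p\mid 6D$. The paper's own proof is the single sentence ``This follows directly from Proposition~\ref{prop:unramified} and Lemma~\ref{lemma:Cdmmprod}.''

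One small correction that does not affect your argument: the claim ``ramified primes are never in $\Diff$'' is false. Primes $p\mid D_0$ can lie in $\Diff(m,\Nc_1)$ whenever $\epsilon_p(\kappa m)=-1$; this case occurs throughout the paper (see e.g.\ \eqref{ddn2} and item~(4) of Theorem~\ref{thm:YZconj}). Your case split $\ell=3$ versus $\ell\mid D$ is still the right one, and your key assertion that $\delta_\ell(0,m,\alpha;\dd_\ell)=0$ holds for \emph{any} $\ell\mid D$ in $\Diff$ (ramified or inert), since every local Whittaker function $W_{m,\ell}(0,\mu;\Nc_1)$ in the sum \eqref{eq:deltas} vanishes. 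So simply drop the parenthetical about ramified primes and replace ``$\ell\mid t$ (with $\ell$ inert)'' by ``$\ell\mid D$''.
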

  \begin{proof}
    This follows directly from Propositon\ \ref{prop:unramified} and Lemma \ref{lemma:Cdmmprod}.
  \end{proof}

  The values of $\delta_p$ and $\delta'_p$ will be computed in Section \ref{sect:local}, from which it is clear that for fixed $\dd, \tm, m$, there is at most one $\ell$ such that $C_{\dd, \ell}(\tm, m) \neq 0$.

\section{Statement and Proof of Main Result}  \label{sect:main}

\subsection{The main formula} Now we are ready to state and  prove the main general formula of this paper. 
  
  \begin{theorem}
  \label{thm:main}
  Let $s \mid 24$ and $\Ac_i \in \Cl(D_i)$ with $D_i$ any discriminant satisfying \eqref{eqd} such that $\sqrt{D_1D_2} \in \Zb$.
  Denote $s' := \gcd(s, 3^{1 - (\tfrac{D}{3})})$, 
  $D_0 := \mathrm{gcd}(D_1, D_2) < 0$, $D := \mathrm{lcm}(D_1, D_2) = D_0 t^2$ for some $t \in \Nb$. 
  Recall that $\taf_0$ is the $\mathcal{O}_{D_0}$ integral ideal defined in \eqref{eq:af}, and represents the class of $\mathfrak{A}_1^{-1}\mathfrak{A}_2 \in \Cl(D_0)$. 
  Then for a prime number $ \ell$,

\begin{equation}
    \label{eq:main}
    \begin{split}
          &2\ord_{\ell} \lp \prod_{\Bc \in \Cl(D)} (f(\Ac_1 \Bc)^{24/s} - f(\Ac_2 \Bc)^{24/s}) \rp
          \\
          &=   \sum_{\substack{n, \tn\in\Z_{\ge 0} \\n + \tn = -D_0 t}}
     \sum_{\substack{r \mid (s/s_\ell),~ r>0\\ (s'/s'_\ell) \mid r\\ \frac{n\tn}{4r^2} \equiv 19 \bmod{\frac{s/s_\ell}{r}}}}
  \sum_{AB = 2r}
  \rho^{(D\ell)}\lp \frac{n}{A^2}\rp
 \sum_{\substack{\tilde{\alpha} \in \taf_0\\ \frac{\Nm(\tilde{\alpha})}a = \frac{\tn}{B^2}}}
   \prod_{p \mid D \text{ prime,} p \neq \ell}   \delta_{p}(n, \tilde{\alpha})
    \cdot
  \begin{cases}
    \rho'_{\ell}(n),& \ell \nmid 3D,\\
    \delta'_{\ell}(n, \tilde{\alpha}),& \ell \mid D,\\
    \rho'_{3s_3}(n),& \ell = 3,
  \end{cases}
    \end{split}
\end{equation}
 where $\delta_p, \delta_p'$ are defined in \eqref{eq:delta6} and \eqref{eq:deltaD}, and $\rho^{(M)}(x)$ is given by (\ref{eq:rhom}).  In particular, this formula depends only on the class of $\taf_0$ in $\Cl(D)$.
  \end{theorem}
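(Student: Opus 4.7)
The plan is to combine the Borcherds decomposition \eqref{eq:Blift} with Schofer's small CM value formula (Theorem~\ref{thm:Schofer}), isolate the $\log\ell$-contribution via Proposition~\ref{prop:Diff1}, and then repackage the resulting double sum into the form of the right-hand side of \eqref{eq:main}. Evaluating \eqref{eq:Blift} at the CM points attached to $(\Ac_1\Bc,\Ac_2\Bc)$, using Lemma~\ref{lemma:ziinv}, and multiplying over $\Bc\in\Cl(D)$, yields
\begin{equation*}
\prod_{\Bc\in\Cl(D)}\bigl(f(\Ac_1\Bc)^{24/s}-f(\Ac_2\Bc)^{24/s}\bigr)^{s}=u\cdot\prod_{\dd\mid s}\prod_{\Bc\in\Cl(D)}\Psi_\dd(\Bc\Ac_1,\Bc\Ac_2)
\end{equation*}
with $u$ a root of unity. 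Since each $f(\Ac_i\Bc)$ is a unit, the product formula on $H_D$ converts $\ord_\ell$ of the left side into minus the archimedean log-height of the right side (divided by $\log\ell$), and the factor $2$ on the LHS of \eqref{eq:main} arises from the pair of complex conjugate embeddings of $K_D$. Applying Lemma~\ref{lemma:classgp} to each inner factor $\sum_\Bc\log|\Psi_\dd|$ yields $\tfrac12\log|\Psi_\dd(Z(U))|$, Theorem~\ref{thm:Schofer} expresses this as $-\tfrac14 h_D\sum_{m+\tm=1}C_\dd(\tm,m)$, and Proposition~\ref{prop:Diff1} extracts the coefficient $C_{\dd,\ell}(\tm,m)$ of $\log\ell$, whose normalization in \eqref{eq:Cdmm} cancels the Schofer $h_D$.

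The task then reduces to reorganizing $\sum_{\dd\mid s}\sum_{m+\tm=1}C_{\dd,\ell}(\tm,m)$ into the explicit sum on the right of \eqref{eq:main}. The substitution $n:=-D_0 t m$, $\tn:=-D_0 t \tm$ converts $m,\tm\ge 0$, $m+\tm=1$ into $n,\tn\in\Z_{\ge 0}$, $n+\tn=-D_0 t$, and the norm condition $\Nm(\tilde\alpha)/a=\tn/B^2$ on $\tilde\alpha\in\taf_0$ follows from the lattice identifications in Proposition~\ref{prop:Lattice}. The partition $AB=2r$ records how the index $2r$, coming from the fractional denominators at $p=2$ in the dual lattice, splits between the contributions of $\mu$ and $\tilde\alpha$ in \eqref{eq:Cdmm}. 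The mod-$(s/s_\ell)/r$ congruence $n\tn/(4r^2)\equiv 19$ will emerge from the support condition on the local Schwartz functions $\phi_{\dd_p,p}$ in \eqref{eq:u}, together with the $T$-eigenvalues $\chi_2(T)=\zeta_8^3$ and $\chi_3(T)=\zeta_3^{-1}$ from Proposition~\ref{prop:chival}. The divisibility $(s'/s'_\ell)\mid r$ reflects the definition $s'=\gcd(s,3^{1-(D/3)})$ and the vanishing of the $p=3$ contribution outside the corresponding $3$-adic regime.

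The hardest step will be the combinatorial collapse of $\sum_{\dd\mid s}$ into $\sum_{r\mid s/s_\ell}\sum_{AB=2r}$ with the indicator $\mathbf{1}_{n\tn/(4r^2)\equiv 19\bmod{(s/s_\ell)/r}}$. I plan to use the multiplicativity \eqref{eq:adjmult} of $a_\dd(j)=\prod_p a_{\dd_p}(j)$, the table of its nonzero values following \eqref{eq:ufd}, and a M\"obius-type identity of the form $\sum_{\dd\mid N}a_\dd(j)=N\cdot\mathbf{1}_{N\mid j}$ applied separately at $p=2$ and $p=3$, to exchange the dyadic and triadic sums for the single parameter $r$ together with an additive congruence. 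Once this identity is in place, the three cases $\rho'_\ell,\delta'_\ell,\rho'_{3s_3}$ for the final factor in \eqref{eq:main} correspond respectively to $\ell\nmid 3D$, $\ell\mid D$ and $\ell=3$ of the trichotomy in \eqref{eq:Cdmm}, and will follow from the local Whittaker computations to be carried out in Section~\ref{sect:local}. The closing claim that the formula depends only on $[\taf_0]\in\Cl(D)$ is immediate from Lemma~\ref{lemma:equivact} applied to the isometry $i_\dd$ of \eqref{eq:mapi}.
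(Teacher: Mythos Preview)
Your overall architecture matches the paper's: reduce to Borcherds products via \eqref{eq:Blift}, apply Schofer's formula (Theorem~\ref{thm:Schofer}) through Lemma~\ref{lemma:classgp}, and then extract the $\log\ell$--coefficient using Proposition~\ref{prop:Diff1}. That part is fine, and the substitution $n=-D_0tm$, $\tn=-D_0t\tm$ is exactly what the paper does.

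The gap is in what you call the ``hardest step''. You propose to collapse $\sum_{\dd\mid s}$ into the $r$--sum by applying a Ramanujan--sum identity $\sum_{\dd\mid N}a_\dd(j)=N\cdot\mathbf{1}_{N\mid j}$ to the coefficients $a_\dd(j)$ of $\phi_\dd$. This will not work directly: in $\delta_p(s,m,\alpha;\dd_p)$ the dependence on $\dd_p$ is not only through $a_{\dd_p}(j)$ but also through the cosets $\tfrac{1}{\dd_p}T^j\Gamma^{\dd}/L$ \emph{and} through the local Whittaker function $W_{m/\dd_p,p}(s,\mu/\dd_p;\Nc_{\dd_p})$, whose lattice changes with $\dd_p$. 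The paper does not separate ``combinatorics'' from ``Whittaker values''; instead it first computes $\delta_2(m,\alpha;\dd_2)$ explicitly for each $\dd_2\in\{1,2,4,8\}$ (Proposition~\ref{prop:d2contr}, using the splitting of $\tilde\phi_{2\dd_2}$ in Lemma~\ref{lemma:decompose2} and the local Whittaker tables), and only then sums over $\dd_2\mid s_2$ to obtain the closed form $s_2\sum_{r_2}\sum_{AB=2r_2}\rho_2(m/A^2)\mathbf{1}_{\taf}(\sqrt{D}\alpha/B)$ in Proposition~\ref{prop:s2contr}. The analogous work at $p=3$ is Proposition~\ref{prop:s3contr}. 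The congruence $n\tn/(4r^2)\equiv 19$ then arises by combining the $p=2$ condition $m\tm/(4r_2^2)\equiv 3\bmod s_2/r_2$ with the $p=3$ condition $m\tm/r_3^2\equiv 1\bmod s_3/r_3$ via CRT, not from the $T$--eigenvalue of $\chi$ as you suggest.

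Two smaller points. First, the factor~$2$ on the left of \eqref{eq:main} does not come from ``complex conjugate embeddings'' in the way you describe; it falls out of the constant bookkeeping: $s\log|\cdot|=-\tfrac14 h_D\sum C_\dd$ together with the normalization $C_{\dd,\ell}=-\tfrac{2}{h_D}(\cdots)$ in \eqref{eq:Cdmm} and the factors $s_2,s_3$ produced by Propositions~\ref{prop:s2contr} and~\ref{prop:s3contr}. Second, the class--dependence claim at the end is read off directly from the explicit right--hand side of \eqref{eq:main}, not from Lemma~\ref{lemma:equivact}.
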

  \begin{proof}
Let $z_i \in \H$ be CM points associated to $\Ac_i$,  satisfying Lemma \ref{lem:smallCM}.
    By \eqref{eq:Blift}, Lemma \ref{lemma:classgp} and Theorem \ref{thm:Schofer}, we can write
$$
s \log \left|
 \prod_{\Bc \in \Cl(D)} (f(\Ac_1 \Bc)^{24/s} - f(\Ac_2 \Bc)^{24/s})
\right| =
-\frac{1}{4}h_D
\sum_{\substack{m, \tm \in \Q_{\ge 0},\\  m + \tm ={1}}}
\sum_{\dd \mid s}
C_\dd(\tm, m),
$$
where $C_\dd(\tm , m)$ is defined in \eqref{eq:Cdm} for $\af_i = a_i \Z[z_i]$.
By Prop.\ \ref{prop:Diff1}, we can then write
\begin{align*}
  2\ord_{\ell}& \lp \prod_{\Bc \in \Cl(D)} (f(\Ac_1 \Bc)^{24/s} - f(\Ac_2 \Bc)^{24/s}) \rp
   =
- \frac{h_D}2    \sum_{\substack{m, \tm \in \Q_{\ge 0},\\  m + \tm ={1}}}
\sum_{\dd \mid s}
C_{\dd, \ell}(\tm, m)
\end{align*}

By the calculations in Section \ref{sect:local} (in particular Prop.\ \ref{prop:s2contr}, \ref{prop:s3contr} and \ref{prop:dpcontr}), we see that $C_{\dd, \ell}(\tm, m)$ vanishes when $n, \tn \not\in \Z$ or $\tilde{\alpha} \not\in \taf_0$, where we denote
$$
n:= - D_0 t m,~ \tn := -D_0 t \tm.
$$
In particular, Propositions \ \ref{prop:s2contr} and \ref{prop:s3contr} give us
\begin{align*}
 & \frac{-s^{-1} h_D}{2} \sum_{\dd \mid s} C_{\dd, \ell}\lp -\frac{\tn}{D_0t}, -\frac{n}{D_0t} \rp
\\
&=
\rho^{(6D\ell)}(n)
\sum_{\substack{\tilde{\alpha} \in \taf_0\\ \frac{1}{a}\Nm(\tilde{\alpha}) = \tn}}
  \prod_{p \mid 6,~ p \neq \ell}
  \sum_{\substack{s'_p \mid r_p \mid s_p\\ \frac{n\tn}{r_p^2} \equiv 19 \bmod{\frac{s_p}{r_p}}}}
  \sum_{AB = 2r_p} \rho_p \lp \frac{n}{A^2} \rp  \mathds{1}_{\taf_0} \lp \frac{\tilde{\alpha}}{B} \rp\\
 &\quad\times
  \prod_{p \mid D,~ p \neq \ell} \delta_p(n, \tilde{\alpha})
      \begin{cases}
        \rho'_{\ell} (n),& \text{ if } \ell \nmid 6D,\\
\rho'_{3s_\ell}(n),& \text{ if } \ell = 3,\\              \delta'_{\ell}(n, \tilde{\alpha}), & \text{ if } \ell \mid D.
      \end{cases}\\
&=  
  \sum_{\substack{r \mid (s/s_\ell)\\ (s'/s'_\ell) \mid r\\ \frac{n\tn}{r^2} \equiv 19 \bmod{\frac{s}{r}}}}
  \sum_{AB = 2r}
\rho^{(D\ell)}\lp \frac{n}{A^2} \rp
 \sum_{\substack{\tilde{\alpha} \in B\taf_0\\ \frac{1}{a}\Nm(\tilde{\alpha}) = -D_0 t\tm}}
  \prod_{p \mid D,~ p \neq \ell} \delta_p(n, \tilde{\alpha})
      \begin{cases}
        \rho'_{\ell} (n),& \text{ if } \ell \nmid 6D,\\
\rho'_{3s_\ell}(n),& \text{ if } \ell = 3,\\
        \delta'_{\ell}(n, \tilde{\alpha}), & \text{ if } \ell \mid D.
      \end{cases}
\end{align*}
By Proposition \ \ref{prop:dpcontr}, we have $\delta_p^{(')}(n, {\tilde{\alpha}}) = \delta_p^{(')}(n, {\tilde{\alpha}}/B)$ for all $\tilde{\alpha} \in B \taf_0$ and $p \mid D$. Summing over all $n \in \Z_{\ge 0}$ then gives \eqref{eq:main}.
\end{proof}

\begin{corollary}\label{cor:main}
    Any prime factor of ${\rm disc}(D,s,[\mathfrak{a}])$ is no larger than~$|D|$.
\end{corollary}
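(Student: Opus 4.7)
The plan is to derive Corollary \ref{cor:main} directly from Theorem \ref{thm:main} by specializing to the discriminant case $D_1=D_2=D$, which forces $t=1$ and $D_0=D$. In that setting the summation index in \eqref{eq:main} satisfies the constraint
\[
n+\tn = -D_0 t = |D|,\qquad n,\tn \in \Z_{\ge 0},
\]
so every admissible $n$ lies in the interval $[0,|D|]$. The right-hand side of \eqref{eq:main} then bounds $2\ord_\ell(\mathrm{disc}(D;s,\tAc))$ from above by a sum whose non-vanishing is controlled by the $\ell$-local factor.

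Fix a prime $\ell$ with $\ord_\ell(\mathrm{disc}(D;s,\tAc))\neq 0$. Then at least one term on the right-hand side of \eqref{eq:main} must be non-zero, and I will verify in each of the three cases of that formula that $\ell\le |D|$. When $\ell\mid D$, there is nothing to prove. When $\ell\nmid 3D$, the $\ell$-local factor equals $\rho'_\ell(n)$, which by \eqref{eq:rho'} is non-zero only if $\ell\in \Diff(n,\Nc_1)$; this forces $\ell$ to be inert in $\kay$ with $o_\ell(n)$ odd, and in particular $\ell\mid n$ with $n>0$. Combined with $n\le |D|$, this gives $\ell\le n\le |D|$. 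When $\ell=3$ (and $3\nmid D$, otherwise we are in the first case), the bound $3\le |D|$ is automatic since \eqref{eqd} implies $|D|\ge 7$.

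The only subtlety is ruling out $n=0$ in the middle case: since every quadratic space represents $0$ trivially at every place, $\Diff(0,\Nc_1)=\emptyset$ and hence $\rho'_\ell(0)=0$, so any non-vanishing contribution must have $n>0$. This is the entire argument; no substantive obstacle is anticipated, and in particular no new local computation is needed beyond what is already established in Section \ref{sect:local} and recorded in Theorem \ref{thm:main}.
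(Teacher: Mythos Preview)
Your proof is correct and follows essentially the same approach as the paper: both rely on the constraint $n \le |D|$ from the summation in \eqref{eq:main} together with the fact that $\rho'_\ell(n)\neq 0$ forces $\ell \mid n$. You are in fact more thorough than the paper's two-sentence argument, since you separately dispose of the trivial cases $\ell \mid D$ and $\ell = 3$ and address the $n=0$ term explicitly; the paper handles the $n=0$ issue elsewhere (in the proof of Theorem~\ref{thm:YZconj}) by noting that the $\tilde\alpha$-sum is empty when $\tAc$ is non-trivial, which is an alternative justification you could also cite.
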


\begin{proof}
    Notice by~\eqref{eq:main} that the argument~$n$ is no larger than~$|D|$. Also by the definition of $\rho_{\ell}'(n)$, one can tell that $\rho_{\ell}'(n)=0$ for $\ell>n$. Therefore, one must have $\ell\leq |D|$.
\end{proof}

\subsection{The Yui-Zagier conjecture on discriminant}
Throughout this section, we let $D< 0$ be a fundamental discriminant satisfying \eqref{eqd}. 
For any $\OO_D$-integral ideal $\af$, let $c(\af)$ denote the content of $\af$, i.e.\ largest positive integer such that $\af/c(\af)$ is still an $\OO_D$-integral ideal. For an integer $n$, we define 
\begin{equation}
    S(D, n) =\{ p < \infty \text{ prime}:\,  \epsilon_p(n) =-1 \}.
\end{equation}
Here $\epsilon_p$ is the quadratic character of $\Q_p^\times$ associated to $\kay_{D, p}$.  Notice that when $n >0$,  $S(D, -n)$ has odd cardinality and equals to $\Diff(-n/(D_0t), \Nc_1)$ from Section \ref{subsec:incoh} with $D = D_0$ and $t = 1$. 
We now define
\begin{equation}
  \label{eq:wln}
  w(\ell, n) =
    w_{D}(\ell, n) :=
  \begin{cases}
  \sigma(  \hbox{gcd}(n, |D|/\hbox{gcd}(\ell, |D|))), &\text{if }S(D, -n) =\{\ell\},\\
    0 &\text{otherwise,}
  \end{cases}
\end{equation}
where $\sigma$ is the divisor function $\sigma(n)=\sum_{d|n}1$. 
As a function in $n$, $w(\ell, n)$ is in fact defined on $(\Qb^\times \cap \prod_{p \mid D} \Zb_p^\times)/(\Nm(\kay^\times) \cap \prod_{p \mid D} \Zb_p^\times) $. 
\begin{theorem}
  \label{thm:YZconj}
Let $D< 0$ be a fundamental discriminant satisfying~\eqref{eqd}, and for $s \mid 24$ 
  and each non-trivial class $\tAc=[\tilde{\mathfrak{a}}] \in \Cl(D)$ with ${\rm Nm}(\tilde{\mathfrak{a}})=a$ coprime to~$D$,  denote
  \begin{equation}
    \label{eq:discA}
\disc(D; s, \tAc) := \prod_{\Ac \in \Cl(D)} (f(\Ac)^{24/s} -  f(\Ac\tAc)^{24/s}) \in \kay_{D},    
  \end{equation}
so that the discriminant ${\rm disc}(D;s)$ of $f(\mathcal{O}_{D})^{24/s}$ is given by
$$
{\rm disc}(D;s)=\prod_{\substack{\tilde{\mathfrak{A}}\in{\rm Cl}(D)\\\tilde{\mathfrak{A}}\ne [\mathcal{O}_{D}]}}{\rm disc}(D;s,\tilde{\mathfrak{A}}).
$$
Then the following hold.
\begin{enumerate}
    \item For each prime $\ell$ split in $k_{D}$, we have ${\rm ord}_{\ell}({\rm disc}(D;s,\tilde{\mathfrak{A}}))=0$.
 \item  For each prime $\ell\ne3$ inert in $\kay_{D}$, 
$\ord_\ell(\disc(D; s, \tAc))=a_\ell $ 
is equal  to the number of pairs of integral ideals $(\mathfrak{b}_{1}, \bfrak_{2})$ weighted by $w(\ell,(|D|-{\rm Nm}(\mathfrak{b}_{2}))a)$ such that
 $[\mathfrak{b}_{2}]=\tilde{\mathfrak{A}}$, $\ell^{j}{\rm Nm}(\mathfrak{b}_{1})+{\rm Nm}(\mathfrak{b}_{2})=|D|$ for some $j>0$, and that $\mathfrak{a} =(2s')^{-1}\mathfrak{b}_{1}\mathfrak{b}_{2}$ is a  nonzero integral ideal $\mathfrak{a}$ satisfying that
    $$
    c(\mathfrak{a})\left(\ell\frac{{\rm Nm}(\mathfrak{a})}{c(\mathfrak{a})^{2}}+5\right)\equiv0\pmod{\frac{s}{s'}},
    $$
    where $s'=\gcd\left(s,3^{1-\left(\frac{D}{3}\right)}\right)$.

\item For $\ell=3$ inert in $k_{D}$, we have that
\begin{align*}
&{\rm ord}_{3}({\rm disc}(D;s,\tilde{\mathfrak{A}}))\\
   & =\frac{1}{2}   \sum_{\substack{n\in\mathbb{N}\\n + \tn =|D|\\\tilde{n}\geq0}}w(3,na)
     \sum_{\substack{r \mid (s/s_3),~ r>0\\ \frac{n\tn}{4r^2} \equiv 19 \bmod{\frac{s/s_3}{r}}}}
  \sum_{\substack{AB = 2r\\j \ge 1}}
\lp
  \rho^{}\lp \frac{n}{3^j A^2}\rp
  +
  \rho^{}\lp \frac{n}{3^j s_3 A^2}\rp
  \rp
 r_{\tAc}\lp \frac{\tn}{B^2} \rp,
\end{align*}
where $\rho^{(M)}(n)$ is defined as in~\eqref{eq:rhom},  $s_{3}$ is the exact power of~$3$ dividing~$s$.

        \item For $\ell|D$, we have
          $$
          \ord_\ell(\disc(D; s, \tAc))=
           \frac{1}{2}w(\ell,|D|a)
          + a_\ell. $$ 
Here  $a_\ell$ is  defined as in part~(2).
   

\end{enumerate}

\end{theorem}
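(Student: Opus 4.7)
The plan is to deduce Theorem~\ref{thm:YZconj} from the master formula of Theorem~\ref{thm:main}, specialized to the diagonal situation $D_1 = D_2 = D$ with $D$ fundamental, so that $D_0 = D$, $t = 1$, and $\taf_0 = \overline{\af_1}\af_2$ is an integral $\OO_D$-ideal of norm $a$ whose class in $\Cl(D)$ is $\tAc$. The dictionary I would use to read the right-hand side of \eqref{eq:main} combinatorially is: the sum over $\tilde{\alpha} \in \taf_0$ with $\Nm(\tilde{\alpha})/a = \tn/B^2$ parametrizes (up to $\OO_D^\times$) the integral ideals $\bfrak_2$ in the class $\tAc$ of norm $\tn/B^2$; the factor $\rho^{(D\ell)}(n/A^2)$ counts integral ideals $\bfrak_1^{(0)}$ of norm $n/A^2$ coprime to $D\ell$; the relation $n+\tn=|D|$ together with the $\ell$-adic redistribution yields $\ell^j\Nm(\bfrak_1) + \Nm(\bfrak_2) = |D|$; and the $r,A,B$-constraints in \eqref{eq:main} transcribe into the content condition on $\mathfrak{a}=(2s')^{-1}\bfrak_1\bfrak_2$ stated in the theorem.

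For part~(1), I would note that when $\ell$ splits in $\kay_D$ one has $\ell \notin \Diff(m,\Nc_1)$ for any $m$, hence $\rho'_\ell \equiv 0$ by \eqref{eq:rho'}; since $\ell \nmid D$ only the first branch of \eqref{eq:main} is active, and it vanishes. For part~(2), $\ell\ne 3$ inert with $\ell\nmid D$: here $s_\ell=s'_\ell=1$ so the $r,A,B$-sums together with the congruence $n\tn/(4r^2)\equiv 19\pmod{s/r}$ encode exactly the content divisibility condition, while $\rho'_\ell(n) = o_\ell(n)+1$ is nonzero precisely when $S(D,-na)=\{\ell\}$ and enumerates the admissible exponents $j\ge 1$. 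The remaining step is to verify, using the ramified-prime Whittaker calculations of Section~\ref{sect:local}, that the product $\prod_{p\mid D}\delta_p(n,\tilde{\alpha})$ collapses to the divisor-weight $w(\ell,(|D|-\Nm(\bfrak_2))a)$. For part~(3), $\ell=3$ inert, the factor $\rho'_{3s_3}(n)$ plays the role of $\rho'_\ell(n)$, and its expansion produces the two summands $\rho(n/(3^jA^2))$ and $\rho(n/(3^j s_3 A^2))$ appearing in the statement, the second coming from the stricter divisibility forced when $3\mid c(\mathfrak{a})$; the overall factor of $\tfrac12$ reflects the convention $r_{\tAc}(0)=\tfrac12$ used to symmetrize the $(n,\tn)$ sum.

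For part~(4), $\ell\mid D$, the factor $\delta'_\ell(n,\tilde{\alpha})$ in \eqref{eq:main} replaces $\rho'_\ell(n)$. The local formulas of Section~\ref{sect:local} would decompose $\delta'_\ell$ into a bulk term that, via the same dictionary as part~(2), produces the count $a_\ell$, plus a boundary contribution coming from the constant term of the incoherent Eisenstein series at $\tn=0$ (equivalently $\tilde{\alpha}=0$, $\bfrak_2=0$), which evaluates to $\tfrac12 w(\ell,|D|a)$. The principal technical obstacle throughout will be the ramified-prime local calculations: explicitly computing $\delta_p$ and $\delta'_\ell$ for $p,\ell \mid D$, and verifying that the product of the $\delta_p$ telescopes into the announced weight $\sigma(\gcd(\,\cdot\,,|D|/\gcd(\ell,|D|)))$, all while carefully tracking the Eisenstein constant-term contribution so that in part~(4) the bulk and boundary pieces are tallied exactly once each.
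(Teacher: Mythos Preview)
Your approach is essentially the paper's: specialize Theorem~\ref{thm:main} to $D_1=D_2=D$ fundamental (so $t=1$), use Proposition~\ref{prop:dpcontr} with $r=r_0=1$ to compute $\delta_p,\delta'_p$ for $p\mid D$ (obtaining the formulas \eqref{eq:dp-2}, \eqref{eq:dpp-2}), verify that $\prod_{p\mid D,\,p\neq\ell}\delta_p(n,\tilde\alpha)=w(\ell,(|D|-\tn)a)$, and reindex via the ideal correspondence $\tilde\alpha\leftrightarrow\bfrak_2$.

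Two misattributions worth correcting. First, the factor $\tfrac12$ in part~(3) does \emph{not} come from $r_{\tAc}(0)=\tfrac12$; the $\tn=0$ term actually vanishes there since $3\nmid D$ forces $\rho(|D|/3^j)=0$. Rather, the $2$ on the left of \eqref{eq:main} cancels against the unit factor in $\sum_{\tilde\alpha}=|\OO_D^\times|\,r_{\tAc}=2r_{\tAc}$, and the surviving $\tfrac12$ arises from rewriting $\rho'_{3s_3}(n)=\tfrac12\sum_{j\ge1}\bigl(\rho_3(n/3^j)+\rho_3(n/(3^j s_3))\bigr)$. Second, in part~(4) the extra $\tfrac12 w(\ell,|D|a)$ does come from the boundary $\tn=0$, $\tilde\alpha=0$, but this is the constant term of the \emph{theta} factor $\theta_{\Pc}$, not of the incoherent Eisenstein series (the Eisenstein constant term corresponds to $n=0$, which the paper disposes of separately by noting $\taf_0$ is not principal). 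These are bookkeeping slips; the strategy is sound.
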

\begin{proof}
Take fundamental discriminants 
 $D=D_{1}=D_{2}$ in Theorem~\ref{thm:main}. 
 First, notice that the term $n = 0, \tn = -D$ is empty, since $\tilde\af_0$ is not principal and there is no $\tilde\alpha \in \tilde\af_0$ with norm $a\tn$. 
Now,  using Prop.\ \ref{prop:dpcontr} and Remark \ref{rmk:chip} with $t = 1, r = r_0 = 1$,  one can find that for $p|D$ and $n$ a positive integer,
\begin{align}\label{ddn}
  \delta_p(n, \tilde{\alpha})
  &=
        \begin{cases}
  2& \mbox{if  $\epsilon_{p}(-na)=1$ and $o_p(n) \ge 1$},\\
  1& \mbox{if  $\epsilon_{p}(-na)=1$ and $o_p(n) = 0$},\\
  0& \mbox{if  $\epsilon_{p}(-na)=-1$},
\end{cases}\\
\label{ddn2}
\delta_p'(n, \tilde{\alpha})&=\begin{cases}
    o_p(n)
    &\mbox{if $\epsilon_{p}(-na)=-1$,}\\
    0&\mbox{otherwise.}
\end{cases}
\end{align}
In particular, $\delta'_p(n, \tilde\alpha) = \rho'_\ell(n/D_0)$.
Then let $\ell$ be a prime. If $\ell$ is split, it is clear that by~\eqref{eq:rho'}, the formula~\eqref{eq:main} implies that ${\rm ord}_{\ell}({\rm disc}(D;s,\tilde{\mathfrak{A}}))=0$.  This takes care Item (1).

If $\ell$ is either inert or ramified, it is not hard to see that
$$
\prod_{p|D\,\,prime,\,p\ne\ell}\delta_{p}(n,\tilde{\alpha})=w(\ell,(|D|-\tilde{n})a),
$$
where $w(\ell,na)$ is defined as before Theorem~\ref{thm:YZconj}.

Item (2) and (4):
We  apply~\eqref{eq:main} together with~\eqref{ddn} and~\eqref{eq:rho'} to deduce that
     \begin{align}\label{ordell}
             & \ord_{\ell} \disc(D; s, \tAc)\\
              &=\frac12\sum_{\substack{n > 0,\tilde{n}\ge 0\\n + \tn = -D}}
w(\ell,(|D|-\tilde{n})a)  
\sum_{j \ge 1}
     \sum_{\substack{r \mid s,~ s' \mid r\\ \frac{n\tn}{4r^2}  \equiv 19 \bmod{\frac{s}{r}}}}
\sum_{AB = 2r}
      \rho_{}\lp \frac{n}{A^2 \ell^j}\rp  \sum_{\substack{\tilde{\alpha}\in\tilde{\mathfrak{a}}_0\\{\rm Nm}(\tilde{\alpha})/a=\tilde{n}/B^{2}}}1\nonumber\\
          &=   
\sum_{\substack{n>0,\tilde{n}\ge0\\n + \tn = -D}}
w(\ell,(|D|-\tilde{n})a)   
\sum_{j \ge 1}
     \sum_{\substack{r \mid s,~ s' \mid r\\ \frac{n\tn}{4r^2}  \equiv 19 \bmod{\frac{s}{r}}}}
\sum_{AB = 2r}
       \rho_{}\lp \frac{n}{A^2 \ell^j}\rp
       \sum_{\substack{\bfrak \subset \Oc_{D_0}\\{\rm Nm}(\mathfrak{b})=\tilde{n}/B^{2}\\ 
 [\mathfrak{b}]=\tilde{\mathfrak{A}}}}1\nonumber\\
 &=\sum_{\substack{n>0,\tilde{n}\ge0\\n + \tn = -D}}
w(\ell,(|D|-\tilde{n})a)     
\sum_{j \ge 1}
      \sum_{\substack{r \mid s,~ s' \mid r\\ \frac{n\tn}{4r^2}+5  \equiv 0 \bmod{\frac{s}{r}}}}
\sum_{\substack{{\rm Nm}(\mathfrak{b}_{1})=n/\ell^{j}\\{\rm Nm}(\mathfrak{b}_{2})=\tilde{n}\\  [{\mathfrak{b}}_{2}]=\tilde{\mathfrak{A}}\\\mathfrak{b}_{1}\mathfrak{b}_{2}=2r\mathfrak{a}}}1.\nonumber
     \end{align}
     Note for $\tn = 0, n = -D$, we have $\rho(n/A^{2}\ell^j)=0$ for  $\ell \nmid D$ and all $j \ge 1$.
     Otherwise for $ \ell \mid D$, this term gives $\frac{1}{2}w(\ell,|D|a)$. 
     Also, we can delete $a$ from $w(\ell, (|D|-\tilde n)a)$ by choosing representative $\tilde\af$ of the class $\tilde\Ac$ having norm co-prime to $\ell$. 

     Following the reasoning given in the remarks above \cite[Eq. ($14_{?}$)]{YZ}, the rest of the summation amounts to counting the pairs of integral ideals $(\mathfrak{b}_{1},\mathfrak{b}_{2})$ weighted by $w(\ell,(|D|-{\rm Nm}(\mathfrak{b}_{2}))a)$ such that $[\mathfrak{b}_{2}]=\tilde{\mathfrak{A}}$, $\ell^{j}{\rm Nm}(\mathfrak{b}_{1})+{\rm Nm}(\mathfrak{b}_{2})=|D|$ for some $j>0$ and $\mathfrak{b}_{1}\mathfrak{b}_{2}=2s'\mathfrak{a}$ for some nonzero integral ideal $\mathfrak{a}$ satisfying
    $$
    c(\mathfrak{a})\left(\ell\frac{{\rm Nm}(\mathfrak{a})}{c(\mathfrak{a})^{2}}+5\right)\equiv0\pmod{\frac{s}{s'}}.
    $$
    Item (3) follows by the same argument using~\eqref{eq:main} and~\eqref{eq:s3contr}.   
 \end{proof}

\begin{proof}[Proof of Theorem~\ref{corYZ}]
  Theorem~\ref{corYZ}  follows from Theorem~\ref{thm:YZconj} via noticing that when $|D|$ is a prime, for $\ell$ inert in $k_{D}$, it is clear that ${\rm Nm}(\mathfrak{b}_{2})$ in item (2) of Theorem~\ref{thm:YZconj} must be strictly smaller than~$p$ for the validity of $\ell^{j}{\rm Nm}(\mathfrak{b}_{1})+{\rm Nm}(\mathfrak{b}_{2})=p$, and $w(\ell, (p-{\rm Nm}(\mathfrak{b}_{2}))a)=2^{\omega(\gcd(p-{\rm Nm}(\mathfrak{b}_{2}),p))}=1$. And for $\ell=p$, it is clear that there are no nonzero integral ideals $\mathfrak{b}_{1}$, $\mathfrak{b}_{2}$ such that $p^{j}{\rm Nm}(\mathfrak{b}_{1})+{\rm Nm}(\mathfrak{b}_{2})=p$ for $j>0$, so the quantity $a_{p}$ in item (4) of Theorem~\ref{thm:YZconj} must vanish, and $w(p,pa)=2^{\omega(\gcd(p,1))}=1$, since $\epsilon_{p}(-pa)=-1$ as the class number of $
  \Qb(\sqrt{-p})$ is odd.
\end{proof}



Our formulas in  Theorem~\ref{thm:YZconj} and Theorem ~\ref{corYZ} match with examples in \cite{YZ}, even when $D$ is not a prime. The following exam verify our formula for $f(\mathfrak a)^{24}$ for $=-31$.

\begin{example}
    For $D=-31$, one can numerically find that the defining polynomial of $f\left(\frac{D+\sqrt{D}}{2}\right)^{24}$ over $\mathbb{Q}$ is $X^3-165X^2+9642X-1$, whose discriminant is $-3^{12}11^{2}23^{2}31$. Since ${\rm Cl}(-31)=\{[\mathcal{O}_{31}],[\mathfrak{a}],[\overline{\mathfrak{a}}]\}$, where $\mathfrak{a}=[2,\frac{-1+\sqrt{-31}}{2}]$, and ${\rm ord}_{\ell}({\rm disc}(-31,1,[\mathfrak{a}]))={\rm ord}_{\ell}({\rm disc}(-31,1,[\overline{\mathfrak{a}}]))$, then it suffices to compute ${\rm ord}_{\ell}({\rm disc}(-31,1,[\mathfrak{a}]))$. 
   In addition, by Corollary~\ref{cor:main}, one only needs to compute for $\ell=11,13,17,23,29$. 
   It is routine to check that the only triples $(\ell,\mathfrak{b}_{1},\mathfrak{b}_{2})$ satisfying the assumptions imposed by item (2) of Theorem~\ref{corYZ} are $(11,\mathcal{O}_{k},2\mathfrak{p}_{5})$, where $\mathfrak{p}_{5}$ is a prime ideal over~$5$, and $(23,\mathcal{O}_{k},2\mathfrak{a})$, and these yield that
   $$
   {\rm ord}_{11}({\rm disc}(-31,1,[\mathfrak{a}]))={\rm ord}_{23}({\rm disc}(-31,1,[\mathfrak{a}]))=1,
   $$
   and for $\ell=13,17,29$, ${\rm ord}_{\ell}({\rm disc}(-31,1,[\mathfrak{a}]))=0$.
    For $\ell=31$, by item (4) of Theorem~\ref{corYZ} one has that
\begin{align*}
        {\rm ord}_{\ell}({\rm disc}(-31,1,[\mathfrak{a}]))&=\frac{1}{2}.
    \end{align*}
Also, when $\ell=3$, by item (3) of Theorem~\ref{corYZ} one has that
\begin{align*}
    &{\rm ord}_{3}({\rm disc}(-31,1,[\mathfrak{a}]))\\
        &=\frac{1}{2}\rho^{(31\cdot3)}(3)\sum_{\substack{{\alpha}\in\mathfrak{a}\\{\rm Nm}({\alpha})=19}}\delta_{31}(12)\rho_{3}'(12) + \frac{1}{2}\rho^{(31\cdot3)}(6)\sum_{\substack{{\alpha}\in\mathfrak{a}\\{\rm Nm}({\alpha})=7}}\delta_{31}(24)\rho_{3}'(24)\\
        &\quad+ \frac{1}{2}\rho^{(31\cdot3)}(27)\sum_{\substack{{\alpha}\in\mathfrak{a}\\{\rm Nm}({\alpha})=1}}\delta_{31}(27)\rho_{3}'(27)+ \frac{1}{2}\rho^{(31\cdot3)}(15)\sum_{\substack{{\alpha}\in\mathfrak{a}\\{\rm Nm}({\alpha})=4}}\delta_{31}(15)\rho_{3}'(15)\\
        &\quad + \frac{1}{2}\rho^{(31\cdot3)}(3)\sum_{\substack{{\alpha}\in\mathfrak{a}\\{\rm Nm}({\alpha})=7}}\delta_{31}(3)\rho_{3}'(3)\\
        &=6.
\end{align*}
Putting all these together one finds that
$$
|{\rm disc}(-31,1,[\mathfrak{a}]){\rm disc}(-31,1,[\overline{\mathfrak{a}}])|=3^{12}11^{2}23^{2}31,
$$
which is exactly the absolute value of the discriminant of the aforementioned polynomial.
\end{example}

\subsection{Factorization of Resultants}

The factorization of the resultant is given in the following result, which specializes to Theorem \ref{thm:main2}. 
\begin{theorem}
  \label{thm:main2a}
  Let $D_i = D_0 t_i^2, t, \mathfrak{A}_i, P_i(x)$ and $R(P_1, P_2)$ be the same as in Theorem \ref{thm:main2}. 
  Suppose  all primes dividing $t := t_1t_2 > 1$ are non-split in $\kay$, and denote
  $$
D_{0, t} := \gcd(D_0, t),~ D_0' := D_0/D_{0, t}
  $$
  For a non-split prime   $\ell$,
let $\kappa_\ell$ be any negative integer number co-prime  to $D_0$ such that for all primes $p \mid D_0$, 
$(\tfrac{\kappa_\ell}{p}) = 1$ if and only if $p \neq \ell$. 
If $\ell \nmid 3t$, then $\hbox{ord}_{\ell} R(P_1, P_2)$ is given by
\begin{equation}
    \label{eq:res1}
          \sum_{\substack{n,\tn\in\mathbb{N}\\n + \tn = |D_0 t|\\ \gcd(n, t^2) \mid D_{0, t}}}
{\sigma(\gcd(n, \tfrac{D_0'}{\gcd(\ell, D_0')}))}
     \sum_{\substack{r \mid s,~ r>0\\s' \mid r\\ \frac{n\tn}{4r^2} \equiv 19 \bmod{\frac{s}{r}}}}
  \sum_{\substack{AB = 2r\\j \ge o_\ell(D_0)}}
   \rho^{}\lp \frac{n}{A^2\ell^j}\rp
\rg(\tn/B^2; -\kappa_\ell \cdot n).
\end{equation}
where $s' = \gcd(s, 3^{1-(\tfrac{D}3)})$. 
If $\ell \mid t$,  then $\hbox{ord}_{\ell} R(P_1, P_2)$ is given by
\begin{equation}
    \label{eq:res2}
          \sum_{\substack{n\in\mathbb{N},\tn\geq0\\n + \tn = |D_0 t|\\ \gcd(n^{(\ell)}, t^2) \mid D_{0, t}}}
{\sigma(\gcd(n, {D_0'}))}
     \sum_{\substack{r \mid s,~ r>0\\s' \mid r\\ \frac{n\tn}{4r^2} \equiv 19 \bmod{\frac{s}{r}}}}
  \sum_{AB = 2r}
   \rho^{}\lp \frac{n}{A^2\ell \gcd(\ell, D_{0,t})}\rp
\rg(\tn/B^2; -\kappa_\ell \cdot n)
\end{equation}
plus an extra term $\rho(0)(1-\epsilon(\ell))\rho(-D_0t)$ if $\ell$ is the only prime dividing $t$. 
Here $n^{(\ell)} := n/\ell^{o_\ell(n)}$.
If $\ell = 3$, then $\hbox{ord}_{\ell} R(P_1, P_2)$ is given by
\begin{equation}
  \label{eq:res3}
          \sum_{\substack{n, \tn\in\mathbb{N}\\n + \tn = |D_0 t|\\ \gcd(n, t^2) \mid D_{0, t}}}
\frac{\sigma(\gcd(n, D_0'))}2         
     \sum_{\substack{r \mid s_2,~ r>0\\ \frac{n\tn}{4r^2} \equiv 19 \bmod{\frac{s_2}{r}}}}
  \sum_{\substack{AB = 2r\\j \ge 1}}
\lp  \rho^{}\lp \frac{n}{A^23^{j}}\rp
+   \rho^{}\lp \frac{n/s_3}{A^23^j}\rp\rp
\rg(\tn/B^2; - n).
\end{equation}
\end{theorem}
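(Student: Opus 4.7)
The plan is to derive Theorem \ref{thm:main2a} from the main formula Theorem \ref{thm:main} by summing over orbits of the diagonal $\Cl(D)$-action on $\Cl(D_1) \times \Cl(D_2)$, and then completing the local computation of $\delta_p, \delta_p'$ at primes $p \mid D$ using Propositions 4.5--4.7.

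First I would write
$$R(P_1, P_2) = \prod_{\Ac_1 \in \Cl(D_1),\, \Ac_2 \in \Cl(D_2)} \lp f(\Ac_1)^{24/s} - f(\Ac_2)^{24/s} \rp$$
and organize this product into $\Cl(D)$-orbits under the diagonal action $\Bc \cdot (\Ac_1, \Ac_2) = (\Ac_1 \Bc, \Ac_2 \Bc)$, where $\Bc \in \Cl(D)$ acts through the natural surjections $\Cl(D) \twoheadrightarrow \Cl(D_i)$. Each orbit is parametrized by the invariant $\tAc_0 = [\overline{\af_1}\af_2] \in \Cl(D_0)$, to which Theorem \ref{thm:main} applies. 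Summing over a complete set of orbit representatives, the double sum $\sum_{\tAc_0} \sum_{\tilde\alpha \in \taf_0}$ (with the norm condition $\Nm(\tilde\alpha)/a = \tn/B^2$ from \eqref{eq:main}) collapses to a sum over $\OO_{D_0}$-ideals $\bfrak$ of norm $\tn/B^2$ whose class lies in prescribed genus classes, i.e.\ to a value of $\rg$.

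Next I would evaluate the local factors at primes $p \mid D$, $p \neq \ell$. For $p \mid D_0$, formulas \eqref{ddn}--\eqref{ddn2} give $\delta_p \in \{0, 1, 2\}$, with value $2$ iff $\epsilon_p(-na) = 1$ and $p \mid n$. Since $D_0$ satisfies \eqref{eqd} it is squarefree, and
$$\prod_{p \mid D_0,\, p \neq \ell} \delta_p(n, \tilde\alpha) \;=\; 2^{\#\{p \,\mid\, \gcd(n,\, D_0'/\gcd(\ell, D_0'))\}} \;=\; \sigma\bigl(\gcd(n,\, D_0'/\gcd(\ell, D_0'))\bigr)$$
whenever the underlying genus conditions are satisfied. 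For $p \mid t$ I would invoke the non-maximal order Whittaker formulas of Proposition 4.7 to compute $\delta_p$ and show that non-vanishing forces $\gcd(n, t^2) \mid D_{0,t}$. Choosing $\kappa_\ell$ negative, coprime to $D_0$, and with the prescribed local quadratic characters at primes of $D_0$, the genus conditions imposed by $\prod_{p \mid D_0} \delta_p \neq 0$ match precisely those defining $\rg(\tn/B^2;\, -\kappa_\ell n)$, so the $\tilde\alpha$-sum recovers exactly this $\rg$-value. Combining with the local factor at $\ell$ provided by Theorem \ref{thm:main} --- namely $\rho_\ell'(n)$ when $\ell \nmid 3t$, $\rho_{3 s_3}'(n)$ when $\ell = 3$, and $\delta_\ell'(n, \tilde\alpha)$ when $\ell \mid D$ --- one reads off \eqref{eq:res1}, \eqref{eq:res3}, and the main part of \eqref{eq:res2}.

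The main obstacle is the case $\ell \mid t$. Two subtle points appear. First, the local factor $\delta_\ell$ must be evaluated at the non-maximal order $\OO_D$ at $p = \ell$; its evaluation via Proposition 4.7 accounts for the particular exponent of $\ell$ in the denominator $A^2 \ell \gcd(\ell, D_{0,t})$ of $\rho$ in \eqref{eq:res2}. Second, the boundary term $\tn = 0$ is non-zero only when $\tilde\alpha = 0$, i.e.\ $\tAc_0 = [\OO_{D_0}]$, and its contribution comes from the constant term of the holomorphic part $\mathcal E_\Nc$ of the incoherent Eisenstein series, interacting with the $\phi_{L + \smat{0}{1/2}{0}{0}}$-component of $\tilde F_1$ from Theorem \ref{theo:BorcherdsLifting}. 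A short computation with $\prod_{p \mid t,\, p \neq \ell} \delta_p$ at $\tn = 0$ shows that this piece survives only when $\ell$ is the \emph{unique} prime dividing $t$, producing the extra term $\rho(0)(1 - \epsilon(\ell))\rho(-D_0 t)$. This constant-term contribution is absent in the coprime-discriminant setting of \cite{LY}, and pinning down its sign, normalization, and precise conditions is the most delicate step of the proof.
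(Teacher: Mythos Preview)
Your overall strategy — organize the resultant into $\Cl(D)$-orbits via the exact sequence $1 \to \Cl(D) \to \Cl(D_1)\times\Cl(D_2) \to \Cl(D_0) \to 1$, apply Theorem~\ref{thm:main} to each orbit, and then simplify the local factors $\delta_p, \delta_p'$ at $p\mid D$ — is exactly the paper's approach, and your identification of $\prod_{p\mid D_0',\,p\neq\ell}\delta_p$ with $\sigma(\gcd(n,D_0'/\gcd(\ell,D_0')))$ and of the constraint $\gcd(n,t^2)\mid D_{0,t}$ from the primes $p\mid t$ is correct. Two remarks on efficiency: you need not return to Propositions~\ref{prop4.5}--\ref{prop4.7}; the specializations you need are already packaged as \eqref{eq:dp-1}, \eqref{eq:dp-2}, \eqref{eq:dpp-1}, \eqref{eq:dpp-2}, \eqref{eq:dp0a}, \eqref{eq:dp0b} in Proposition~\ref{prop:dpcontr}.

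There is, however, a genuine error in your treatment of the extra term in \eqref{eq:res2}. You attribute it to the boundary $\tn=0$ (forcing $\tilde\alpha=0$) and to an interaction with the $\phi_{L+\smat{0}{1/2}{0}{0}}$-component of $\tilde F_1$. Both claims are wrong. First, the extra term arises from $n=0$ (equivalently $m=0$, $\tm=1$), \emph{not} $\tn=0$: the factor $\rho(0)$ in the extra term is precisely $\rho^{(D\ell)}(0)$ from \eqref{eq:main}, and $\rho(-D_0t)=\sum_{[\tilde\af_0]\in\Cl(D_0)}r_{[\tilde\af_0]}(-D_0t)$ comes from summing the $\tilde\alpha$-sum (with $\Nm(\tilde\alpha)/a=-D_0t$, not $0$) over orbit representatives. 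The factor $1-\epsilon(\ell)$ is $\delta'_\ell(0,\tilde\alpha)$ from \eqref{eq:dp0b}, and the requirement that $\ell$ be the only prime of $t$ comes from $\delta_p(0,\tilde\alpha)=0$ for every other $p\mid t$, also by \eqref{eq:dp0b}. Second, the $\phi_{L+\smat{0}{1/2}{0}{0}}$-component plays no role: as noted in the proof of Theorem~\ref{thm:Schofer}, the constant term of $\theta_\Pc\mathcal E_\Nc$ at that coset is trivial, so this piece of $\tilde F_1$ contributes nothing. The $n=0$ contribution enters through the main formula \eqref{eq:main} itself, via the $m=0$ Eisenstein coefficient encoded in the $L_p(s,\epsilon)^{\delta_{m=0}}$ normalization of \eqref{eq:delta6}--\eqref{eq:deltaD}.
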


\begin{proof}
 First we have  the short exact sequence:
  $$
1  \longrightarrow \Cl(D) \stackrel{(\phi_1, \phi_2)}{\longrightarrow} \Cl(D_1) \times \Cl(D_2) \stackrel{}{\longrightarrow} \Cl(D_0)  \longrightarrow 1.~
  $$
Here $\phi_i([\mathfrak a]) =[\mathfrak a \mathcal O_{D_i}]$, and the last map is given by 
$([\mathfrak a_1],  [\mathfrak a_2]) \mapsto [\mathfrak a_1^{-1} \mathfrak a_2 \mathcal O_{D_0}].
$
So  the resultant is given by
    $$
\prod_{\mathfrak{A_1} \in \Cl(D_1)}
\prod_{\Ac_2 \in \Cl(D_2)} (f(\Ac_1)^{24/s} - f(\Ac_2)^{24/s})
=
    \prod_{\Ac \in S_0}
    \prod_{\Bc \in \Cl(D)} (f(\phi_1(\Bc))^{24/s} - f(\Ac \phi_2(\Bc))^{24/s}),
    $$
    where 
    $S_0 \subset \Cl(D_2)$ is a subset such that
    $S_0 \cong \Cl(D_0)$ as sets via the restriction map $\Cl(D_2) \to \Cl(D_0)$. 
    As in \eqref{eq:discA}, we denote
    $$
    \disc(D, s, \Ac) :=
    \prod_{\Bc \in \Cl(D)} (f(\phi_1(\Bc))^{24/s} - f(\Ac \phi_2(\Bc))^{24/s}). 
    $$
    To find $\ord_\ell\disc(D, s, \Ac)$, we apply Theorem \ref{thm:main} and need to evaluate $\delta_p(n, \tilde\alpha)$ and $\delta'_\ell(n, \tilde\alpha)$. 
    For this, we use Proposition \ref{prop:dpcontr}, in particular equations \eqref{eq:dp-1}, \eqref{eq:dp-2}, \eqref{eq:dp0a} and \eqref{eq:dp0b}.

    When $n = 0$, the quantity $\delta_p(0, \tilde\alpha)$ vanishes for $p \mid t$. So if $\ell$ is not the only prime factor of $t$, then the contribution of the terms with $n = 0$ to the right hand side of \eqref{eq:main} is 0.
    Otherwise, the contribution is $\rho(0)(1-\epsilon(\ell))r_{[\tilde\af_0]}(-D_0t)$. Summing over $[\tilde\af_0] \in \Cl(D_0)$ gives the extra contribution to \eqref{eq:res2}. 

When $n > 0$, we suppose        $\Diff(-n/(D_0t), \Nc_1) = \{\ell\}$.
    Note that by assumption, $p \mid D = D_0t^2$ is either ramified or inert.
If $p \mid D_0' \mid D_0$, i.e.\ $p \nmid t$, we have $\delta_p(n, \tilde\alpha) = \sigma(\gcd(n, p))$. 
        If $p \mid D_{0, t}$, then $p$ is ramified and 
    $\tilde\alpha\in p\Oc_{D_0}$ if and only if $o_p(n) = o_p(aD_0t + \Nm(\tilde\alpha)) \ge 2$.
    Similarly for $p \mid t$ but $p \nmid D_{0, t}$, we have     $\tilde\alpha\in p\Oc_{D_0}$ if and only if $o_p(n) = o_p(aD_0t + \Nm(\tilde\alpha)) \ge 1$.
    Therefore for $p \mid t$, we have
    $   \delta_p(n, \tilde\alpha)    =      1    $ if and only if $o_p(n) \le 1$ for $p \mid D_{0, t}$ and $o_p(n) = 0$ for $p \mid t$ but $p \nmid D_0$.
    Otherwise, $\delta_p(n, \tilde\alpha) = 0$.
    Note the condition on $o_p(n)$ can be succinctly written as $\gcd(n, p^N) \mid D_{0, t}$ for any $N \ge 2$. 
Putting these together with \eqref{eq:dpp-1} and \eqref{eq:dpp-2} gives us
    $$
  \rho^{(D\ell)}(n/A^2)    \delta'_\ell(n, \tilde\alpha)
    \prod_{p\mid D \text{ prime}, p \neq \ell}\delta_p(n, \tilde\alpha)
    =
      \sigma(\gcd(n, \tfrac{D_0'}{\gcd(\ell, D_0')}))
      \begin{cases}
\sum_{j \ge o_\ell(D_0')} \rho_{}(\frac{n/A^2}{\ell^j})        
      & \text{if } \ell \nmid t,\\
      \rho(\frac{n/A^2}{\ell \gcd(\ell, D_{0, t})}) & \text{if } \ell \mid t,
    \end{cases}
    $$
    for $\ell \neq 3$ and $\gcd(n^{(\ell)}, t^2) \mid D_{0, t}$.

    Suppose $\Diff(-n/(D_0t), \Nc_1) = S(D_0, -na)$ contains another prime $\ell'\neq\ell$.
If $\ell'$ is inert,    then $2 \nmid o_{\ell'}(n)$ and $\rho(n/(A^2\ell^j)) = 0$.
    So the equality above still holds in this case.
    To ensure that the right hand side also vanishes when $\ell'$ is ramified, it suffices to multiply it by the characteristic function that 
     $[\Ac]$ is in the genus representing $-\kappa_\ell \cdot n$.
     Then summing over $\Ac \in S_0$ proves \eqref{eq:res1} and \eqref{eq:res2}.     
     The case for $\ell = 3$ in \eqref{eq:res3} follows similarly using \eqref{eq:s3contr}.
\end{proof}

Similarly, one has the following analogue of Corollary~\ref{cor:main}.
\begin{corollary}\label{cor:main2}
    Let $D_i = D_0 t_i^2$ with $D_0 \equiv 1 \bmod8$ fundamental, and all primes dividing $t = t_1t_2 > 1$ are non-split in $\kay$. 
Then the prime factors of the resultant of the minimal polynomials of $f(\mathfrak{A}_i)^{24/s}$ are no larger than~$D_{0}t$.
\end{corollary}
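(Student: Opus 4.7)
The plan is to mimic the argument of Corollary~\ref{cor:main}, using the explicit formulas for $\ord_\ell R(P_1,P_2)$ provided by Theorem~\ref{thm:main2a}. The key observation is that every summation in \eqref{eq:res1}, \eqref{eq:res2}, and \eqref{eq:res3} is indexed by pairs $(n,\tn)\in\mathbb{N}^2$ with $n+\tn=|D_0 t|$, so in particular $n\leq|D_0 t|$. I would then go case by case on $\ell$ and show that, for $\ell>|D_0 t|$, every term of the relevant sum vanishes.

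First, if $\ell\nmid 3t$, formula \eqref{eq:res1} contains the factor $\rho\bigl(n/(A^2\ell^j)\bigr)$ with $j\geq o_\ell(D_0)$. If $\ell\nmid D_0$, then $\ell$ is unramified and the only way for the corresponding term of the Eisenstein derivative (and hence of the formula) to be nonzero is $j\geq 1$, which forces $\ell\leq \ell^j\leq n/A^2\leq n\leq |D_0 t|$. If $\ell\mid D_0$, then already $\ell\leq |D_0|\leq |D_0 t|$ since $D_0$ is fundamental and odd (hence squarefree), so the claim is immediate. Next, if $\ell\mid t$, formula \eqref{eq:res2} only applies when $\ell$ divides $t$, so trivially $\ell\leq t\leq |D_0 t|$. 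The possible extra term $\rho(0)(1-\epsilon(\ell))\rho(-D_0 t)$ appears only when $\ell$ is the unique prime factor of $t$, in which case again $\ell\leq t\leq |D_0 t|$. Finally, if $\ell=3$, formula \eqref{eq:res3} automatically satisfies $3\leq |D_0 t|$ since condition \eqref{eqd} forces $|D_0|\geq 7$.

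Combining these three cases shows that $\ord_\ell R(P_1,P_2)=0$ whenever $\ell>|D_0 t|$, which proves the corollary. The only step requiring a little care is the subcase $\ell\nmid 3tD_0$ of \eqref{eq:res1}, where one must verify that the outer $\sigma$-factor and the genus condition in $\rg(\tn/B^2;-\kappa_\ell\cdot n)$ do not allow a nonzero contribution coming from a ``$j=0$'' term; this is exactly the statement that, in the unramified inert case, $\ell$ must actually divide $n/A^2$ for the Whittaker derivative at $\ell$ (encoded in $\rho'_\ell$ via \eqref{eq:rho'}) to be nonzero. I expect this to be the only nontrivial check; everything else is bookkeeping on the indices of summation.
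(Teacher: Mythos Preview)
Your outline is correct, but it takes a longer route than the paper intends. The paper proves Corollary~\ref{cor:main2} exactly as it proves Corollary~\ref{cor:main}: directly from the main formula~\eqref{eq:main} of Theorem~\ref{thm:main}, not from the derived expressions~\eqref{eq:res1}--\eqref{eq:res3} of Theorem~\ref{thm:main2a}. In~\eqref{eq:main} the summation variable satisfies $0\le n\le |D_0t|$; for $\ell\nmid 3D$ the last factor is $\rho'_\ell(n)$, which by~\eqref{eq:rho'} vanishes unless $\ell\mid n$, forcing $\ell\le n\le |D_0t|$; for $\ell\mid 3D=3D_0t^2$ one has $\ell\mid 3D_0t$ and hence $\ell\le |D_0t|$ trivially. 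Summing over $\Ac_1,\Ac_2$ to pass from~\eqref{eq:main} to the resultant changes nothing. That is the entire argument.

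Your detour through Theorem~\ref{thm:main2a} introduces two small wrinkles. First, those formulas are stated only for non-split $\ell$, so you should say separately that split primes contribute nothing (immediate from~\eqref{eq:main} since $\rho'_\ell\equiv 0$ there). Second, the ``$j=0$'' issue you flag in~\eqref{eq:res1} is genuine if you argue from that formula alone: when $\ell\nmid D_0$ the sum really does start at $j=0$, and showing that this term is harmless requires knowing that for every $n$ in the sum either some inert $\ell'\in\Diff$ kills $\rho(n/A^2)$ or the genus factor $\rho_{\mathrm g}(\tn/B^2;-\kappa_\ell n)$ vanishes. That is exactly the mechanism used in the proof of Theorem~\ref{thm:main2a} to pass from~\eqref{eq:main} to~\eqref{eq:res1}, so invoking it amounts to going back to~\eqref{eq:main} anyway. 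It is cleaner to start there.
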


\begin{example}
    Take $D_{1}=-7$ and $D_{2}=-5^{2}7$ and $s=1$. Then $D_{0}=-7$, $t=5$, the defining polynomial associated with $\mathcal{O}_{D_{1}}$ is $P_{1}(X)=X-1$, and the defining polynomial associated with $\mathcal{O}_{D_{2}}$ is
    \begin{align*}
      P_{2}(X)&=  X^6-45771X^5+1046370975X^4\\
      &\quad+293236687600X^3+23843150292975X^2-273301922603526X+1,
    \end{align*}
    so the resultant ${R}(P_{1},P_{2})$ of $P_{1}(X)$ and $P_{2}(X)$ is $-3^{14}\cdot5\cdot7^2\cdot19^3\cdot31$. In what follows, we use~\eqref{eq:res} to compute the prime factorization for the resultant. Under the choices of $D_{1}$, $D_{2}$ and $s$, the formula~\eqref{eq:res} yields that for $\ell\ne5$ inert or ramified in $\mathbb{Q}[\sqrt{-7}]$,
    \begin{align*}
        {\rm ord}_{\ell}({R}(P_{1},P_{2}))&=  \sum_{\substack{n, \tn\in\mathbb{N}\\n + \tn = 35\\ (n, 5) = 1\\4|n\tilde{n}}}
\sigma(\gcd(n, 7/\ell))
  \sum_{\substack{AB = 2\\j \ge 1}}
  \rho^{}\lp \frac{n}{A^2\ell^{j}}\rp
\rho(\tn/B^2)\\
&=\sum_{\substack{1\leq k\leq8\\k\ne5}}\sigma(\gcd(4k,7/\ell))\sum_{j\geq1}\rho(k/\ell^{j})\rho(35-4k)\\
&\quad+\sum_{\substack{1\leq k\leq 8\\k\ne 5}}\sigma(\gcd(35-4k,7/\ell))\sum_{j\geq1}\rho((35-4k)/\ell^{j})\rho(k).
    \end{align*}
Note by Corollary~\ref{cor:main2} that $\ell$ cannot exceed~35. So the primes $\ell\ne5$ that may have contributions to the resultant are $3$, $7$, $13$, $17$, $19$, $31$. Applying the formula above, one can compute and find that
$$
{\rm ord}_{3}({R}(P_{1},P_{2}))=14,\quad {\rm ord}_{7}({R}(P_{1},P_{2}))=2,\quad {\rm ord}_{13}({R}(P_{1},P_{2}))=0,
$$
$$
{\rm ord}_{17}({R}(P_{1},P_{2}))=0,\quad {\rm ord}_{19}({R}(P_{1},P_{2}))=3,\quad {\rm ord}_{31}({R}(P_{1},P_{2}))=1.
$$
    Finally, for $\ell=5$, by~\eqref{eq:res2} one can first find that
    \begin{align*}
      {\rm ord}_{5}({R}(P_{1},P_{2}))
      &=
        \sum_{\substack{n+\tilde{n}=35\\\tilde{n}\geq0, n\geq1\\4|n\tilde{n}}}
      \sigma(\gcd(n,7))\sum_{AB=2}\rho^{}(n/(\ell A^{2}))
    \rho(\tn/B^2)\\
&= \rho^{}(15/5 )     \rho(20/4)
+  \rho^{}(20/5)     \rho(15)
+  2\rho^{}(35/5)     \rho(0) = 1.
    \end{align*}
Putting all the data above gives the prime factorization for ${R}(P_{1},P_{2})$ that matches the numerical result.

\end{example}



\section{Local calculation} \label{sect:local}
\subsection{Values of local Whittaker functions}
\label{subsec:Whitt}
In this subsection, we use different notation for convenience of future application of the basic formulas. Let $F$ be a finite field extension of $\Q_p$ with a uniformizer $\pi$ and $q=|\pi|^{-1}$, and assume $p\ne 2$. Let $0 \ne \Delta \in \OO_F$, and let $E =F(\sqrt\Delta)$ be the quadratic extension of $F$ ($E =F + F$ if $\Delta$ is square) with order $\OO_\Delta=\OO_F + \sqrt \Delta \OO_F$. Let $0\ne \kappa \in\OO_F$, and let $\mathcal M=\OO_\Delta$ with quadratic form $Q_\kappa(x) =\kappa x \bar x$. So its dual $\mathcal M' =\frac{1}{\kappa \sqrt\Delta} \mathcal M$. 
Our goal is to give an explicit formula for the local density integral for $\mu \in  \mathcal M'/\mathcal M$.
\begin{equation}
\label{eq:Wm}
W_{m}(s, \mu) =
W_{m}(s, \mu; \mathcal{M})
:=\int_{F} \int_{\mu+ \OO_{\Delta}} \psi(b\kappa x \bar x)  \psi(-mb)|a(wn(b))|^s  dxdb.
\end{equation}
Here  $\psi$ is an unramified non-trivial additive character of $F$, and $\hbox{Vol}(\OO_\Delta, dx) = |\Delta|^{\frac{1}2}=q^{-\frac{1}2 o(\Delta)}$.  This integral was calculated carefully in \cite[Appendix]{YYY} when $\mathcal O_\Delta =\mathcal O_E$ i.e., $o(\Delta) =0$  or $1$.  Similar calculation gives the following three propositions we need in this paper. We leave the detail to the reader.

Let $\chi_{-\pi}$ be the quadratic character of $F^\times$ associated to the quadratic extension $F(\sqrt{-\pi})/F$. 
For a number $b \in F^\times$, write $b=b_0 \pi^{o(b)}$ with $b_0 \in \OO_F^\times$ and  $o(b) \in \Z$. For $b=0$, we set $b_0=1$ and $o(b) =\infty$.
Notice that (for $b \ne 0$)
\begin{equation}
\chi_{-\pi}(b) = (\frac{b_0}{\pi} )=\pm 1
\end{equation}
depending on whether  $b_0$ is a square modulo $\pi$ (equivalently whether $b_0$ is a square in $F$).

\begin{proposition} \label{prop4.5} One has $W_m(s, 0) =0$ unless $m \in \OO_F$. Assume $m \in \OO_F$.  Write $X= q^{-s}$.

\begin{enumerate}
\item When  $ o(m) -o(\kappa) < 0$, we have
$$
|\Delta|^{-\frac{1}2} W_m(s, 0)= (1-X) \sum_{0 \le n \le o(m)} (qX)^n.
$$

\item When  $0 \le o(m)-o(\kappa) < o(\Delta)$ , we have
\begin{align*}
|\Delta|^{-\frac{1}2} W_m(s, 0)&= (1-X) \sum_{0 \le n < o(\kappa)} (qX)^n
 +  (1-X^2)(qX)^{o(\kappa)} \sum_{0 \le n < \frac{1}2 o(m/\kappa)} (qX^2)^n
\\
&\quad + \begin{cases}
q^{\frac{1}2o(m\kappa)}X^{o(m)} (1+ \chi_{-\pi}(m \kappa) X) &\fff o(m/\kappa) \hbox{ is even},
   \\
    0 &\fff o(m/\kappa) \hbox{ is odd} .
    \end{cases}
\end{align*}

\item When  $ o(m/\kappa) \ge o(\Delta), m \neq 0$, and $o(\Delta)$ is even ($E/F$ is unramified), we have
\begin{align*}
|\Delta|^{-\frac{1}2} W_m(s, 0)&= (1-X) \sum_{0 \le n < o(\kappa)} (qX)^n
 +  (1-X^2)(qX)^{o(\kappa)} \sum_{0 \le n < \frac{1}2 o(\Delta)} (qX^2)^n
 \\
 & \quad + \frac{q^{\frac{1}2o(\kappa^2\Delta)} X^{o(\kappa)}}{L(s+1, \chi_\Delta)} \sum_{o(\Delta) \le n \le o(m/\kappa)} (\chi_{\Delta}(\pi) X)^n.
 \end{align*}

\item When  $o(m/\kappa) \ge o(\Delta), m \neq 0$, and $o(\Delta)$ is odd ($E/F$ is ramified), we have
\begin{align*}
|\Delta|^{-\frac{1}2} W_m(s, 0)
 &=(1-X) \sum_{0 \le n < o(\kappa)} (qX)^n+ (1-X^2)(qX)^{o(\kappa)} \sum_{0 \le n < \frac{1}2 (o(\Delta)-1)} (qX^2)^n
 \\
 &\quad  +q^{\frac{1}2 (o(\kappa^2 \Delta) -1)} X^{o(\kappa \Delta)-1}
   \left[ 1 +\chi_{\Delta}(m\kappa) X^{o(\frac{m}{\kappa\Delta})+2}\right].
\end{align*}

\item Suppose  $m = 0$, then we have 
\begin{align*}
  |\Delta|^{-\frac{1}2} W_0(s, 0)
  &= (1-X) \sum_{0 \le n < o(\kappa)} (qX)^n
 +  (1-X^2)(qX)^{o(\kappa)} \sum_{0 \le n < \lfloor \frac{1}2 o(\Delta)\rfloor} (qX^2)^n
 \\
  & \quad +
q^{\lfloor\frac{1}2o(\kappa^2\Delta)\rfloor} X^{o(\kappa) + 2\lfloor o( \Delta)/2 \rfloor}
    \frac{L(s, \chi_\Delta)}{L(s+1, \chi_\Delta)} 
 \end{align*}

\end{enumerate}
\end{proposition}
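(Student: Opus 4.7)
The proof follows the same strategy as the computations in \cite[Appendix]{YYY}, with extra care taken to handle the non-maximal order $\OO_\Delta = \OO_F + \sqrt\Delta \OO_F$ (which coincides with the maximal order $\OO_E$ only when $o(\Delta) \in \{0, 1\}$). By Fubini we write
$$
W_m(s, 0) = \int_F \Phi(b)\, \psi(-mb)\, |a(wn(b))|^s\, db, \qquad \Phi(b) := \int_{\OO_\Delta} \psi(b\kappa x\bar x)\, dx.
$$
Parametrizing $x = u + v\sqrt\Delta$ with $u, v \in \OO_F$ diagonalizes the norm form as $Q(x) = u^2 - \Delta v^2$, so $\Phi(b)$ factors as a product of two one-dimensional Gauss integrals over $\OO_F$. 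A direct evaluation shows that $\Phi(b) = |\Delta|^{1/2}$ when $o(b\kappa) \ge 0$, that $\Phi(b) = 0$ when $o(b\kappa) < -o(\Delta)$, and that for intermediate valuations $\Phi(b)$ is a classical Gauss sum involving the character $\chi_\Delta$ when $o(\Delta)$ is even and involving $\chi_{-\pi}$ when $o(\Delta)$ is odd.

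Using $|a(wn(b))|^s = \min(1, |b|^{-s})$, I would then stratify the outer integral by the valuation $n := -o(b)$. On each annulus $\pi^{-n}\OO_F^\times$, the elementary integral $\int \psi(-mb)\, db$ is nonzero only when $o(m) \ge -n - 1$ and has an explicit value; combining this with the $b_0$-dependence of $\Phi(b)$, the sum over $n$ collapses into three geometric series in $X = q^{-s}$. The first, with ratio $qX$, arises from the range $0 \le n < o(\kappa)$ on which $\Phi$ is independent of $b_0 \in \OO_F^\times$; the second, with ratio $qX^2$, arises from the range $o(\kappa) \le n < o(\kappa\Delta)$ on which $\Phi$ picks up a nontrivial quadratic character of $b_0$; and isolated boundary contributions appear at $n = o(m) + 1$ or at the cutoff $n = o(\kappa\Delta)$, where either $\psi(-mb)$ loses its orthogonality or $\Phi$ stabilizes.

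The five cases of the proposition correspond to which of the thresholds in $\{0, o(\kappa), o(\kappa\Delta)\}$ the valuation $-o(m)$ falls between, i.e., to whether the outer $b$-integral is truncated first by the support of $\psi(-mb)$ or by the vanishing of $\Phi$. The main technical point is case (4), where $E/F$ is ramified and the value of $\Phi$ at the boundary $o(b\kappa) = -o(\Delta)$ is a Gauss sum over the residue field that must be evaluated using $\sqrt\Delta$ as the uniformizer of $E$; this is what produces the character value $\chi_\Delta(m\kappa)$ in the stated formula. Case (5) is handled as a limit of case (3)/(4) when $m \to 0$: the factor $\psi(-mb)$ becomes trivial, the outer integral over the stratum $n \ge o(\kappa\Delta)$ becomes an infinite geometric series convergent for $\operatorname{Re}(s)$ large, and it re-sums to the Euler factor $L(s, \chi_\Delta)/L(s+1, \chi_\Delta)$ appearing in the last formula.
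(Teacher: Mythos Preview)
Your strategy matches the paper's exactly: factor the inner integral over $\OO_\Delta$ into two one-dimensional Gauss integrals using the diagonalization $Q_\kappa(u+v\sqrt\Delta)=\kappa(u^2-\Delta v^2)$, stratify the outer $b$-integral by valuation, and sum the resulting geometric progressions in $X=q^{-s}$. That is precisely how the paper proceeds (via its Lemmas 4.1--4.2 and Propositions 4.3--4.4 in the appendix material).

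However, your description of $\Phi(b)$ contains an error that, if followed, would give the wrong answer in cases (3), (4), (5). You claim $\Phi(b)=0$ once $o(b\kappa)<-o(\Delta)$. This is false: in that range \emph{both} factors $I_0^{(1)}(b\kappa)$ and $I_0^{(1)}(-\Delta b\kappa)$ are nonzero Gauss sums, and their product is
\[
|\Delta|^{-1/2}\Phi(b)=|b\kappa|^{-1}\,\gamma(b\kappa,\psi)\,\gamma(-\Delta b\kappa,\psi),
\]
which is a root of unity times $|b\kappa|^{-1}$, not zero. It is exactly this tail range $n-o(\kappa)>o(\Delta)$ that produces the sum $\sum_{o(\Delta)\le n\le o(m/\kappa)}(\chi_\Delta(\pi)X)^n$ in case~(3), the factor $\chi_\Delta(m\kappa)X^{o(m/(\kappa\Delta))+2}$ in case~(4), and the Euler factor in case~(5). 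You implicitly acknowledge this when you discuss case~(5), saying the stratum $n\ge o(\kappa\Delta)$ gives an infinite geometric series---but that is inconsistent with your earlier vanishing claim. Similarly, in the \emph{intermediate} range $-o(\Delta)\le o(b\kappa)<0$ only the first factor is a Gauss sum (the second equals $1$), and the relevant character is $\chi_{-\pi}$ regardless of the parity of $o(\Delta)$; the dichotomy between $\chi_\Delta$ and $\chi_{-\pi}$ enters only after one combines the two Gauss sums in the deep range and uses $\gamma(\psi)^2=\chi_{-\pi}(-1)$ together with the identity relating $\chi_{-\pi}$ to $\chi_\Delta$. Once you correct the behavior of $\Phi(b)$ in the three ranges, the rest of your outline goes through and coincides with the paper's computation.
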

When  $o(\Delta) =0 $ or $1$, this gives \cite[Propositions 3.6.2 and 3.6.3]{HY12} (see also \cite[Propositions 5.7 and 5.8]{YYY}.

\begin{remark}
 The formula (2) in Proposition \ref{prop4.5} does not contradict to the fact that if $m$ is in the Diff set, then  the value should be zero.
Indeed,  assuming (2), then  $W_m(0, 0) \ne 0$ if and only if  $o(m/\kappa)$ is even and  $\chi_{-\pi}(m \kappa) =(\frac{m_0 \kappa_0}{\pi}) =1$, i.e., $m \kappa$ is a square in $F$.  
So $W_m(0, 0) \ne 0$ implies that the quadratic space $(V=F(\sqrt{\Delta}), Q(x) =\kappa x \bar x)$ represents $m$. 
This example also shows that even when $(V=F(\sqrt{\Delta}), Q(x) =\kappa x \bar x)$ represents $m$,  $W_m(0, 0)$ could still be zero. 

\end{remark}
Now assume that $\mu=\mu_1  + \mu_2 \sqrt\Delta \in  \mathcal M'-\mathcal M$ with  $\mu_1 \in \frac{1}\kappa \OO_F$ and $\mu_2 \in  \frac{1}{\kappa\Delta}\OO_F$. Denote
$$
o(\mu) =\begin{cases}
   \min(o(\mu_1),  o(\mu_2 \Delta)) &\fff \mu_1,  \mu_2 \notin \OO_F,
   \\
   o(\mu_1) &\fff \mu_2 \in \OO_F,
   \\
   o(\mu_2 \Delta) &\fff \mu_1 \in  \OO_F  .
  \end{cases}
$$
   Let
\begin{equation}
\alpha(\mu, m) = -\kappa \mu \bar\mu + m,  \quad \hbox{and }  o(\mu, m) = o(\alpha(\mu, m)/\kappa).
\end{equation}
Both depend on the choice of $\mu \in  \mathcal M'$, not just its image in $\mathcal M'/\mathcal M$.

\begin{proposition} \label{prop4.6} Assume  $\mu_1=0$ and  $o(\Delta \mu_2) \ge 0$. Then  $W_m(s, \mu)=0$ unless $\alpha(\mu, m) \in \OO_F$.  Assume $\alpha(\mu, m) \in \OO_F$, then the following holds.

\begin{enumerate}
\item When  $o(\mu, m) <0$, we have
$$
|\Delta|^{-\frac{1}2} W_m(s, \mu) =(1-X) \sum_{0 \le n \le o(\mu, m)+o(\kappa)} (qX)^n.
$$

\item When  $ 0\le o(\mu, m) < o(\mu)= o(\mu_2\Delta)$, we have
\begin{align*}
|\Delta|^{-\frac{1}2} W_m(s, \mu) &=(1-X) \sum_{0 \le n < o(\kappa)} (qX)^n
 +(1-X^2)(qX)^{o(\kappa)} \sum_{0 \le n <\frac{1}2 o(\mu, m) } (qX^2)^n
\\
 &\quad+\begin{cases}
    0 &\fff o(\mu, m) \hbox{ is odd},
    \\
    q^{o(\kappa)+ \frac{1}2o(\mu, m)} X^{o(\mu, m)} (1+ \chi_{-\pi} (\kappa  \alpha(\mu, m)) X)
       &\fff o(\mu, m)  \hbox{ is even}.
    \end{cases}
\end{align*}

\item  When  $o(\mu, m) \ge o(\mu)$, we have
\begin{align*}
|\Delta|^{-\frac{1}2} W_m(s, \mu)
&= (1-X) \sum_{0 \le n \le o(\kappa)} (qX)^n
\\
 &\quad  +(1-X^2)(qX)^{o(\kappa)} \sum_{0 \le n <[\frac{1}2 o(\mu)] } (qX^2)^n
  + (qX^2)^{[\frac{o(\mu)}{2}]} (qX)^{o(\kappa)},
  \end{align*}
Here $[a]$ means the integer part of $a$.
\end{enumerate}

\end{proposition}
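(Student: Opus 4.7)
The plan is to mirror the approach used to establish Proposition \ref{prop4.5}, but accounting for the translation by $\mu = \mu_2\sqrt{\Delta}$. The first step is to shift the inner $x$-integral by $\mu$. Writing $x = y + \mu$ with $y = y_1 + y_2\sqrt{\Delta} \in \OO_\Delta$, and using $\mu\bar{\mu} = -\Delta\mu_2^2$ and $\mu\bar{y} + \bar{\mu}y = -2\Delta\mu_2 y_2$, one completes the square to obtain
$$
\kappa x\bar{x} = \kappa y_1^2 - \kappa\Delta(y_2 + \mu_2)^2.
$$
After substituting $y_2' = y_2 + \mu_2 \in \mu_2 + \OO_F$, the inner integral factors as a product $I_1(b\kappa)\cdot I_2(b\kappa\Delta; \mu_2)$, where
$$
I_1(c) = \int_{\OO_F}\psi(c y^2)\,dy, \qquad I_2(c; \mu_2) = \int_{\mu_2 + \OO_F}\psi(-c y^2)\,dy.
$$
The condition $\alpha(\mu, m) \in \OO_F$ then arises naturally: expanding around very large negative $o(b)$, orthogonality of $\psi$ forces the coefficient $-\kappa\mu\bar\mu + m = \alpha(\mu,m)$ against $b$ to lie in $\OO_F$, otherwise the outer integral vanishes identically.

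Next I would evaluate each of the one-dimensional integrals explicitly. The integral $I_1(c)$ is the classical quadratic Gauss sum for odd residue characteristic: it equals $1$ when $o(c) \ge 0$, vanishes when $-o(c)$ is odd and $-o(c) \ge 1$, and equals $q^{o(c)/2}$ times a quadratic Gauss sum involving $\chi_{-\pi}(c_0)$ when $-o(c)$ is positive and even. For $I_2(c;\mu_2)$, the key observation is that the shift $\mu_2$ (with $o(\mu_2) \ge -o(\Delta)$) either lies inside $\OO_F$ or has negative valuation. In the latter case, there is a threshold $o(c) \ge -2o(\mu_2)$ beyond which the phase $\psi(-c\mu_2^2)$ becomes nontrivial against the shifted lattice, causing $I_2$ to pick up an extra character factor or to vanish; equivalently, this threshold matches $o(\mu) = o(\mu_2\Delta)$ (measured against $\kappa\Delta$). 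For the outer $b$-integral, one uses the standard formula $|a(wn(b))| = \max(1,|b|)^{-1}$, so $|a(wn(b))|^s = X^{\max(0, -o(b))}$ with $X = q^{-s}$.

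Finally I would decompose the $b$-integral into annuli $o(b) = k$ for $k \in \Z$, and in each annulus assemble $I_1(b\kappa)\cdot I_2(b\kappa\Delta; \mu_2)\cdot \psi(-\alpha(\mu,m)b)$ and sum geometric series in $X$. The three cases of the proposition correspond to the three regimes in which the contributing annuli differ: (1) $o(\mu,m) < 0$ means the phase $\psi(-\alpha(\mu,m)b)$ already cuts off the $b$-sum before the quadratic Gauss sums become nontrivial, leaving only the linear-in-$qX$ piece; (2) $0 \le o(\mu,m) < o(\mu)$ allows the quadratic Gauss sums from $I_1$ and $I_2$ to activate but the shift $\mu_2$ does not yet suppress oscillation, producing the extra boundary term involving $\chi_{-\pi}(\kappa\alpha(\mu,m))$; (3) $o(\mu,m) \ge o(\mu)$ is the deep regime in which the shift $\mu_2$ kills the quadratic character entirely, replacing it by a single undecorated monomial. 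The main obstacle is careful bookkeeping of Gauss sum phases at the transition $k = o(\mu)$: here $I_2$ degenerates in a nontrivial way because the shift $\mu_2$ exactly matches the support of the Gauss sum kernel, and one must verify that the Gauss sum contributions from $I_1$ and $I_2$ combine so that the $\chi_{-\pi}$ factor disappears in case (3) but survives in case (2). Apart from this boundary analysis, the computation is routine and parallel to the appendix of \cite{YYY}.
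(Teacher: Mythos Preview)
Your approach is essentially the paper's: factor the inner integral over $\OO_\Delta$ as a product of two one-dimensional quadratic integrals (this is exactly Lemma~\ref{lem4.1}), decompose the $b$-integral into annuli $o(b)=-n$ (this is Proposition~\ref{prop4.3}), compute the resulting $J_\mu(n)$ case by case (this is Proposition~\ref{newprop4.6}(1)), and sum the geometric series. One small slip in your sketch: the untranslated integral $I_1(c)=\int_{\OO_F}\psi(cy^2)\,dy$ never vanishes for $o(c)<0$; it equals $q^{o(c)/2}$ times a root of unity, with the quadratic character $\chi_{-\pi}(c_0)$ appearing precisely when $-o(c)$ is \emph{odd}, not even---the eventual cutoff at $n-o(\kappa)>o(\mu)$ comes from the shifted integral $I_2$ via the support condition $\cha(\mu_2^{-1}\OO_F)(2b)$ in Lemma~\ref{lem4.1}(2), not from $I_1$.
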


Notice that Proposition \ref{prop4.6}(1) (2) is pretty much the same as Proposition \ref{prop4.5}(1) (2) with $m$ replaced by $\alpha(\mu, m)$.

\begin{proposition} \label{prop4.7}  Assume $\mu_1 \notin \OO_F$ or $o(\Delta \mu_2) <0$. Then

\begin{enumerate}
\item  when  $o(\mu, m) < o(\mu)$, we have
$$
|\Delta|^{-\frac{1}2} W_m(s, \mu) =(1-X) \sum_{0 \le n \le o(\mu, m)+o(\kappa)} (qX)^n.
$$

\item When $o(\mu, m) \ge o(\mu) $, we have
$$
|\Delta|^{-\frac{1}2} W_m(s, \mu) =(1-X) \sum_{0 \le n < o(\kappa \mu)} (qX)^n  + (qX)^{o(\kappa\mu)}.
$$

\end{enumerate}

\end{proposition}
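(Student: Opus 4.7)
The approach parallels the evaluation of Propositions \ref{prop4.5} and \ref{prop4.6}, with the crucial simplification coming from the hypothesis that at least one of $\mu_1$ or $\Delta\mu_2$ lies outside $\mathcal{O}_F$. First, substitute $x = \mu + y$ with $y \in \mathcal{O}_\Delta$ in the integrand of \eqref{eq:Wm}; since $\kappa x \bar{x} = \kappa\mu\bar\mu + \kappa\,\mathrm{Tr}(\mu\bar{y}) + \kappa y\bar{y}$, the phase $\psi(b(\kappa\mu\bar\mu - m)) = \psi(-b\alpha(\mu,m))$ factors out of the inner integral. After decomposing $y = y_1 + y_2\sqrt{\Delta}$ with $y_j \in \mathcal{O}_F$ and using $p \ne 2$, the integral over $\mathcal{O}_\Delta$ becomes a product of two one-dimensional shifted Gauss sums
$$
I_1(b) = \int_{\mathcal{O}_F}\psi(b\kappa y_1^2 + 2b\kappa\mu_1 y_1)\,dy_1,\qquad
I_2(b) = \int_{\mathcal{O}_F}\psi(-b\kappa\Delta y_2^2 - 2b\kappa\Delta\mu_2 y_2)\,dy_2.
$$

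For each $I_j(b)$, when the quadratic coefficient lies in $\mathcal{O}_F$, the sum collapses to the integral of a linear character of $\mathcal{O}_F$: it equals $1$ if the linear coefficient is in $\mathcal{O}_F$ and $0$ otherwise. When the quadratic coefficient has negative valuation, one completes the square and reduces to a finite Gauss sum on $\mathcal{O}_F/\pi^N\mathcal{O}_F$, nonzero only under a divisibility condition on the linear coefficient. Under the hypothesis $\mu_1 \notin \mathcal{O}_F$ or $o(\Delta\mu_2)<0$, these constraints force the inner integrand to vanish unless $o(b) \ge -o(\kappa\mu)$, where $o(\mu) = \min(o(\mu_1), o(\Delta\mu_2))$ (with the convention for $\mu_i = 0$ as stated). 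This is in sharp contrast to Propositions \ref{prop4.5} and \ref{prop4.6}, where small $b$-cells contribute a full quadratic Gauss sum and the characters $\chi_{-\pi}$, $\chi_\Delta$ appear; here they are suppressed, which accounts for the much cleaner form of the answer.

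Finally, partition $F = \bigsqcup_k \pi^{-k}\mathcal{O}_F^\times$ and integrate over each piece using the explicit formula for $|a(wn(b))|^s$. In case (1), $o(\mu, m) < o(\mu)$ means $\alpha(\mu,m)$ has small valuation and $\psi(-b\alpha(\mu,m))$ is non-trivial on cells with $o(b) < -o(\alpha(\mu,m)) = -(o(\mu,m) + o(\kappa))$, killing their contribution; summing the remaining cells gives exactly the geometric sum of length $o(\mu,m)+o(\kappa)+1$. In case (2), $o(\mu,m) \ge o(\mu)$ forces $\alpha(\mu,m)$ to have valuation at least $o(\kappa\mu)$, so that the phase is trivial on the critical cell $o(b) = -o(\kappa\mu)$, producing the boundary term $(qX)^{o(\kappa\mu)}$ on top of the geometric piece. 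The main bookkeeping obstacle is confirming that the auxiliary phases $\psi(-b\kappa\mu_1^2)$ and $\psi(b\kappa\Delta\mu_2^2)$ produced by completing the squares in $I_1$ and $I_2$ recombine with the prefactor $\psi(-b\alpha(\mu,m))$ to leave only $\psi(-bm)$, so that the final answer depends on $\mu$ only through the two valuations $o(\mu,m)$ and $o(\mu)$, exactly as stated.
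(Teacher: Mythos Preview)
Your proposal is correct and follows essentially the same route as the paper. The paper does not write out a proof in the main text, but the intended argument (cf.\ Lemma~\ref{lem4.1} and Proposition~\ref{prop4.3}) is precisely the factorization $I_\mu(b)=|\Delta|^{1/2}I_{\mu_1}^{(1)}(b\kappa)\,I_{\mu_2}^{(1)}(-\Delta b\kappa)$ followed by the cell decomposition $W_m(s,\mu)=\sum_{n\ge 0}J_\mu(n)q^{n(1-s)}$; your substitution $x=\mu+y$ and completion of the square simply re-derives Lemma~\ref{lem4.1}(2), and your observation that under the hypothesis the one-dimensional integrals collapse to characteristic functions (no $\chi_{-\pi}$ or $\chi_\Delta$ terms) is exactly why the answer here is so much cleaner than in Propositions~\ref{prop4.5} and~\ref{prop4.6}.
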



\subsection{Local Splitting of $\phi_\dd$}
Recall that $\tilde\af, \af$ are the $\OO_{4D_0}$-ideals defined in \eqref{eq:tildea}.
From Prop.\ \ref{prop:Lattice}, we have
$$
\tilde \af \oplus \af \stackrel{i_\dd}\cong \Pc \oplus \Nc \subset L_\dd. 
$$
 For each prime $p$, we have the local isometry
  $$
  i_{\dd, p}:= i_{\dd} \otimes \Z_p :(\taf \times \af) \otimes \Z_p\to L_{\dd} \otimes \Z_p,
  $$
from \eqref{eq:mapi}, using which we can define
  \begin{equation}
    \label{eq:tphi}
    \tilde{\phi}_{p\dd_p} := \phi_{\dd_p, p} \circ i_{\dd_p, p} \in S(\taf \otimes \Z_p) \otimes S(\af \otimes \Z_p).
  \end{equation}
  For example, if $p = 2$ and $\dd = 12$, then we would have $\tilde{\phi}_8$ and $\tilde{\phi}_9$ at the places $2$ and $3$ respectively. 
  We can now decompose this purely local Schwartz function $\tilde{\phi}_{\dd_p p}$ into a sum of tensor products in $S(\taf \otimes \Z_p) \otimes S(\af \otimes \Z_p)$ explicitly.
Note that at the places $p \nmid 6D$, the finite quadratic modules $\Pc'/\Pc$, $\Nc'/\Nc$ and $L'_\dd/L_\dd$ are all trivial.
So we just need to consider the primes $p \mid 6D$. 


\subsubsection{$p=2$} 
\label{subsec:p2}

For any $\dd \mid 24$, $m \in \Qb_{\ge 0}$ and $\alpha \in \frac{1}{2\sqrt{D}} \taf$ satisfying $\frac{t}{a}\Nm(\alpha) = 1 - m$, define the quantity
The goal is to calculate the sum of these values over $\dd_p \mid s_p$ for some $s \mid 24$.

We can specialize the local calculations in \cite{LY} to the cases here.
Let $\dd \mid 24$ be fixed and consider the small CM point $Z = (z_1, z_2)$ with $z_i$ chosen as in Lemma \ref{lem:smallCM}.
First, since $D_1, D_2< 0$ are discriminants satisfying \eqref{eqd}, we know that $2$ splits in $\kay_{D_0}$. Therefore, we will fix $\delta_0 \in \Z_2$ satisfying $\delta_0^2 = D_0$ and
\begin{equation}
  \label{eq:zr}
  \zr_i := t_i a_i^{-1} \frac{b + \delta_0}{2} \in 1 + 2\Z_2,~
    \overline{\zr_i} := t_i a_i^{-1} \frac{b - \delta_0}{2} \in 2\Z_2.
  \end{equation}
   for $i = 1, 2$ by Lemma \ref{lem:smallCM}.
We now identify the following $\Q_2$-vector spaces \footnote{To ease notation,  we will use $(x_i) = (x_1, \dots, x_n) \in F^n$ to denote column vectors of size $n$ with entries in a field $F$.}

\begin{equation}
  \label{eq:kay2}
  \begin{split}
    \kay_{D_0} \otimes_{} \Z_2 &\stackrel{\cong}{\rightarrow} \Q_2^2\\
    (\alpha + \beta \sqrt{D_0})\otimes 1 & \mapsto (\alpha + \beta \delta_0, \alpha - \beta \delta_0) .
  \end{split}
\end{equation}
This becomes an isometry with respect to the quadratic forms $C\cdot \Nm(\lambda)$ for $\lambda \in \kay_{D_0} \otimes \Q_2$ and $C \cdot (x_1 x_2)$ for $(x_1, x_2) \in \Q_2^2$, for any $C \in \Q$.
The complex conjugation in $\kay_{D_0}$ induces the automorphism on
\begin{equation}
  \label{eq:barQ2}
\overline{ (x_1, x_2) } := (x_2, x_1)
\end{equation}
for $ (x_1, x_2 ) \in \Q_2^2$.
Under this isometry, the map $i_\dd^{-1}: V_\dd \stackrel{\cong}{\to} \kay_{D_0} \times \kay_{D_0}$ gives rise to
\begin{equation}
  \label{eq:iotad2}
  \begin{split}
    i_{\dd, 2}^{-1}: V_\dd \otimes \Q_2 &\to \Q_2^4\\
    \smat{\mu_3}{\mu_1}{\mu_4}{\mu_2} \otimes 1 &\mapsto P_2\cdot (\mu_i),
  \end{split}
\end{equation}
where $i_{\dd, 2} := i_{\dd} \otimes \Q_2$ and
\begin{equation}
  \label{eq:P}
  P_2 := -\frac{a}{t \delta_0}
  \begin{pmatrix}
    1 & - \overline{\zr_1} & \zr_2 & -\overline{\zr_1}\zr_2 \\
    -1 &  {\zr_1} & -\overline{\zr_2} & \zr_1\overline{\zr_2} \\
    -1 & \overline{\zr_1} & -\overline{\zr_2} & \overline{\zr_1\zr_2} \\
    1 & -{\zr_1} & {\zr_2} & -{\zr_1\zr_2} \\
  \end{pmatrix} \in \SL_4(\Z_2).
\end{equation}
and its reduction moduluo 2, denoted by $\overline{P_2}$, is $
\lp \begin{smallmatrix}
  1 & 0 & 1 & 0 \\
  1 & 1 & 0 & 0 \\
  1 & 0 & 0 & 0 \\
  1 & 1 & 1 & 1
\end{smallmatrix}\rp \in \SL_4(\Fb_2)$ with $\Fb_2 := \Z_2/2\Z_2$.
\begin{remark}
  \label{rmk:LYcompare}
  The identification $V_\dd \otimes \Q_2 \cong \Q_2^4$ given in (6.14) in \cite{LY} is
  $$
  \pmat{\mu_3}{\mu_1}{\mu_4}{\mu_2} \mapsto -\frac{t \delta_0}{a}
\lp  \begin{smallmatrix}
    1 & & & \\
     & -1 & & \\
     & &-1 & \\
     & & & 1\\
  \end{smallmatrix}\rp
  P_2\cdot (\mu_i).
  $$
\end{remark}

For $n \in \N$ and any subspace $\Vb \subset (\Z_2/2\Z_2)^n = \Fb_2^n$, define the sublattice of the lattice $\Z_2^n$ in the vector space $\Q_2^n$
\begin{equation}
  \label{eq:MV}
  M_\Vb := \{x \in \Z_2^n: x \bmod 2\Z_2 \in \Vb\}.
\end{equation}
Notice that $M_{\Fb_2^n} = \Z_2^n$.
Also for any $A \in M_n(\Z_2)$ and $\Vb \subset \Fb_2^n,\Vb' \subset \Fb_2^{n'}$, we have
\begin{equation}
  \label{eq:MVtrans}
 A \cdot  M_\Vb   = M_{
\overline{A}      \cdot \Vb },~
  M_{\Vb} \times   M_{\Vb'} = M_{\Vb \times \Vb'}.
\end{equation}
The subspaces $\Vb$ that we will be particularly interested is the following family of subspaces of codimension 1
\begin{equation}
  \label{eq:Vbn}
  \begin{split}
    \Vb_n &:=  \left\{(\alpha_i) \in \Fb_2^n: \sum_{i = 1}^n \alpha_i = 0\right\} =
        (1, \dots, 1)^\perp \subset \Fb_2^n,\\
  \end{split}
\end{equation}
where $\perp$ is with respect to dot product on $\Fb_2^n$.

Under the isometry in \eqref{eq:kay2}, the $\Z$-modules $\tilde{\af}, \af, \af_i, \bfrak_i$ defined in \eqref{eq:af} for $i = 0, 1, 2$ becomes
\begin{equation}
  \label{eq:ideals2}
  \af \otimes \Z_2
= \bfrak_1 \otimes \Z_2
= \tilde{\af} \otimes \Z_2
\cong M_{\Vb_2} \subset \Z_2^2 \cong \af_i \otimes \Z_2. 
\end{equation}
We can now characterize the 2-part of $\Pc_\dd \oplus \Nc_\dd \subset L_\dd$ in the manner below.
\begin{proposition}
  \label{prop:PNL2}
  Under the isometry in \eqref{eq:iotad2},
   the lattices $\Pc_\dd \oplus \Nc_\dd \subset L_\dd \subset \frac{1}{\dd} L_\dd \subset L_\dd' \subset \Pc_\dd' \oplus \Nc_\dd'$ becomes
  \begin{equation}
    \label{eq:lattchain}
    M_{\Vb_2}^2 \subset
    M_{ \Vb_{4}} \subset
    \frac{1}{ \dd_2}     M_{\Vb_{4}} \subset
 \frac{1}{2\dd_2} M_{\Vb_{4}^\perp} \subset \frac{1}{2\dd_2} M_{\Vb_2}^2,
\end{equation}
with the quadratic form $\tilde{Q}_\dd((y_i)) := \frac{\dd t}{a} (y_1 y_2 - y_3 y_4)$ for $(y_i) \in \Q_2^4$.
\end{proposition}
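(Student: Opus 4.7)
The plan is to work $2$-adically using the splitting \eqref{eq:kay2}, the identifications in \eqref{eq:ideals2} and \eqref{eq:iotad2}, and verify each inclusion in the chain \eqref{eq:lattchain} separately. First, by Proposition \ref{prop:Lattice}, $\Pc_\dd \oplus \Nc_\dd = i_\dd(\tilde{\af} \oplus \af)$ globally; tensoring with $\Z_2$ and applying \eqref{eq:ideals2} identifies this with $M_{\Vb_2} \times M_{\Vb_2} = M_{\Vb_2}^2$ under $i_{\dd, 2}^{-1}$.

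Next, for $L_\dd$, the defining congruence in \eqref{eq:Ld} tells us that $L_\dd \otimes \Z_2$ corresponds to $\{(\mu_i) \in \Z_2^4 : \mu_4 \in 2\Z_2\} = M_{\Wb}$, where $\Wb = (0,0,0,1)^{\perp} \subset \Fb_2^4$. Applying $i_{\dd, 2}^{-1}$ and using \eqref{eq:MVtrans}, we get $i_{\dd, 2}^{-1}(L_\dd \otimes \Z_2) = P_2 \cdot M_{\Wb} = M_{\overline{P_2} \cdot \Wb}$. A direct matrix computation using the explicit form of $\overline{P_2}$ shows that the four components of $\overline{P_2}(a_1, a_2, a_3, 0)^{T}$ sum to $4a_1 + 2a_2 + 2a_3 \equiv 0 \bmod 2$, so $\overline{P_2} \cdot \Wb \subseteq \Vb_4$; since both are $3$-dimensional and $\overline{P_2} \in \SL_4(\Fb_2)$ is invertible, equality holds. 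The inclusion $M_{\Vb_2}^2 \subset M_{\Vb_4}$ is automatic from the parity conditions. Finally, $\frac{1}{\dd}L_\dd \otimes \Z_2 = \frac{1}{\dd_2} M_{\Vb_4}$ since $\dd_3 \in \Z_2^\times$.

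For the dual lattices, we first establish the general fact $M_\Vb^{*} = \tfrac{1}{2} M_{\Vb^{\perp}}$ under the standard pairing on $\Z_2^n$: writing $M_\Vb = \Vb + 2\Z_2^n$ and $z = \tfrac{1}{2}w$ with $w \in \Z_2^n$, the duality condition reduces to $\langle \bar{w}, v\rangle = 0 \in \Fb_2$ for all $v \in \Vb$, i.e.\ $\bar{w} \in \Vb^{\perp}$. The bilinear form $\tilde{B}$ associated to $\tilde{Q}_\dd$ is $\frac{\dd t}{a} J$ with $J$ the involution $(y_1, y_2, y_3, y_4) \mapsto (y_2, y_1, -y_4, -y_3)$. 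Since $J$ and every $2$-adic unit (which is $\equiv 1 \bmod 2$) preserve $M_{\Vb_4^{\perp}}$, and $\frac{a}{\dd t} \in \tfrac{1}{\dd_2} \Z_2^{\times}$ (because $a, t$ are $2$-adic units and $\dd_3 \in \Z_2^{\times}$), the dual of $M_{\Vb_4}$ under $\tilde{B}$ is $\frac{1}{2\dd_2} M_{\Vb_4^{\perp}}$, yielding the claimed identification of $L_\dd' \otimes \Z_2$. For $\Pc_\dd' \oplus \Nc_\dd'$, Proposition \ref{prop:Lattice} gives $\tfrac{1}{2\dd t\sqrt{D_0}}(\tilde{\af} \oplus \af)$; under \eqref{eq:kay2}, $\sqrt{D_0} \mapsto (\delta_0, -\delta_0)$ is a $2$-adic unit, so multiplication by $\tfrac{1}{2\dd t\sqrt{D_0}}$ is coordinate-wise scaling by $\tfrac{1}{2\dd_2}$ times a unit, and since units preserve $M_{\Vb_2}$, we get $\tfrac{1}{2\dd_2}M_{\Vb_2}^2$.

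The remaining inclusions $\frac{1}{\dd_2}M_{\Vb_4} \subset \frac{1}{2\dd_2}M_{\Vb_4^{\perp}} \subset \frac{1}{2\dd_2}M_{\Vb_2}^2$ reduce to elementary checks: $2M_{\Vb_4} \subset 2\Z_2^4 \subset M_{\Vb_4^{\perp}}$ (everything is $0 \bmod 2$), and $M_{\Vb_4^{\perp}} \subset M_{\Vb_2}^2$ (any vector with all coordinates congruent modulo $2$ has both pairs summing to $0 \bmod 2$). The main technical obstacle is the careful bookkeeping of the scaling factor $\tfrac{1}{2\dd t \sqrt{D_0}}$ together with the symplectic-type matrix $J$ governing $\tilde{B}$, and verifying that every $2$-adic unit factor and the coordinate permutation and sign changes from $J$ genuinely preserve the congruence conditions carving out $\Vb_2$, $\Vb_4$, and $\Vb_4^{\perp}$; once this bookkeeping is set up, each individual step is a short linear algebra computation.
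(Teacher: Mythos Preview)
Your proof is correct and, for $\Pc_\dd \oplus \Nc_\dd$, $\Pc_\dd' \oplus \Nc_\dd'$, and $L_\dd$, proceeds exactly as the paper does: pushing the explicit $2$-adic descriptions through $P_2$ via \eqref{eq:MVtrans}. The one place you deviate is $L_\dd'$: the paper simply notes that $L_\dd' \otimes \Z_2 = \tfrac{1}{2\dd_2} M_{\langle (1,0,0,0)\rangle}$ directly from the definition of $L_\dd$, then applies $\overline{P_2}$ to get $\overline{P_2}\cdot\langle(1,0,0,0)\rangle = \langle(1,1,1,1)\rangle = \Vb_4^\perp$. You instead compute the dual of the already-identified $M_{\Vb_4}$ under the transported form $\tilde{Q}_\dd$, using the general fact $M_\Vb^* = \tfrac12 M_{\Vb^\perp}$ for the standard pairing together with the observation that $J$ and $2$-adic unit scalings preserve $\Vb_4^\perp$. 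Both routes are short; the paper's is slightly more direct since it treats $L_\dd$ and $L_\dd'$ in parallel, while yours has the small advantage of never needing to write down $L_\dd' \otimes \Z_2$ explicitly.
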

\begin{proof}
  It is clear from \eqref{eq:ideals2} that the images of $\Pc_\dd \oplus \Nc_\dd$ and $\Pc_\dd' \oplus \Nc_\dd'$ under the isometry in \eqref{eq:iotad2} are $M^2_{\Vb_2}$ and $  \frac{1}{2\dd_2} M_{\Vb_2}^2$ respectively.
  For $L_\dd$, we have $L_\dd \otimes_{} \Z_2 = M_{(0, 0, 0, 1)^\perp} \subset \Z_2^4$.
  Therefore, its image under the isometry in \eqref{eq:iotad2} is
  $$
P_2 \cdot M_{(0, 0, 0, 1)^\perp}  = M_{\overline{P_2} \cdot (0,0,0,1)^\perp} = M_{\Vb_4}.
$$
where we have used \eqref{eq:MVtrans}.
  Similarly, $L_\dd' \otimes_{} \Z_2 = \frac{1}{2\dd_2} M_{\langle (1, 0, 0, 0) \rangle}$ and
  $$
  P_2 \cdot M_{\langle (1, 0, 0, 0) \rangle}  = M_{\overline{P_2} \cdot \langle (1,0,0,0) \rangle}
  = M_{\langle(1,1,1,1) \rangle}
  = M_{\Vb_4^\perp}.
$$
    This finishes the proof.
  \end{proof}

Now, we want to apply the map in \eqref{eq:iotad2} to express the $\phi_{\dd, 2}\in S(L_\dd \otimes \Z_2)$ as a Schwartz function
\begin{equation}
  \label{eq:tphi}
  \tilde{\phi}_{\dd, 2} := \phi_{\dd, 2} \circ i_{\dd, 2} \in S(M_{\Vb_4}) \subset
 S(M_{\Vb_2}^2) =   S(M_{\Vb_2}) \otimes S(M_{\Vb_2}).
\end{equation}
The function $\phi_{\dd, 2}$ is constant on $2M_2(\Z_2) \subset L_\dd \otimes \Z_2$ with support on $\frac{1}{\dd_2} M_2(\Z_2) \supset \frac{1}{\dd}L_\dd \otimes \Z_2$.
Using \eqref{eq:lattchain} and $\frac{1}{\dd_2}M_{\Vb_4} \subset \frac1{\dd_2}\Zb_2^4$, we see that 
\begin{equation}
    \label{eq:supp}
    \mathrm{supp}(\phi_{\dd, 2}) \subset \frac1{\dd_2} \taf_0 \oplus\af_0 = (\taf_0 \oplus\af_0)'. 
\end{equation}
The restriction of $\iota_{\dd, 2}^{-1}$ to $S(L_\dd \otimes \Z_2)$ only depends on $P_2 \otimes \Z/16\Z$, where $P_2$ is the matrix in \eqref{eq:P}.
By the choice of $z_j$ in Lemma \ref{lem:smallCM}, we know that
$$
P_2 \equiv
-\delta_0^{-1}
  \begin{pmatrix}
    1 & - \overline{\zr} & \zr & -\overline{\zr}\zr \\
    -1 &  {\zr} & -\overline{\zr} & \zr\overline{\zr} \\
    -1 & \overline{\zr} & -\overline{\zr} & \overline{\zr^2} \\
    1 & -{\zr} & {\zr} & -{\zr^2} \\
  \end{pmatrix} \bmod{2^4 \Z_2}
  $$
with $\zr := \frac{1 + \delta_0}{2} \in \Z_2^\times,~ \overline{\zr} := \frac{1 - \delta_0}{2}\in 2\Z_2$.
We can now specialize Lemma 6.3 in \cite{LY} and apply Remark \ref{rmk:LYcompare} to give a precise expression of $\tilde{\phi}_{\dd, 2}$.

\begin{lemma}
  \label{lemma:decompose2}
  In the notations above, we have
\begin{equation}
  \label{eq:phidfactor}
  \tilde{\phi}_{2\dd_2}  =
  \begin{cases}
\cha({M_{\Vb_4}})
,& \dd_2 = 1,\\
\phi_0\otimes( \phi_{2}^3 - \phi_2^1)
+ (\phi_{2}^1 - \phi_2^3)\otimes  \phi_0
& \dd_2 = 2,\\
  2\lp (\phi_0 + \phi_{2}^1 - \phi_{2}^3)\otimes (\phi_{4}^7 - \phi_4^3)
+ (\phi_{4}^1 - \phi_4^5) \otimes (\phi_0 - \phi_{2}^1 + \phi_2^3)\rp,& \dd_2 = 4,\\
4 \lp
 \begin{split}
&    (\phi_0 + \phi_{2}^3 - \phi_2^1)\otimes( \phi_{8}^{15} - \phi_{8}^7) + (\phi_{8}^1 - \phi_{8}^9)\otimes( \phi_0 + \phi_{2}^1 - \phi_2^3)  \\
&    + (\phi_{4}^7 - \phi_4^3)\otimes( \phi_{8}^{11} - \phi_{8}^3)  + (\phi_{8}^5 - \phi_{8}^{13})\otimes( \phi_{4}^1 - \phi_4^5)
 \end{split}
\rp,
    & \dd_2 = 8,
  \end{cases}
\end{equation}
  where for $m = 2, 4, 8$ and $r \in (\Zb/2m\Zb)^\times$
  \begin{align*}
    \phi_0 &:= \cha \lp M_{\Vb_2}\rp - \cha \lp M_{\Vb_2} + (1, 0)\rp
=    \phi_{(0, 0)} +
    \phi_{(1, 1)}
    - \phi_{(1, 0)}
    - \phi_{(0, 1)},\\
  \phi_{m}^r &:= \sum_{a, b \in \Zb/2m\Z,~ ab = r} \phi_{\tfrac{1}{m}(a, b)}
\end{align*}
are elements in $S((2\Zb_2)^2)$.
\end{lemma}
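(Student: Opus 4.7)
The plan is to reduce the claim to a finite calculation and then invoke the analogous decomposition already carried out in \cite[Lemma 6.3]{LY}. First, by its definition \eqref{eq:u}, the function $\phi_{\dd, 2}$ is supported inside $\frac{1}{\dd_2}L_\dd \otimes \Z_2$ and is constant on cosets of $L_\dd \otimes \Z_2$. Under the isometry $i_{\dd, 2}$, Proposition \ref{prop:PNL2} identifies $L_\dd \otimes \Z_2$ with $M_{\Vb_4}$, so $\tilde{\phi}_{2\dd_2} := \phi_{\dd, 2}\circ i_{\dd, 2}$ is supported in $\frac{1}{\dd_2}M_{\Vb_4}$ and constant modulo $M_{\Vb_4}$. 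In particular $\tilde{\phi}_{2\dd_2}$ is specified by its values on a fixed set of representatives for $\frac{1}{\dd_2}M_{\Vb_4}/M_{\Vb_4}$, which is finite.

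Next, since $\dd_2 \le 8$, the restriction of $i_{\dd, 2}$ to the relevant cosets only depends on $P_2 \bmod 16$, which is the explicit matrix displayed immediately after \eqref{eq:P} thanks to the congruence conditions on the $z_j$ imposed in Lemma \ref{lem:smallCM}. Combined with the explicit formula for $\chi_2$ in Proposition \ref{prop:chival}, the definition \eqref{eq:u} of $\phi_{\dd, 2}$ becomes a finite character sum that may be evaluated coset by coset. This is precisely the computation performed in \cite[Lemma 6.3]{LY}. We invoke that result and correct by Remark \ref{rmk:LYcompare} to account for the fixed sign discrepancy between the identification of $V_\dd \otimes \Q_2$ with $\Q_2^4$ used there and the one in \eqref{eq:iotad2}.

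The final step is to regroup the resulting signed sum of characteristic functions of cosets in $\frac{1}{2\dd_2} M_{\Vb_2}^2$ into the advertised tensor-product form. The auxiliary Schwartz functions $\phi_0$ and $\phi_m^r$ are set up precisely so that $\phi_0$ is a signed sum over $M_{\Vb_2}/2M_{\Vb_2}$ and $\phi_m^r$ packages together the points in $\frac{1}{m}\Z_2^2$ whose coordinates multiply to $r$ modulo $2m$; matching them against the cosets cut out by $P_2 \bmod 2\dd_2$ is a direct combinatorial check. For $\dd_2 = 1$ the statement collapses to $\tilde{\phi}_2 = \cha(M_{\Vb_4})$, while for $\dd_2 = 2, 4, 8$ one verifies the claimed formulas one by one.

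The main obstacle is the bookkeeping for $\dd_2 = 8$, where the character $\chi_2^3$ produces many cross-terms and the sum over $s \in (\Z/8\Z)^\times$ in \eqref{eq:u} yields four weighted layers with the factor $\zeta_{16}^{3s \cdot a(b + c/2)}$; keeping track of how these weights redistribute across the 256 cosets of $\frac{1}{8}M_{\Vb_4}/M_{\Vb_4}$ is heavy but purely mechanical. Since this is essentially an unpacking of \cite[Lemma 6.3]{LY} through the explicit matrix $P_2 \bmod 16$, no new ideas are required beyond careful transcription and sign tracking.
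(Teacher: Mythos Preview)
Your proposal is correct and follows essentially the same route as the paper: reduce to \cite[Lemma~6.3]{LY} via the comparison of identifications in Remark~\ref{rmk:LYcompare}, and then read off the answer. The paper's proof additionally flags two small normalization discrepancies you should track explicitly when transcribing from \cite{LY}: the quadratic form there differs from $\tilde Q_\dd$ by a sign, and the local lattice in \cite{LY} is scaled by $2$ (so their Schwartz functions live in $S(\Zb_2^2)$ rather than $S((2\Zb_2)^2)$); both are absorbed in your ``sign tracking'' but are worth naming.
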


\begin{proof}
  By Remark \ref{rmk:LYcompare} and the discussions before the lemma, we see that the $\tilde{\phi}_{2\dd_2}$ here  is the same as the $\tilde{\phi}_{\dd_2, 2}$  in Lemma 6.3 in \cite{LY} with $\delta = \delta_0 \delta_0 = D_0 \equiv 1 \bmod{8}$. This then gives us the result after taking into consideration the sign difference between the quadratic forms here and in Lemma 6.3 loc.\ cit. Also note that the local lattice here is not scaled by~2 as in \cite{LY}, where the Schwartz functions are in $S(\Zb_2^2)$.
\end{proof}

\begin{proposition}
  \label{prop:d2contr}
  For any $\dd_2 \mid 8$,  the quantity $  \delta_2(m, \alpha; \dd_2)$ defined in \eqref{eq:delta6}
    is given by
  \begin{align*}
    \delta_2(m, \alpha; 1)
    &=
      \begin{cases}
        1 & \text{ if }o_2(m) = 0 \text{ and } 2 \mid \alpha,\\
        o_2(m) - 1& \text{ if }o_2(m) \ge 2 \text{ and } m \neq 0,\\
        1 & \text{ if } m = 0,\\
      \end{cases}\\
    \delta_2(m, \alpha; 2)
    &=
      \begin{cases}
        1 & \text{ if } 4 \mid \alpha,\\
        \pm 1 & \text{ if } m \equiv 3 \pm 2 \bmod{8} \text{ and } 2 \| \alpha,\\
        1 & \text{ if } o_2(m) = 2,\\
        o_2(m) - 5& \text{ if }o_2(m) \ge 3 \text{ and } m \neq 0,\\
        1 & \text{ if } m = 0,
      \end{cases}
    \end{align*}
    \begin{align*}
    \delta_2( m, \alpha; 4)
    &= 2\cdot
      \begin{cases}
        1 & \text{ if } 8 \mid \alpha,\\
        \pm 1 & \text{ if } m \equiv 9 \pm 8 \bmod{32} \text{ and } 4 \| \alpha,\\
        \mp 1 & \text{ if } m \equiv 5 \bmod{8} \text{ and } \tm \equiv \pm 4 \bmod{16}, \\
        \mp 1 & \text{ if } m \equiv \pm 4 \bmod{16},\\
        1 & \text{ if }o_2(m) = 4,\\
        o_2(m) - 7& \text{ if }o_2(m) \ge 5 \text{ and } m \neq 0,\\
        1 & \text{ if } m = 0,\\
      \end{cases}\\
      \delta_2(m, \alpha; 8)
    &= 4\cdot
      \begin{cases}
        1 & \text{ if } 16 \mid \alpha,\\
        \pm 1 & \text{ if } m \equiv 33 \pm 32 \bmod 128 \text{ and } 8 \| \alpha,\\
        \pm 1 & \text{ if } m \equiv 1 \pm 16 \bmod{64} \text{ and } 4 \| \alpha,\\
        \mp 1 & \text{ if } m \equiv 5 \bmod{8} \text{ and } \tm \equiv 20 \pm 8 \bmod{32}, \\
        \pm 1 & \text{ if } m \equiv 4 \pm 8 \bmod{32},\\
        \mp 1 & \text{ if } m \equiv \pm 16 \bmod{64},\\
        1 & \text{ if }o_2(m) = 6,\\
        o_2(m) - 9& \text{ if }o_2(m) \ge 7 \text{ and } m \neq 0,\\
        1 & \text{ if } m = 0.
      \end{cases}
    \end{align*}
\end{proposition}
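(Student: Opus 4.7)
The plan is to substitute the tensor decomposition of $\tilde\phi_{2\dd_2}$ from Lemma~\ref{lemma:decompose2} into the definition \eqref{eq:deltas} of $\delta_2(s,m,\alpha;\dd_2)$, evaluate each of the resulting sums of local Whittaker integrals using Propositions~\ref{prop4.5}--\ref{prop4.7}, and then extract the constant term of $L_2(s+1,\epsilon)\delta_2/L_2(s,\epsilon)^{\delta_{m=0}}$ at $s=0$ in accordance with \eqref{eq:delta6}. Since $D_0\equiv 1\bmod 8$, the prime $2$ splits in $\kay_{D_0}$, and under \eqref{eq:kay2} the local lattice $\Nc_{\dd_2}\otimes\Z_2$ becomes $M_{\Vb_2}\subset\Z_2^2$ with split quadratic form $-\tfrac{\dd_2 t}{a}x_1 x_2$. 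In the notation of Section~\ref{subsec:Whitt} this corresponds to $F=\Q_2$, $\OO_\Delta=\Z_2\oplus\Z_2$, $o(\Delta)=0$, and $\kappa=-\dd_2 t u$ with $u\in\Z_2^\times$; the character $\chi_{-\pi}$ appearing in Proposition~\ref{prop4.6} is the usual non-trivial unramified quadratic character on $\Z_2^\times$.

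For each tensor summand $\phi\otimes\phi'$ in \eqref{eq:phidfactor}, with $\phi,\phi'\in\{\phi_0,\phi_m^r\}$, evaluation of $\phi(\alpha/\dd_2)$ at the given $\alpha\in\tfrac{1}{2\sqrt{D}}\taf\otimes\Z_2$ is non-zero exactly on a prescribed $\dd_2$-adic coset, producing the divisibility conditions on $\alpha$ stated in the proposition. The inner sum $\sum_\mu \phi'(\mu/\dd_2)W_{m/\dd_2,2}(s,\mu/\dd_2;\Nc_{\dd_2})$ is then evaluated coset by coset: Proposition~\ref{prop4.5} handles $\phi'=\phi_0$ (so $\mu\in\Nc$); Proposition~\ref{prop4.6} handles $\phi'=\phi_m^r$ with exactly one coordinate of $\mu$ non-integral (so $\mu_1=0$ after a relabeling, or vice versa); and Proposition~\ref{prop4.7} handles the remaining $\phi_m^r$ where both coordinates are non-integral. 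The key input is the quadratic character value $\chi_{-\pi}(\kappa\cdot\alpha(\mu,m))$ appearing in Proposition~\ref{prop4.6}: it is what distinguishes the $+1$ and $-1$ contributions in the final formulas, and depends on the square class in $\Z_2^\times$ of a specific residue of $m$ or, equivalently via the coupling $\tfrac{t}{a}\Nm(\alpha)=\tm$, of $\tm=1-m$, modulo a suitable power of $2$.

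Since $L_2$-factors and Whittaker values are explicit polynomials in $X=2^{-s}$, extracting the constant term at $s=0$ reduces to setting $X=1$ after routine cancellation of the $(1-X)$ and $(1-X^2)$ factors against $L_2(s+1,\epsilon)^{-1}$. Contributions with $o_2(m)$ large come from the geometric-series part of Proposition~\ref{prop4.5}(3), and when $m\neq 0$ they produce the linear-in-$o_2(m)$ terms visible in the statement; the value $m=0$ triggers the separate case using Proposition~\ref{prop4.5}(5) and the $L_2(s,\epsilon)$ denominator. Small $o_2(m)$ combined with the various residues of $m$ and $\tm$ yields the finitely many discrete cases with coefficients $\pm 1$, one case for each basic Schwartz function appearing in \eqref{eq:phidfactor}.

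The principal obstacle is the bookkeeping: for $\dd_2=8$ the decomposition of $\tilde\phi_{16}$ has eight summands, each requires a sum over cosets of $\mu$, and one must track the $2$-adic valuation and square class of $\kappa\cdot\alpha(\mu,m)$ in each case before reassembling. The statement can then be verified piece by piece by comparing the discrete residue conditions produced by the $\chi_{-\pi}$ evaluations against those listed; consistency across $\dd_2\in\{1,2,4,8\}$ is guaranteed by the congruence $a\equiv 1\bmod 48$ ensured by Lemma~\ref{lem:smallCM}, which transfers residue conditions on $\alpha\in\taf$ into the clean residue conditions on $m,\tm$ recorded in the proposition.
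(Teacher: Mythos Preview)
Your overall strategy matches the paper: plug the tensor decomposition of $\tilde\phi_{2\dd_2}$ from Lemma~\ref{lemma:decompose2} into the definition of $\delta_2(s,m,\alpha;\dd_2)$, evaluate the resulting coset sums of local Whittaker functions, and read off the constant term at $s=0$. The paper carries this out in detail only for $\dd_2=2$, leaving the other cases to the reader, exactly as you describe the bookkeeping burden.

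There is, however, a genuine gap in the tool you invoke. Propositions~\ref{prop4.5}--\ref{prop4.7} are stated under the blanket hypothesis ``assume $p\neq 2$'' at the start of Section~\ref{subsec:Whitt}; the Gauss-sum and quadratic-character manipulations underlying those formulas (in particular the factor $\gamma(b,\psi)$ and the symbol $\chi_{-\pi}$) are not set up for the dyadic case. So you cannot apply them at $p=2$ as written. The paper instead pulls the needed $2$-adic Whittaker values $W^*_{m/\dd,2}(0,\mu)/\gamma(W_2)$ directly from the precomputed Tables~1 and~4 of \cite{LY}, which already handle the split situation $\OO_{D_0}\otimes\Z_2\cong\Z_2^2$ with the appropriate normalization. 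Once those tabulated values are substituted, the rest of your outline (matching $\phi_0,\phi_m^r$ against divisibility and residue conditions on $\alpha$ and $m$, and tracking the linear-in-$o_2(m)$ tail from the $\mu\in\Nc$ piece) goes through as you say.
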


\begin{proof}
We will give the details for $\dd_2 = 2$ and leave the rest of the cases to the reader.
First, we can apply Lemma \ref{lemma:decompose2} to write
  $$
  \phi_{8}\lp \frac{\alpha}{\dd}, \frac{\mu_2}{\dd}\rp
  =
  \phi_0(\alpha/\dd) (\phi^3_2(\mu_2/\dd) - \phi^1_2(\mu_2/\dd)) +
(\phi^1_2(\alpha/\dd) - \phi^3_2(\alpha/\dd))  \phi_0(\mu_2/\dd) .
$$
Using Table 4 in \cite{LY}, we have
\begin{align*}
 & \sum_{\mu_2 \in \Zb_2^2/2M_{\Vb_2}}
  (\phi^3_2(\mu_2/\dd) - \phi^1_2(\mu_2/\dd)) \frac{W^*_{m/\dd, 2}(0, \mu_2/\dd)}{\gamma(W_2)}\\
    &= \sum_{\mu_2 \in (\Zb_2/4\Zb_2)^2} (\phi_{\frac{1}{2}(1, 3)}(\frac{\mu_2}{2\dd_3})
    + \phi_{\frac{1}{2}(3, 1)}(\frac{\mu_2}{2\dd_3})
      - \phi_{\frac{1}{2}(1, 1)}(\frac{\mu_2}{2\dd_3}) - \phi_{\frac{1}{2}(3, 3)}(\frac{\mu_2}{2\dd_3}))
      \frac{W^*_{m/\dd, 2}(0, \mu_2/\dd)}{\gamma(W_2)}\\
  &=  \frac{W^*_{m/\dd, 2}(0, \frac{1}{2}(1, 3)) + W^*_{m/\dd, 2}(0, \frac{1}{2}(3, 1))}{\gamma(W_2)} = 1
\end{align*}
when $o_2(\tm/\dd) \ge 1$, and 0 otherwise. Notice the scaling factor between the quadratic forms here and in \cite{LY} means that the coset $\frac{1}{4}(a, a^{-1} \delta)$ becomes $\frac{1}{2}(a, -a)$ for us.
Now, if we denote $\alpha_2 := \alpha \otimes \Zb_2 \in (2^{-1} \Zb_2)^2$, then $\phi_0(\alpha/\dd)$ becomes
\begin{align*}
  \phi_0(\alpha/\dd) =
  \begin{cases}
    1& \text{ if } \alpha_2 \equiv (0, 0) \text{ or } (2, 2) \bmod{(4\Zb_2)^2},\\
    -1& \text{ if } \alpha_2 \equiv (2, 0) \text{ or } (0, 2) \bmod{(4\Zb_2)^2}.
  \end{cases}
\end{align*}
which also implies $\alpha \in \frac{1}{\sqrt{D}}\taf$.
For the first case, $\alpha_2 \equiv (0, 0) \bmod{(4\Zb_2)^2}$ is equivalent to $4\mid \alpha$.
The condition $\alpha_2 \equiv (2, 2) \bmod{(4\Zb_2)^2}$ is equivalent to $2 \| \alpha $ and $o_2(\Nm(\alpha)) = 2$. The second case happens if and only if $2 \| \alpha$ and $o_2(\tm) = o_2( \Nm(\alpha)) \ge 3$.
Using $\tm + m = 1$, we see that
$$
o_2(\tm) = 2 \Leftrightarrow m \equiv 5 \bmod{8},~
o_2(\tm) \ge 3 \Leftrightarrow m \equiv 1 \bmod{8}.
$$
Putting these together gives us the result when $\dd_2 = 2$ and $o_2(m) = 0$.

Similarly, we can apply Table 1 in \cite{LY} to calculate the  contribution of the other term
\begin{align*}
    &\sum_{\mu_2 \in  \Zb_2^2/2M_{\Vb_2}}
  \phi_0(\mu_2/\dd) \frac{W^*_{m/\dd, 2}(0, \mu_2/\dd)}{\gamma(W_2)} \\
  &= \sum_{\mu_2 \in \Zb_2^2/(4\Zb_2)^2} (\phi_{(0, 0)}(\frac{\mu_2}{2\dd_3}) + \phi_{(1, 1)}(\frac{\mu_2}{2\dd_3}) - \phi_{(0, 1)}(\frac{\mu_2}{2\dd_3} )- \phi_{(1, 0)}(\frac{\mu_2}{2\dd_3}))
    \frac{W^*_{m/\dd, 2}(0, \mu_2/\dd)}{\gamma(W_2)}\\
  &=
    \begin{cases}
1      & \text{ if } o_2(m/\dd) = 1 (\Leftrightarrow \tm \equiv 5 \bmod{8}),\\
o_2(m/\dd) - 4       & \text{ if } o_2(m/\dd) \ge 2 (\Leftrightarrow \tm \equiv 1 \bmod{8}).
    \end{cases}
\end{align*}
On the other hand, $\Nm(\alpha) = \frac{a}{t} \tm \equiv 1 \bmod{4}$ implies that
$$
\phi^1_2(\alpha/\dd) - \phi^3_2(\alpha/\dd) = \phi^1_2(\alpha/\dd) = (\phi_{\frac{1}{2}(1, 1)} + \phi_{\frac{1}{2}(3, 3)})(\alpha/\dd) = 1.
$$
Putting this together gives  the result for $\dd_2 = 2$ and $o_2(m) \ge 2$.
\end{proof}

Now we will add the contributions of $\delta_2(\alpha, m; \dd_2)$ over all $\dd_2 \mid s_2$ for some $s_2 \mid 8$.
\begin{proposition}
  \label{prop:s2contr}
  Let $s_2 \mid 8$ and $\delta_2(m, \alpha; \dd_2)$ be as in the previous proposition. Then for all $\alpha \in \frac{1}{\sqrt{D}} \taf$ with $\tm := \frac{t}{a}\Nm(\alpha) = 1 - m$, we have
  \begin{equation}
    \label{eq:d2id}
    \sum_{\dd_2 \mid s_2} \delta_2( m, \alpha; \dd_2) =
    s_2 
          \sum_{\substack{r_2 \mid s_2\\ \frac{m\tm}{4r_2^2} \equiv 3 \bmod{\frac{s_2}{r_2}}}}
      \sum_{AB = 2r_2}   \rho_2\lp \frac{m}{A^2} \rp
      \mathds{1}_{\taf}\lp \frac{\sqrt{D} \alpha}{B} \rp
  \end{equation}
  where $\mathds{1}_{\taf}$ is the characteristic function of $\taf \subset \kay$.
\end{proposition}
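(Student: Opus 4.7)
The plan is to verify the identity by case analysis on $s_2 \in \{1,2,4,8\}$, since Proposition \ref{prop:d2contr} already packages $\delta_2(m,\alpha;\dd_2)$ into explicit piecewise formulas indexed by $\dd_2$ and by the 2-adic valuations $o_2(m)$ and $o_2(\alpha)$ (equivalently $o_2(\tm)$). Both sides of \eqref{eq:d2id} are supported on $\alpha \in \frac{1}{\sqrt{D}}\taf$, and the fact that 2 splits in $\kay$ (coming from $D_0 \equiv 1 \bmod 8$) gives $\rho_2(n) = o_2(n)+1$ for $n \in \Zb_2$, which is the combinatorial engine matching the ``bulk'' formulas $o_2(m)-c$ appearing in Proposition \ref{prop:d2contr}.

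First I would unfold the right hand side for a given $s_2$ as a double sum indexed by $r_2 \mid s_2$ and the factorizations $AB = 2r_2$, subject to the congruence $m\tm/(4r_2^2) \equiv 3 \bmod (s_2/r_2)$ and the support conditions $A^2 \mid m$ and $B\mid \sqrt{D}\alpha$ coming from the indicator $\mathds{1}_{\taf}$. Using $m + \tm = 1$ and $\tm = (a/t)\Nm(\alpha)$, the congruence condition translates to $\tm \equiv 4r_2^2/(1-4r_2^2) \cdot m \bmod{\cdots}$ and therefore constrains $o_2(\tm)$ (hence $o_2(\alpha)$) in a way that precisely matches the threshold conditions on $\alpha$ in the formulas for $\delta_2(m,\alpha;\dd_2)$. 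I would thus group contributions $(r_2,A,B)$ on the right by the level $o_2(\alpha)$.

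Next I would expand the left hand side $\sum_{\dd_2 \mid s_2} \delta_2(m,\alpha;\dd_2)$ using Proposition \ref{prop:d2contr}. The telescoping structure is the main computational observation: the coefficients $o_2(m) - c_{\dd_2}$ in the ``deep'' cases ($o_2(m)$ large) sum against the multiplicities of $A$ on the right via $\rho_2(m/A^2) = o_2(m) - 2 o_2(A) + 1$, while the finite ``$\pm 1$'' corrections at intermediate $o_2(m)$ match exactly the boundary contributions where the pair $(A,B)$ saturates one of the divisibility conditions. One then verifies that the congruence conditions modulo powers of 2 on the right (classes of $m \bmod 8,16,32,\dots,128$) recover, one to one, the disjoint cases in the formulas for each $\dd_2$.

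The main obstacle is the case $s_2 = 8$: Proposition \ref{prop:d2contr} has roughly a dozen subcases distinguished by $m \bmod 128$ and $o_2(\alpha)$, and the right hand side has up to $8$ admissible triples $(r_2,A,B)$ whose support and congruence conditions interact nontrivially. The strategy to tame this is to stratify by $o_2(\alpha) \in \{0,1,2,3,\ge 4\}$: once $o_2(\alpha)$ is fixed, the condition $\tm = (a/t)\Nm(\alpha)$ pins down $o_2(\tm)$ and the admissible $B$'s, after which each $r_2$ allows at most one congruence class of $m\bmod(s_2/r_2)$, and the matching with Proposition \ref{prop:d2contr} becomes a finite check. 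The cases $s_2 = 1, 2, 4$ are contained in $s_2 = 8$ and serve as warm-ups to fix the pattern; by multiplicativity of the Möbius-type inversion $\delta_2(m,\alpha;s_2) = \sum_{\dd_2 \mid s_2} \mu(s_2/\dd_2) \cdot (\text{RHS at }\dd_2)$, it actually suffices to establish the $s_2 = 8$ case and deduce the smaller ones by restriction.
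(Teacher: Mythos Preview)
Your approach---direct case verification using the explicit formulas of Proposition~\ref{prop:d2contr}---is exactly what the paper does, and the stratification you propose (by $o_2(\alpha)$, equivalently $o_2(\tm)$) will work. The paper organizes the case split slightly differently: since $m+\tm=1$, exactly one of $m,\tm$ is even, and in the inner sum $\sum_{AB=2r_2}$ the factor $\rho_2(m/A^2)$ forces $A$ even when $\tm$ is odd, while $\mathds{1}_{\taf}(\sqrt{D}\alpha/B)$ forces $B$ even when $m$ is odd. Hence for each $r_2$ at most one pair $(A,B)$ contributes, and the two regimes $o_2(m)=0$ versus $o_2(m)\ge 2$ (the case $o_2(m)=1$ is empty since $m+\tm=1$) separate cleanly. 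This cuts the number of subcases roughly in half compared to stratifying by $o_2(\alpha)$ directly, but the content is the same.

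One point to correct: your closing claim that ``it actually suffices to establish the $s_2=8$ case and deduce the smaller ones by restriction'' via M\"obius inversion is not valid as stated. The identity for $s_2=8$ is a single linear relation among the four values $\delta_2(m,\alpha;\dd_2)$, $\dd_2\mid 8$; it does not determine the partial sums for $s_2=1,2,4$. M\"obius inversion runs in the opposite direction: knowing the identity for \emph{all} $s_2\mid 8$ is equivalent to knowing each $\delta_2(m,\alpha;\dd_2)$ individually. Since you already plan to treat $s_2=1,2,4$ as warm-ups, this is harmless, but the sentence should be dropped.
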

\begin{remark}
  Since $m + \tm = 1$, at most one of the terms in the sum over $AB = 2r_2$ is non-zero.
\end{remark}

\begin{proof}
  The proof comes from directly applying Prop.\ \ref{prop:d2contr}.
  If neither of $m$ and $\tm$ is 2-integral, then both sides vanishes identically. Otherwise, $m$ and $\tm$ will have different parity and one term in the summand on the RHS of \eqref{eq:d2id} will automatically vanishes.

  When $o_2(m) = 0$, the first term always vanishes and the second term becomes
  $$
s_2     \sum_{\substack{r_2 \mid s_2\\ \frac{m\tm}{4r_2^2} \equiv 3 \bmod{\frac{s_2}{r_2}}}}
\mathds{1}_{2}( \frac{\alpha}{2r_2}).
$$
Notice that at most one term in the sum above does not vanish as $\Nm(\alpha) \equiv \tm \bmod{16}$.
  For $s_2 = 1$, this matches with the LHS.
  For $s_2 = 2$, the LHS is non-zero in the following cases
  $$
  \delta_2(1, m , \alpha) + \delta_2(2, m, \alpha) =
  \begin{cases}
    2 & \text{ if } 4 \mid \alpha (\Leftrightarrow m \equiv 1 \bmod{16}), \\
2 &  \text{ if } 2 \| \alpha \text{ and } m \equiv 5 \bmod{8},
  \end{cases}
  $$
  which also matches with the RHS.
  The cases with $s_2 = 4, 8$ are similar and we omit the details here.

  When $o_2(m) \ge 1$, then $\tm$ is odd and the second term always vanishes and the first term becomes
  $$
s_2     \sum_{\substack{r_2 \mid s_2\\ \frac{m\tm}{4r_2^2} \equiv 3 \bmod{\frac{s_2}{r_2}}}}
\rho_2(\frac{m}{4r_2^2}),
$$
where again at most one term does not vanish.
For $s_2 = 1$, this equals to $o_2(m/4) + 1 = \delta_2(1, m, \alpha)$.
For $s_2 = 2$, the LHS becomes
  $$
  \delta_2( m , \alpha; 1) + \delta_2( m, \alpha; 2) =
  \begin{cases}
    2 & \text{ if } o_2(m) = 2,\\
2(o_2(m) - 3) &  \text{ if } o_2(m) \ge 3,
  \end{cases}
  $$
  which again matches with the RHS.
  The cases with $s_2 = 4, 8$ are similar and we omit the details here.
  \end{proof}

\subsubsection{$p = 3$.}

The contribution from the $3$-part is summarized in the following concise proposition.
\begin{proposition}
  \label{prop:s3contr}
  Let $s_3 \mid 3$ and $s_3' := \gcd(s_3, 3^{1 - (\tfrac{D}3)}) \mid s_3$. 
    Then for all $\alpha \in \frac{1}{\sqrt{D}} \taf$ with $\tm := \frac{t}{a}\Nm(\alpha) = 1 - m$, we have
  \begin{equation}
    \label{eq:s3contr}
    \begin{split}
s_3^{-1}          \sum_{\dd_3 \mid s_3} \delta_3(m, \alpha; \dd_3) &=
      \sum_{\substack{s'_3 \mid r_3 \mid s_3\\ \frac{m\tm}{r_3^2} \equiv 1 \bmod{\frac{s_3}{r_3}}}}
          \sum_{AB = r_3}   \rho_3\lp \frac{m}{A^2} \rp \mathds{1}_{\taf}\lp \frac{\sqrt{D} \alpha}{B} \rp, \\
s_3^{-1}          \sum_{\dd_3 \mid s_3} \delta_3'(m, \alpha; \dd_3) &=
\rho'_{3s_3}(m) :=
    \begin{cases}
    \frac{o_3(m/s_3) + 1}{2}& \text{ if } 2 \nmid o_3(m) \ge 1, m \neq 0 \text{ and } \lp \frac{D}{3}\rp = -1,\\
      0&\text{ otherwise.}
          \end{cases}
    \end{split}
  \end{equation}
\end{proposition}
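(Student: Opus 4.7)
The plan is to follow the same blueprint used for the prime $2$ in Propositions \ref{prop:d2contr} and \ref{prop:s2contr}, but exploit the fact that $s_3\in\{1,3\}$ to streamline the case analysis. Since $3\nmid D$ by \eqref{eqd}, the prime $3$ is unramified in $\kay_{D_0}$, so by \eqref{eq:kay2}-style identifications, $\tilde\af\otimes\Z_3$ and $\af\otimes\Z_3$ are both isomorphic (as $\Z_3$-lattices with a quadratic form) to $\OO_{D_0}\otimes\Z_3$, and the local Whittaker values in Propositions \ref{prop4.5}-\ref{prop4.7} are available. I will first derive the $p=3$ analogue of Lemma \ref{lemma:decompose2} by computing $\tilde\phi_{3\dd_3}:=\phi_{\dd_3,3}\circ i_{\dd_3,3}\in S(\tilde\af\otimes\Z_3)\otimes S(\af\otimes\Z_3)$ explicitly for $\dd_3=3$ (the case $\dd_3=1$ just gives the characteristic function of the lattice). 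The decomposition will involve the three nontrivial characters of $(\Z/3)^\times$ and the involution exchanging the two cosets in $\frac13\OO_{D_0}/\OO_{D_0}$ of given norm class, mirroring the combinatorics of Lemma \ref{lemma:decompose2}.

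With this decomposition in hand, I will substitute into \eqref{eq:deltas} and compute $\delta_3(s,m,\alpha;3)$ by splitting the sum over $\mu\in\tfrac{1}{\sqrt D}\af_0/3\af\otimes\Z_3$ into the pieces dictated by the decomposition, then inserting the Whittaker function formulas of Propositions \ref{prop4.5}-\ref{prop4.7}. Reading off the value at $s=0$ (divided by $L_3(s,\epsilon)^{\delta_{m=0}}$ and multiplied by $L_3(s+1,\epsilon)$) and the $s$-derivative, per \eqref{eq:delta6}, yields explicit formulas for $\delta_3(m,\alpha;\dd_3)$ and $\delta'_3(m,\alpha;\dd_3)$ in terms of $o_3(m)$, $o_3(\alpha)$, the quadratic symbol $\chi_{-3}$, and most importantly $\bigl(\tfrac{D_0}{3}\bigr)$.

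Finally, I will verify \eqref{eq:s3contr} case-by-case on $s_3\in\{1,3\}$ and the splitting behavior of $3$ in $\kay_{D_0}$. When $s_3=1$ both identities reduce to standard unramified statements (the first to $\rho_3(m)\mathds{1}_{\tilde\af}(\sqrt D\alpha)$, consistent with Proposition \ref{prop:unramified}). When $s_3=3$ the congruence condition $\tfrac{m\tm}{r_3^2}\equiv 1\bmod\tfrac{s_3}{r_3}$ together with $s_3'\mid r_3$ cuts out exactly the nonzero terms produced by the explicit formulas: in the split case $\bigl(\tfrac{D_0}{3}\bigr)=1$ we get $s_3'=1$ and both $r_3=1,3$ contribute, reproducing the sum; in the inert case $\bigl(\tfrac{D_0}{3}\bigr)=-1$ we get $s_3'=s_3=3$, forcing $r_3=3$, and the derivative $\delta'_3$ collapses to $\tfrac{o_3(m/3)+1}{2}$ precisely when $o_3(m)\geq 1$ is odd, matching $\rho'_{3s_3}(m)$. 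The main obstacle will be organizing the derivative computation in the inert case: because $3\in\Diff$ forces $W_{m/\dd,3}(0,\mu/\dd;\Nc_\dd)=0$ at exactly one $\dd$, the derivative picks up a log-free correction term that must be tracked carefully through the $\phi_\mu$-decomposition to show that it sums to $\rho'_{3s_3}(m)$ rather than to a larger quantity involving $o_3(\alpha)$.
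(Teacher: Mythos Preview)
Your proposal is correct and follows essentially the same approach as the paper: both split into the cases $\bigl(\tfrac{D}{3}\bigr)=\pm 1$, give an explicit decomposition of $\tilde\phi_9$ in each case (the paper records these as Lemma \ref{lemma:decompose3a} in the split case and an analogous unnumbered lemma in the inert case), compute $\delta_3(m,\alpha;3)$ and $\delta_3'(m,\alpha;3)$ from the decomposition, and then verify \eqref{eq:s3contr} for $s_3\in\{1,3\}$ by direct comparison (Lemma \ref{lem:s3contr1} and Proposition \ref{prop:s3contr2}). The only minor difference is that the paper imports the Whittaker-function values from the tables in \cite{LY} rather than reproving them from Propositions \ref{prop4.5}--\ref{prop4.7}, but this is the same computation.
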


We will spend the rest of the section proving this result.
If $s_3 = 1$, this is clear as 
\begin{equation}
  \label{eq:d3=1}
  \begin{split}
    \delta_3(m, \alpha; 1) &= \rho_3(m)~\text{ if } 2 \mid o_3(m) \ge 0 ,\\
    \delta_3'(m, \alpha; 1) &= \frac{o_3(m) + 1}{2} = \sum_{j \in \Nb} \rho_3(m/3^j) ~\text{ if } 2 \nmid o_3(m) \ge 1 \text{ and } \lp \frac{D}{3}\rp = -1.
  \end{split}
\end{equation}
Notice that $\rho'_{3s_3}(m)$ defined in \eqref{eq:s3contr} becomes $\rho'_3(m)$ in \eqref{eq:rho'} when $s_3 = 1$.
Also, the first equation in \eqref{eq:d3=1} is always 1 for $m = 0$, and therefore the second equation is only relevant for $m \neq 0$.

If $\dd_3 = 3$, recall that $\tilde{\phi}_{3\dd_3} = \tilde{\phi}_9 \in S(\OO_D) \otimes S(\OO_D)$ is the Schwartz function defined in \eqref{eq:tphi}.
The quadratic form on $(\alpha, \mu)\in (\kay_D \otimes \Zb_3)^2$ is given by $\tilde{Q}(\alpha, \mu) = \frac{3 t}{a} (\Nm(\alpha) - \Nm(\mu))$.
Since $3 \mid t-a$, we can work with the quadratic form $3(\Nm(\alpha) - \Nm(\mu))$.

There are now 2 cases to consider.
\begin{itemize}
\item
$\lp \frac{D_i}{3} \rp = 1$
\item
$\lp \frac{D_i}{3} \rp = -1$
\end{itemize}
The first case is similar to the case $p = 2$ considered above.
We have $\OO_D = \Zb_3^2$ and fix $\delta_0 \in 1 + 3\Zb_3$ such that
$$
\delta_0^2 = D_0.
$$
%
Then the map $i_{\dd, 3}$ modulo $3\Z_3$, in which case it is the same as given by $z_j' := \frac{b + \sqrt{D_0}}{2}$.
We can now apply Lemma 6.6 in \cite{LY} to these $z_j'$ and express $\tilde{\phi}_{\dd, 3}$ explicitly as follows.

\begin{lemma}
  \label{lemma:decompose3a}
  In the notations above, we have
$$
\tilde{\phi}_{9} = \phi_0 \otimes\phi_{-1} + \phi_{1}\otimes \phi_0 + 2 \phi_{-1}\otimes \phi_{1},
$$
where
\begin{align*}
  \phi_0 &:= 3\phi_{(0, 0)} - \sum_{a, b \in \Z/3\Z,~ ab = 0} \phi_{\tfrac{1}{3}(a, b)},~
\phi_{\pm 1} := \phi_{\tfrac{1}{3}(1, \pm 1)} + \phi_{\tfrac{1}{3}(2, \pm 2)}
\end{align*}
are in $S(\Zb_3^2)$.
\end{lemma}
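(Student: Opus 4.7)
The plan is to follow the template used for the prime $p = 2$ in Proposition \ref{prop:PNL2} and Lemma \ref{lemma:decompose2}, reducing the computation of $\tilde\phi_9$ to a direct invocation of Lemma 6.6 in \cite{LY}. The key point is that $\phi_{9,3}$ factors through the reduction map $\frac{1}{3}L_\dd \otimes \Zb_3 \to \frac{1}{3}L_\dd/L_\dd \otimes \Zb_3$, so the pullback $\tilde\phi_9 = \phi_{9,3}\circ i_{\dd,3}$ depends only on $i_{\dd,3}$ modulo $3\Zb_3$.

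First, since $3$ splits in $\kay_{D_0}$ by hypothesis, I would fix $\delta_0 \in 1 + 3\Zb_3$ with $\delta_0^2 = D_0$ and use the isometry
$$
\kay_{D_0} \otimes \Zb_3 \xrightarrow{\sim} \Zb_3^2, \qquad \alpha + \beta\sqrt{D_0} \mapsto (\alpha + \beta\delta_0,\ \alpha - \beta\delta_0),
$$
which converts the norm form into $(x_1,x_2) \mapsto x_1 x_2$. Since $\gcd(a_1 a_2, 3) = 1$ by Lemma \ref{lem:smallCM}, this identification sends both $\taf \otimes \Zb_3$ and $\af \otimes \Zb_3$ onto $\Zb_3^2$, and the embedding $i_{\dd,3}$ becomes multiplication by a matrix $P_3$ of exactly the same formal shape as $P_2$ in \eqref{eq:P}.

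Second, using the congruences $a_i \equiv t_i$ and $b \equiv 1 \bmod{48}$ from Lemma \ref{lem:smallCM}, one verifies that $\zr_i \equiv \tfrac{1+\delta_0}{2}$ and $\overline{\zr_i} \equiv \tfrac{1-\delta_0}{2} \bmod 3\Zb_3$ for $i = 1,2$. Therefore $P_3 \bmod 3\Zb_3$ is independent of the particular pair $(z_1,z_2)$ and agrees with the matrix associated to the reference CM point $z'_j = \tfrac{b + \sqrt{D_0}}{2}$. Consequently $\tilde\phi_9$ is unchanged when $(z_1,z_2)$ is replaced by $(z'_1,z'_2)$, and the identity to be proved becomes a direct special case of Lemma 6.6 in \cite{LY} with $\delta = D_0$. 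Unpacking the formula given there and abbreviating the resulting combinations of coset characteristic functions by $\phi_0$ and $\phi_{\pm 1}$ yields precisely the claimed decomposition.

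The only obstacle I anticipate is reconciling the normalization of the quadratic form used here, namely $\tilde Q_\dd(\alpha,\mu) = \tfrac{3t}{a}(\Nm(\alpha) - \Nm(\mu))$, with the form $3\det$ on $L_\dd \otimes \Zb_3$ used in \cite{LY}. Because $3 \mid a - t$ and $\tfrac{t}{a} \in \Zb_3^\times$, these two forms differ by a $3$-adic unit, which acts trivially on the finite quadratic module $L'_\dd/L_\dd \otimes \Zb_3$ at issue. Hence the Weil-representation data on both sides are literally the same, and no further adjustment is needed.
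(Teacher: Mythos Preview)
Your proposal is correct and follows essentially the same approach as the paper: reduce $\tilde\phi_9$ to its value modulo $3\Zb_3$, use the congruences from Lemma \ref{lem:smallCM} to replace $(z_1,z_2)$ by the reference points $z_j' = \tfrac{b+\sqrt{D_0}}{2}$, and then invoke Lemma 6.6 of \cite{LY}. Your remarks on the normalization of the quadratic form are a reasonable clarification that the paper leaves implicit.
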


\begin{lemma}
  \label{lemma:delta3}
  The quantity $  \delta_3(m, \alpha; 3)$
  defined in \eqref{eq:delta6}
  is given by
  \begin{align*}
    \delta_3(m, \alpha; 3)
    &=
      \begin{cases}
        2&\text{if } o_3(m(1-m)) = 0,\\
        2(o_3(m) - 2)&\text{if } o_3(m) = o_3(m(1-m)) \ge 1 \text{ and } m \neq 0,\\
        2 & \text{if } m = 0,\\
        3 \mathds{1}_3(\alpha/3) - 1&\text{if } o_3(m(1-m)) > o_3(m) = 0.
      \end{cases}
  \end{align*}
  Otherwise it is 0.
\end{lemma}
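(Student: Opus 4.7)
\medskip
\noindent\textit{Proof plan for Lemma \ref{lemma:delta3}.} The plan is to substitute the explicit decomposition of $\tilde\phi_9$ from Lemma \ref{lemma:decompose3a} into the definition of $\delta_3(s,m,\alpha;3)$, evaluate the resulting Whittaker sums using the local formulas of \S\ref{subsec:Whitt}, and then extract the constant term of the expansion in \eqref{eq:delta6}. Since $3 \nmid t$ by \eqref{eqd}, we have $3^{-o_3(3t)} = 1/3$, so
\[
\delta_3(s,m,\alpha;3) = \tfrac{1}{3}\!\!\sum_{\mu \in \frac{1}{\sqrt{D}}\af_0/3\af \otimes \Zb_3}\!\! \tilde\phi_9\!\lp\tfrac{\alpha}{3},\tfrac{\mu}{3}\rp W_{m/3,3}\!\lp s,\tfrac{\mu}{3};\Nc_3\rp.
\]
Fixing $\delta_0 \in 1 + 3\Zb_3$ with $\delta_0^2 = D_0$ gives the isometry $\kay_{D_0}\otimes\Zb_3 \cong \Zb_3 \oplus \Zb_3$, under which $\tilde\af\otimes\Zb_3 \cong \af\otimes\Zb_3 \cong \Zb_3^2$ (as $\gcd(a,3)=1$). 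Writing $\alpha \leftrightarrow (\alpha_1,\alpha_2)$ with $\alpha_1\alpha_2 = \Nm(\alpha)$, the three functions $\phi_0,\phi_{\pm 1}$ of Lemma \ref{lemma:decompose3a} detect the residue $(\alpha_1,\alpha_2)\bmod 3$: $\phi_{\pm 1}(\alpha/3)$ is the characteristic function of units with $\Nm(\alpha)\equiv \pm 1 \bmod 3$, while $\phi_0(\alpha/3) = 3\mathds{1}_3(\alpha/3) - \mathds{1}_{3\mid\Nm(\alpha)}$.

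\smallskip
Once the decomposition is in place, the lemma reduces to evaluating three local Whittaker sums $S_j(s) := \sum_\mu \phi_j(\mu/3) W_{m/3,3}(s,\mu/3;\Nc_3)$ for $j\in\{0,\pm 1\}$. For each residue class of $\mu/3$, the relevant integral is read off from Propositions \ref{prop4.5}, \ref{prop4.6}, \ref{prop4.7}, applied with the unramified local data $E/F = \Qb_3(\sqrt{D_0})/\Qb_3$ (so $o(\Delta)=0$, $\chi_\Delta(3)=1$) and scaling constant $\kappa$ a $3$-adic unit. In particular $W_{m/3,3}(s,0;\Nc_3)$ is given by Proposition \ref{prop4.5}(3), and for $\mu \ne 0$ with $\phi_{\pm 1}(\mu/3)\neq 0$ one is in the regime $o(\mu,m/3)<o(\mu)$ of Proposition \ref{prop4.7}(1). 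After dividing by $L_3(s+1,\epsilon) = (1-3^{-s-1})^{-1}$ and, in the case $m=0$, multiplying by $L_3(s,\epsilon) = (1-3^{-s})^{-1}$, the resulting expressions are truncated geometric series in $X = 3^{-s}$, whose specialization at $s=0$ gives the constant term.

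\smallskip
With the three $S_j(0)$ in hand, the final step is the case analysis according to the $3$-adic valuations of $m$ and $\tm = 1-m$. Since $(m,\tm)$ sum to $1$, at most one of them is divisible by $3$, which dictates whether $\alpha$ lies in the support of $\phi_{\pm 1}$ or $\phi_0$ and gives the four claimed cases. The regime $o_3(\tm)=0$, $o_3(m)\geq 1$ (or $m=0$) makes $\Nm(\alpha)$ a $3$-adic unit, so only $\phi_{\pm 1}(\alpha/3)$ contributes; summing $\phi_1(\alpha/3) S_0(0) + 2\phi_{-1}(\alpha/3) S_1(0)$ produces the stated $2$ or $2(o_3(m)-2)$. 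The last regime $o_3(\tm)\geq 1$, $o_3(m)=0$ is the one where $\phi_0(\alpha/3)$ is nontrivial, and it is here that the coefficient $3\mathds{1}_3(\alpha/3)-1$ of the lemma emerges directly from the two-term structure of $\phi_0$.

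\smallskip
The main obstacle I expect is precisely this last case: tracking the cancellation between the $3\phi_{(0,0)}$ term of $\phi_0$ and the five cosets $\phi_{(a,b)/3}$ with $ab \equiv 0\bmod 3$, while simultaneously handling the different regimes of Propositions \ref{prop4.5}--\ref{prop4.7} that apply to each coset representative. A careful separation of the $\mu = 0$ contribution (via Proposition \ref{prop4.5}) from the $\mu_1=0$ or $\mu_2=0$ contribution (via Proposition \ref{prop4.6}) is what ultimately collapses into the compact formula $3\mathds{1}_3(\alpha/3)-1$, and this is the step that requires the most bookkeeping.
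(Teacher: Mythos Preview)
Your plan is correct and mirrors the paper's own template. The paper does not give a separate proof of this lemma; it is left to the reader by analogy with the $p=2$ computation in Proposition \ref{prop:d2contr}, whose structure is exactly what you outline: decompose $\tilde\phi_9$ via Lemma \ref{lemma:decompose3a}, evaluate the three Whittaker sums attached to $\phi_0,\phi_{\pm 1}$, and then do the case split on $o_3(m)$ versus $o_3(\tm)$. The one cosmetic difference is that in the $p=2$ proof the paper reads the local Whittaker values off the precomputed tables in \cite{LY} rather than from Propositions \ref{prop4.5}--\ref{prop4.7}; since $3$ is split here just as $2$ is, those same tables apply equally well at $p=3$, so either source yields the same answer.
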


\begin{lemma}
  \label{lem:s3contr1}
  Let $s_3 \mid 3$ and $\delta_3(m, \alpha; \dd_3)$ be as in the previous proposition. Then for all $\alpha \in \frac{1}{\sqrt{D}} \taf$ with $\tm := \frac{t}{a}\Nm(\alpha) = 1 - m$, we have
  \begin{equation}
    \label{eq:d3id}
    \sum_{\dd_3 \mid s_3} \delta_3(m, \alpha; \dd_3) =
s_3     \sum_{\substack{r_3 \mid s_3\\ \frac{m\tm}{r_3^2} \equiv 1 \bmod{\frac{s_3}{r_3}}}}
     \rho_3(\frac{m}{r_3^2}) + \mathds{1}_{3}( \frac{r_3}{3}) \mathds{1}_{3}( \frac{\alpha}{r_3}) .
  \end{equation}
\end{lemma}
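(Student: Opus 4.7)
The plan is to prove the identity by a short case analysis in $s_3 \in \{1, 3\}$. The case $s_3 = 1$ would be immediate: the only divisor is $r_3 = 1$, and since $\mathds{1}_3(1/3) = 0$, the right-hand side collapses to $\rho_3(m)$, which equals $\delta_3(m,\alpha;1)$ by \eqref{eq:d3=1}.

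For $s_3 = 3$, I would first expand the right-hand side as
\[
3\bigl(\rho_3(m)\cdot [m\tm\equiv 1\bmod 3] + \rho_3(m/9) + \mathds{1}_3(\alpha/3)\bigr)
\]
and the left-hand side as $\rho_3(m) + \delta_3(m,\alpha;3)$, using \eqref{eq:d3=1} for the first summand. My plan is then to stratify by the pair $(o_3(m), o_3(\tm))$: since $m+\tm=1$ and $\gcd(at,3)=1$ from Lemma \ref{lem:smallCM} gives $o_3(\Nm(\alpha))=o_3(\tm)$, the possibilities are exactly the four strata of Lemma \ref{lemma:delta3}---both $m,\tm$ units (forcing $m\equiv\tm\equiv 2\bmod 3$ and $m\tm\equiv 1\bmod 3$); $o_3(m)\ge 1$ with $\tm$ a unit; $m$ a unit with $o_3(\tm)\ge 1$; and $m=0$. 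In each stratum I would substitute $\delta_3(m,\alpha;3)$ from Lemma \ref{lemma:delta3} and compare directly, using the key observation that $\mathds{1}_3(\alpha/3)=0$ whenever $\tm\in\Z_3^\times$. This observation localizes the indicator contribution to the stratum where $o_3(\tm)\ge 1$, which is precisely where Lemma \ref{lemma:delta3} produces the matching term $3\mathds{1}_3(\alpha/3)-1$; elsewhere the comparison reduces to a check between the affine-linear function $2(o_3(m)-2)$ and the values of $\rho_3$ at $m$ and $m/9$.

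The split case $\lp\tfrac{D}{3}\rp=1$ will be covered directly by Lemma \ref{lemma:delta3}. The main obstacle will be the inert case $\lp\tfrac{D}{3}\rp=-1$, for which the explicit value of $\delta_3(m,\alpha;3)$ has not been recorded in the excerpt. I plan to handle it by developing an analogue of Lemma \ref{lemma:decompose3a} for the unramified quadratic extension $\kay\otimes\Q_3$: decompose $\tilde\phi_9$ into a small sum of tensors indexed by $\OO_D/3\OO_D\cong\Fb_9$, then evaluate the resulting Whittaker integrals using Propositions \ref{prop4.5} and \ref{prop4.6}. In the inert setting $\rho_3$ reduces to a parity check on $o_3(m)$, and after substituting the corresponding formula for $\delta_3(m,\alpha;3)$ the same stratified comparison should again match the right-hand side.
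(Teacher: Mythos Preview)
Your case analysis for the split situation $\lp\tfrac{D}{3}\rp=1$ is correct and is exactly how the paper would have you verify the identity (the paper omits the proof, but the method is the same direct comparison used in Proposition~\ref{prop:s2contr}).

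The gap is a scope error. Lemma~\ref{lem:s3contr1} is stated only in the split case: the phrase ``as in the previous proposition'' refers to Lemma~\ref{lemma:delta3}, which is computed under the standing hypothesis $\lp\tfrac{D}{3}\rp=1$ introduced just before Lemma~\ref{lemma:decompose3a}. The inert case $\lp\tfrac{D}{3}\rp=-1$ is treated separately afterwards and culminates in Proposition~\ref{prop:s3contr2}, whose right-hand side $s_3\,\rho_3\!\bigl(\tfrac{m(1-m)}{s_3^2}\bigr)$ is \emph{different} from \eqref{eq:d3id}. So your ``main obstacle'' does not exist for this lemma.

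More to the point, your plan for the inert case cannot succeed, because the identity \eqref{eq:d3id} is actually false there. Take $s_3=3$ and $o_3(m)=o_3(\tm)=0$ (so $m\equiv\tm\equiv 2\bmod 3$ and $m\tm\equiv 1\bmod 3$). In the inert case one has $\delta_3(m,\alpha;1)=\rho_3(m)=1$ and, from the lemma following Lemma~\ref{lemma:decompose3a} for the inert situation, $\delta_3(m,\alpha;3)=-1$; hence the left-hand side is $0$. But the right-hand side of \eqref{eq:d3id} picks up the $r_3=1$ term and gives $3$. The discrepancy is exactly the extra $r_3=1$ contribution that Proposition~\ref{prop:s3contr} later suppresses via the condition $s_3'\mid r_3$ (with $s_3'=s_3$ when $3$ is inert). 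So drop the inert discussion entirely; your split-case argument already proves the lemma as stated.
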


Now for the second case, we need to calculate both $\delta_3(m, \alpha;\dd_3)$ and $\delta_3'(m, \alpha;\dd_3)$.
The procedure is the same as before, and we can again use the calculations in \cite{LY}.

\begin{lemma}  In the notations above, we have
$$
\tilde{\phi}_{9} =
2 \sum_{\mu \in S_{1}} \phi_\mu \otimes \phi_{0}  +
2 \sum_{\mu \in S_{-1}} \phi_0 \otimes \phi_{\mu}  -
 \sum_{\mu_1 \in S_{-1}, \mu_2 \in S_{1}} \phi_{\mu_1} \otimes \phi_{\mu_2}
$$
where $S_j :=  \{ \mu \in \tfrac{1}{3}{\mathcal{O}_D}/{\mathcal{O}_D}: {3}{}\Nm(\mu) \equiv \tfrac{j}{3} \bmod{\Zb_3}\}$ for $j = \pm 1$.
\end{lemma}

The value of $\delta_3$ and $\delta'_3$ can be calculated similarly.

\begin{lemma}
  The quantities $  \delta_3(m, \alpha; 3)$
  defined in \eqref{eq:delta6} is 0 except in the following cases
  \begin{align*}
        \delta_3(m, \alpha; 3)
    &=
      \begin{cases}
        -1&\text{if } o_3(m(1-m)) = 0,\\
        2&\text{if } 2 \mid o_3(m(1-m)) \ge 1.
      \end{cases}
  \end{align*}
  If $2 \nmid o_3(m) \ge 1$ and $m \neq 0$, then $  \delta_3(m, \alpha; 3) = 0 $  and $    \delta_3'(m, \alpha; 3)
    = o_3(m) - \frac{ 1}{2}
$.
\end{lemma}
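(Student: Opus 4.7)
The plan is to substitute the explicit decomposition of $\tilde\phi_9$ from the preceding lemma into the defining integral \eqref{eq:deltas} for $\delta_3(s,m,\alpha;3)$ and evaluate each resulting local Whittaker integral via Propositions \ref{prop4.5}--\ref{prop4.7}. Since $\bigl(\tfrac{D_0}{3}\bigr)=-1$ and (by Lemma \ref{lem:smallCM}) $3\nmid a$, the local order $\OO_{D_0}\otimes\Z_3$ is the maximal order of the unramified quadratic extension $E/\Qb_3$, so the lattice $\Nc\otimes\Z_3$ may be identified with $(\OO_E,\kappa\Nm)$ for some $\kappa\in\Z_3^\times$ via Proposition \ref{prop:Lattice}. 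In particular $o(\Delta)=0$ in the notation of Section \ref{subsec:Whitt} and Proposition \ref{prop4.5}(3) governs the Whittaker values at $\mu=0$, while Propositions \ref{prop4.6} and \ref{prop4.7} supply the values at non-trivial cosets $\mu\in\frac13\OO_E/\OO_E$.

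Next, I plug the decomposition into the sum over $\mu$ in \eqref{eq:deltas}. The factor $3^{-o_3(3t)}=3^{-1}$ (as $3\nmid t$ by assumption) cancels the $3$ coming from $|\OO_E/3\OO_E|$. The three terms contribute as follows: the summand $2\sum_{\mu_0\in S_1}\phi_{\mu_0}\otimes\phi_0$ is non-zero precisely when $\alpha/3$ represents a class in $S_1$ (i.e.\ $o_3(\tm)=1$) and picks out the $\mu\equiv0$ Whittaker value from Prop.\ \ref{prop4.5}(3); the summand $2\sum_{\mu_0\in S_{-1}}\phi_0\otimes\phi_{\mu_0}$ requires $3\mid\alpha$ and sums $W_{m/3,3}(s,\mu_0/3;\Nc_3)$ over $\mu_0\in S_{-1}$, which is handled by Prop.\ \ref{prop4.6}(1) (since the relevant $o(\mu,m)$ is negative for the parity reasons forced by $3$ being inert); finally, the term $-\sum_{\mu_1\in S_{-1},\mu_2\in S_1}\phi_{\mu_1}\otimes\phi_{\mu_2}$ is supported on cosets with $o_3(\tm)=0$ and $\Nm(\alpha)\equiv -1\bmod 3$, and again uses Prop.\ \ref{prop4.6}.

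The case distinction is organised by $o_3(m)$ and $o_3(\tm)$, noting that $m+\tm=1$ forbids both being divisible by $3$. When $o_3(m)=o_3(\tm)=0$, only the third term survives, producing the value $-1$ matching $o_3(m(1-m))=0$. When $o_3(m)\geq 1$ and is even, only the second term survives and Prop.\ \ref{prop4.5}(3) gives $\delta_3(m,\alpha;3)=2$; when $o_3(\tm)\geq 1$ is even, the first term gives the symmetric contribution $2$. For the remaining case $2\nmid o_3(m)\geq 1$, both the first and third terms vanish at $s=0$ and the second term contributes through the derivative of $L_3(s+1,\epsilon)^{-1}\sum_{n=0}^{o_3(m/3)}(-3^{-s})^n$ at $s=0$; writing out this derivative using $L_3(s,\epsilon)=(1+3^{-s})^{-1}$ and simplifying gives exactly $o_3(m)-\tfrac12$.

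The main obstacle is the careful handling of the derivative case: one must correctly track both the derivative of the normalisation factor $L_3(s+1,\epsilon)^{-\delta_{m\neq 0}}$ in \eqref{eq:delta6} and that of the finite geometric sum coming from Prop.\ \ref{prop4.5}(3), and verify that the two signs combine to produce the asymmetric $-\tfrac12$ rather than the naively expected $\tfrac{o_3(m)+1}{2}$ (which would appear if $3$ were split). A secondary book-keeping challenge is making sure that the cosets $S_{\pm1}$ in the second and third terms correspond to Whittaker values from Prop.\ \ref{prop4.6} rather than the ``boundary'' Prop.\ \ref{prop4.7}, since the latter would violate the parity constraints imposed by $E/\Qb_3$ being unramified.
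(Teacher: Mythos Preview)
Your strategy—substitute the tensor decomposition of $\tilde\phi_9$ into \eqref{eq:deltas} and evaluate term by term with the formulas of Section~\ref{subsec:Whitt}—is exactly right, and it is the approach implicit in the paper (which omits this proof but carries out the parallel $p=2$ computation in Proposition~\ref{prop:d2contr}). However, several details of your execution are misidentified and would give wrong answers if followed literally.

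First, for non-trivial cosets $\mu\in\tfrac13\OO_E/\OO_E$ you appeal to Proposition~\ref{prop4.6}, but since $o(\Delta)=0$ the hypothesis there ($\mu_1=0$ and $o(\Delta\mu_2)\ge 0$) forces $\mu\in\OO_E$; the correct reference for all cosets in $S_{\pm1}$ is Proposition~\ref{prop4.7}.

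Second, you have the roles of the first two summands confused. In $2\sum_{\mu_0\in S_1}\phi_{\mu_0}\otimes\phi_0$ the factor $\phi_0$ sits on the $\af$-side, so this term selects the Whittaker value at the \emph{trivial} coset $\mu=0$ and is active precisely when $\alpha/3\in S_1$, i.e.\ when $\tm\equiv 1\bmod 3$ (equivalently $o_3(m)\ge 1$). Your parenthetical ``$o_3(\tm)=1$'' can never occur: since $3$ is inert, $o_3(\tm)=o_3(\Nm(\alpha))$ is always even. Conversely, the summand $2\sum_{\mu_0\in S_{-1}}\phi_0\otimes\phi_{\mu_0}$ has $\phi_0$ on the $\alpha$-side, so it requires $3\mid\alpha$ (i.e.\ $o_3(\tm)\ge 2$, hence $m\equiv 1\bmod 3$) and contributes Whittaker values at cosets $\mu\in S_{-1}$, handled by Proposition~\ref{prop4.7}(2).

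Consequently, in the derivative case $2\nmid o_3(m)\ge 1$ it is the \emph{first} summand that carries the computation: one has $\tm\equiv 1\bmod 3$, and $\delta_3(s,m,\alpha;3)=\tfrac{2}{3}W_{m/3}(s,0;\Nc_3)$ with $W_{m/3}(s,0)$ given by Proposition~\ref{prop4.5}(1) or (3) (with $o(\kappa)=1$, $o(\Delta)=0$, and the Prop.'s ``$m$'' equal to $m/3$, so the geometric sum runs to $o_3(m)-2$, not $o_3(m/3)$). Multiplying by $L_3(s+1,\epsilon)=(1+3^{-s-1})^{-1}$ and differentiating at $s=0$ then yields $o_3(m)-\tfrac12$ as claimed. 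With these corrections your outline goes through.
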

\begin{proposition}
  \label{prop:s3contr2}
  Let $s_3 \mid 3$.
  Then we have
  \begin{equation}
    \label{eq:d3id1}
    \begin{split}
          \sum_{\dd_3 \mid s_3} \delta_3(m, \alpha; \dd_3) &=
    s_3   \rho_3(\frac{m(1-m)}{s_3^2}), \\
          \sum_{\dd_3 \mid s_3} \delta_3'(m, \alpha; \dd_3) &=
    s_3 \frac{o_3(m/s_3) + 1}{2},~ \text{ if } 2\nmid o_3(m) \ge 1 \text{ and } m \neq 0.
    \end{split}
  \end{equation}
\end{proposition}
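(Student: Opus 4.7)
The plan is a direct case-by-case verification, relying on the tabulated values from Proposition \ref{prop:d3=1} (i.e.\ equation \eqref{eq:d3=1}) together with the lemma immediately preceding the statement, which supplies the values of $\delta_3(m,\alpha;3)$ and $\delta_3'(m,\alpha;3)$ in the inert case $\lp \tfrac{D}{3} \rp = -1$. Since $s_3 \mid 3$, only the two possibilities $s_3 = 1$ and $s_3 = 3$ need to be treated, and the key simplification is that $m + \tm = 1$ forces at most one of $m, \tm$ to have positive $3$-adic valuation, so that $o_3(m\tm) = \max(o_3(m), o_3(\tm))$ and
\[
\rho_3(m) + \rho_3(\tm) - 1 = \rho_3(m\tm)
\]
whenever the expressions are $3$-integral.

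For $s_3 = 1$, the identity for $\delta_3$ reduces to $\delta_3(m,\alpha;1) = \rho_3(m\tm)$, which follows from the first line of \eqref{eq:d3=1} together with the observation above: if $2 \mid o_3(m) \ge 0$ then $\rho_3(m) = 1 = \rho_3(m\tm)$, while if $2 \nmid o_3(m)$ then $o_3(m\tm)$ is also odd and both sides vanish. The derivative identity matches $\rho'_{3}(m)$ as in \eqref{eq:rho'} directly from the second line of \eqref{eq:d3=1}.

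For $s_3 = 3$, I would split according to $o_3(m\tm)$ and add the contributions from $\dd_3 = 1, 3$. If $o_3(m\tm) = 0$, then $\delta_3(m,\alpha;1) = 1$ and $\delta_3(m,\alpha;3) = -1$, summing to $0 = 3\rho_3(m\tm/9)$. If $2 \mid o_3(m\tm) \ge 2$, then the two contributions are $1$ and $2$, summing to $3 = 3\rho_3(m\tm/9)$. If $2 \nmid o_3(m\tm) \ge 1$, both contributions vanish, as does $\rho_3(m\tm/9)$. For the derivative, assuming $2\nmid o_3(m) \ge 1$ (so in particular $o_3(\tm) = 0$), we add
\[
\tfrac{o_3(m)+1}{2} + \Bigl(o_3(m) - \tfrac{1}{2}\Bigr) = \tfrac{3 o_3(m)}{2} = 3 \cdot \tfrac{o_3(m/s_3)+1}{2},
\]
which is exactly $s_3 \rho'_{3s_3}(m)$ defined in \eqref{eq:s3contr}.

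The main obstacle is not any individual computation but rather ensuring that the case analysis is exhaustive and that the normalizations match; in particular one must verify that the lemma above covers all vanishing cases (i.e.\ that $\delta_3(m,\alpha;3) = 0$ and $\delta_3'(m,\alpha;3) = 0$ outside the listed situations), so that summing over $\dd_3$ does not miss a hidden contribution. Once this is confirmed, the tabulated values assemble into \eqref{eq:d3id1} by the simple arithmetic above.
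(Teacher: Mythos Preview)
Your proof is correct and follows exactly the approach the paper intends: the paper states Proposition \ref{prop:s3contr2} without proof, leaving it as the direct case-by-case verification from \eqref{eq:d3=1} and the preceding lemma that you carry out. One point you should make explicit: in the step ``if $2\mid o_3(m)\ge 0$ then $\rho_3(m)=1=\rho_3(m\tm)$'' (and likewise in the odd-$o_3(m\tm)$ subcase for $s_3=3$) you are implicitly using that $o_3(\tm)$ is even, which holds because $3$ is inert in $\kay$ and $\tm=\tfrac{t}{a}\Nm(\alpha)$ with $3\nmid ta$; without this the identity $\rho_3(m)=\rho_3(m\tm)$ can fail when $o_3(m)=0$.
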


\subsubsection{$p \mid D$.}
\label{subsec:pmidD}


In this case, we have $\dd_p = 1$. Recall $A$ is the finite quadratic module defined in \eqref{eq:Aid}. Using the identification there, we have the following result.  
\begin{lemma}
\label{lemma:phi-split-D}
  In the notations above, for any prime $p \mid D$, we have $\dd_p = 1$ and 
  \begin{equation}
    \label{eq:phi1p1}
    \tilde{\phi}_{p\dd_{p}} = \tilde\phi_p =
    \begin{cases}
      \sum_{\mu \in \frac{1}{\sqrt D}\OO_{D_0, p}/\OO_{D_0, p}} \phi_{-\mu} \otimes \phi_{\mu},& \text{ if } p \nmid t_1,\\
      \sum_{\mu \in \frac{1}{\sqrt D}\OO_{D_0, p}/\OO_{D_0, p}} \phi_{\overline{\mu}} \otimes \phi_{\mu},& \text{ if } p \nmid t_2.
    \end{cases}
  \end{equation}
\end{lemma}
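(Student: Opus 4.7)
The plan is to directly identify the support of $\tilde\phi_p$ in coordinates and match it with the claimed decomposition. The hypotheses $D \equiv 1 \bmod 8$ and $3 \nmid D$ force every $p \mid D$ to be odd and distinct from $3$, so $\dd_p = 1$ for all $\dd \mid 24$; consequently $\phi_{1,p}$ is the characteristic function of $L \otimes \Zb_p = M_2(\Zb_p)$ and $\tilde\phi_p$ is the indicator of the $\Zb_p$-lattice $i_{1,p}^{-1}(M_2(\Zb_p)) \subset \kk_{D_0,p}^2$. Since the summands $\phi_{-\mu}\otimes\phi_\mu$ (resp.\ $\phi_{\bar\mu}\otimes\phi_\mu$) have disjoint supports as $\mu$ ranges over $\tfrac{1}{\sqrt D}\OO_{D_0,p}/\OO_{D_0,p}$, the right-hand side is itself an indicator, and the task reduces to matching supports.

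First, I would invoke Proposition \ref{prop:Lattice} at $\dd = 1$ to translate $i_1(\tilde\lambda,\lambda) \in M_2(\Zb_p)$ into the pair of conditions $\tilde\lambda + \lambda \in \tfrac{1}{t_1}\bfrak_1 \otimes \Zb_p$ and $\tilde\lambda\bar z_2 + \lambda z_2 \in \tfrac{2}{t_1}\overline{\af_1}\otimes\Zb_p$, both of which simplify to the common fractional ideal $I_p := \tfrac{1}{t_1}\Zb_p + \sqrt{D_0}\Zb_p$ at odd $p$ with $p \nmid a_1$. Writing $\tilde\lambda = x_1 + y_1\sqrt{D_0}$, $\lambda = x_2 + y_2\sqrt{D_0}$ and using that $a_1, a_2, b$ are all coprime to $D$, one expands the two $I_p$-membership conditions into the four independent coordinate relations
\begin{equation*}
x_1+x_2 \in \tfrac{1}{t_1}\Zb_p,\quad y_1+y_2 \in \Zb_p,\quad x_2-x_1 \in \tfrac{1}{t_2}\Zb_p,\quad y_2-y_1 \in \tfrac{1}{D_0 t}\Zb_p.
\end{equation*}
Since $\gcd(t_1,t_2) = 1$, combining the first with the third (and the second with the fourth) yields $x_i \in \tfrac{1}{t}\Zb_p$ and $y_i \in \tfrac{1}{D_0 t}\Zb_p$, equivalently $\tilde\lambda, \lambda \in \tfrac{1}{\sqrt D}\OO_{D_0,p}$.

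When $p \nmid t_1$, the ideal $I_p$ collapses to $\OO_{D_0,p}$ and the two "difference" relations become automatic consequences of the lattice membership, leaving $\tilde\lambda + \lambda \in \OO_{D_0,p}$ as the sole extra constraint; this is exactly the support of $\sum_\mu \phi_{-\mu}\otimes\phi_\mu$. When $p \nmid t_2$ (possibly with $p \mid t_1$), the ideal $I_p$ may properly contain $\OO_{D_0,p}$; here a direct check shows $\tilde\lambda - \bar\lambda = (x_1 - x_2) + (y_1+y_2)\sqrt{D_0}$ lies in $\Zb_p + \sqrt{D_0}\Zb_p = \OO_{D_0,p}$ precisely because $x_2 - x_1 \in \tfrac{1}{t_2}\Zb_p = \Zb_p$ and $y_1+y_2 \in \Zb_p$, and conversely this condition together with $\lambda \in \tfrac{1}{\sqrt D}\OO_{D_0,p}$ recovers all four coordinate relations. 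This matches the support of $\sum_\mu \phi_{\bar\mu}\otimes\phi_\mu$.

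The main technical subtlety will be verifying the "mixed" coordinate relation $b(x_1+x_2) + D_0(y_2-y_1) \in \tfrac{1}{t}\Zb_p$ arising from the real part of $\tilde\lambda\bar z_2 + \lambda z_2 \in I_p$: isolating the $y_2 - y_1$ term uses $b \in \Zb_p^\times$, after which the factor $D_0$ must be handled carefully at primes $p \mid D_0$, where $\tfrac{1}{D_0}\Zb_p$ and $\Zb_p$ differ by a uniformizer. As a consistency check, when $p \mid D_0$ and $p \nmid t$ both formulas apply and agree because $\bar\mu \equiv -\mu \bmod \OO_{D_0,p}$ for all $\mu \in \tfrac{1}{\sqrt{D_0}}\OO_{D_0,p}$, since in that range $\mu + \bar\mu = 2\,\mathrm{Re}(\mu) \in \Zb_p$.
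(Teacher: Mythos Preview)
Your proposal is correct and follows essentially the same route as the paper's own proof. Both arguments amount to writing $(\tilde\lambda,\lambda)$ in $\Zb_p$-coordinates and checking which pairs land in $M_2(\Zb_p)$; you reach the coordinate relations via Proposition~\ref{prop:Lattice}, while the paper recomputes the matrix $i^+(\tilde\mu)+i^-(\mu)$ directly and reads off the conditions $\alpha+\tilde\alpha\in tD_0\Zb_p$, $\beta+\tilde\beta\in t_2\Zb_p$, $\beta-\tilde\beta\in t_1\Zb_p$ from three of its entries --- these are exactly your relations after the change of variables $x_i=\beta/t,\ y_i=\alpha/(tD_0)$. One small remark: the ``subtlety'' you flag about needing $b\in\Zb_p^\times$ is not actually needed, since $b(x_1+x_2)\in\tfrac{1}{t_1}\Zb_p\subset\tfrac{1}{t}\Zb_p$ holds regardless of the valuation of $b$.
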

\begin{remark}
    If $p \nmid t = t_1t_2$, then $p \mid D_0$ and $- \mu = \overline{\mu}$. 
\end{remark}
\begin{proof}  Write 
$$
\mu= \frac{\alpha + \beta \sqrt{D_0}}{ \sqrt{D}}, \hbox{ and  }\tilde\mu = \frac{\tilde\alpha + \tilde\beta \sqrt{D_0}}{\sqrt{D}},
$$
with the numerators are in $\mathcal O_{D_0, p}$. Then 
$$
i^-(\mu) =-\frac{1}{tD_0}
 \left[ (\alpha+ \beta \sqrt{D_0}) \omega_1(z_1, z_2)  + \overline{(\alpha+ \beta \sqrt{D_0}) \omega_1(z_1, z_2)} \right]
 $$
 and similarly for $i^+(\tilde\mu)$. 
 Directly calculation shows that $
\iota^+(\tilde\mu) + \iota^-(\mu) \in  L_p
$ implies  (looking at three of the four entries of the matrices
$$
\alpha +\tilde{\alpha}  \in t D_0 \mathbb Z_p
$$
and 
$$
\beta +\tilde\beta \in t_2 \mathbb Z_p, 
\hbox{ and } 
\beta -\tilde\beta \in   t_1 \mathbb Z_p.
$$
When $p\nmid t_2$, the condition $\beta +\tilde\beta \in t_2 \mathbb Z_p$ is automatic, and we have $ \tilde\mu - \bar\mu \in  \mathcal O_{D_0, p}$.
When  $p \nmid t_1$, the condition $\beta -\tilde\beta \in t_1 \mathbb Z_p$ is automatic, and we have $ \tilde\mu + \mu \in  \mathcal O_{D_0, p}$.
Since $(t_1, t_2) =1$, this covers all cases.  Finally, it is easy to check directly that the converse is also true. 
\end{proof}
Now the quantity $\delta_p(n, \tilde{\alpha})$ defined in \eqref{eq:deltaD} is given explicitly as follows.
\begin{proposition}
  \label{prop:dpcontr}
  Let $n \in \Qb_{\ge 0}$ and $\tilde{\alpha} = \tilde{\alpha}_1 + \sqrt{D_0} \tilde{\alpha}_2  \in
  \taf_0$ be any element satisfying $\frac{1}{a}\Nm(\tilde{\alpha}) = -D_0t - n$ for an $\mathcal{O}_{D_0}$-fractional ideal
  $\taf_0$  co-prime to $D$ with norm $a$.
  Recall that $\epsilon = \epsilon_{\kay/\Q}$ is the Dirichlet character associated to $\kay = \Qb(\sqrt{D_0})/\Qb$. 
  Denote $r:= o_p(D_0 t), r_0 := o_p(D_0),\alpha := -\frac{\tilde\alpha_1^2 + an}{aD_0t} = 1- \frac{\tilde\alpha_2^2}{at}$ and $\Oc := \Oc_{D_0} \otimes \Zb_p = \Zb_p[\sqrt{D_0}]$. 

  \begin{description}
  \item[Case (i)] Suppose $n \neq 0$.
When and $p \not\in \Diff(-n/(D_0t), \Nc_1)$, the quantity $  \delta_p(n, \tilde{\alpha})$ defined in \eqref{eq:deltaD} is given by
\begin{align*}
  \begin{cases}
p^{(o_p(n)-r_0)/2}(    1+ \chi_p(\tfrac{an}{D_0}))
    &\text{if }   \tilde{\alpha} \in \sqrt{D}\Oc, 2r -r_0 \le o_p(n) < 2r,     \text{ and } 2 \mid o_p(n) - r_0,\\
\frac{p^{r-r_0/2}}{L(1, \epsilon)}(o_p(n) - 2r + 1)
  &\text{if }  \tilde{\alpha} \in \sqrt{D}\Oc,
2r \le o_p(n) \text{ and }\epsilon(p) = 1,\\
    \frac{p^{r-\lceil r_0/2 \rceil}}{L(1, \epsilon)} (2+\epsilon(p))
  &\text{if } \tilde{\alpha} \in \sqrt{D}\Oc, 2r \le o_p(n)  \text{ and }\epsilon(p) \neq 1,\\
     p^{o_p(\alpha)}(1 + \chi_p(-at \alpha)) &\text{if } 
0 \le o_p(\alpha)< o_p(\tilde\alpha_1) < r = r_0 \text{ and } 2 \mid o_p(\alpha),\\
     p^{\lfloor o_p(\tilde{\alpha}_1)/2\rfloor} &\text{if } 
                                                  o_p(\tilde\alpha_1) \le \min(o_p(\alpha),r-1) \text{ and }r = r_0,\\
    1
    & \text{if } \min(o_p(\alpha_1), o_p(\alpha_2)) = 0 \text{ and } r_0 < r.
\end{cases}  
\end{align*}
  Otherwise, it is  zero. 
  In particular, when $p \mid t$, we have
  \begin{equation}
    \label{eq:dp-1}
    \delta_p(n, \tilde\alpha)
    =
      \begin{cases}
        1 & \text{if }\tilde\alpha\not\in p\Oc,\\
        0 & \text{otherwise.}
      \end{cases}
  \end{equation}
When $r=r_0=1$, we have
  \begin{equation}
    \label{eq:dp-2}
    \delta_p(n, \tilde\alpha)
    =
    \begin{cases}
        2 & \text{if }o_p(n) \ge 1,\\      
        1 & \text{if }o_p(n) = 0,\\
        0 & \text{otherwise.}
      \end{cases}
  \end{equation}
When $p \in \Diff(-n/(D_0t), \Nc_1)$, the quantity $  \delta_p'(n, \tilde{\alpha})$ defined in \eqref{eq:deltaD} is given by
  \begin{align*}
     \begin{cases}
\frac{p^{o_p(n)-r+1}-1}{p-1}
    &\text{if }   \tilde{\alpha} \in \sqrt{D}\Oc \text{ and } r \le o_p(n) < 2r-r_0\\
           \frac{p^{\lceil (o_p(n)-r_0+1)/2\rceil} + p^{\lfloor (o_p(n)-r_0+1)/2\rfloor}
           -p^{r-r_0} -1}{p-1}
    &\text{if }   \tilde{\alpha} \in \sqrt{D}\Oc \text{ and } 2r -r_0 \le o_p(n) < 2r,\\
\frac{2p^{r-\lceil r_0/2\rceil}-p^{r-r_0}-1}{p-1}&\\
+           \frac{(2+\epsilon(p))p^{r-\lceil r_0/2 \rceil}}{2L(1, \epsilon)}(o_p(n) - 2\lfloor \tfrac{r_0}2 \rfloor + r_0 - 2r + 1)
  &\text{if }  \tilde{\alpha} \in \sqrt{D}\Oc
\text{ and }2r \le o_p(n)\\
1
    &\text{if } \tilde\alpha \in t\Oc \backslash \sqrt{D}\Oc \text{ and } r_0 < r,\\
2\frac{p^{\lceil (o_p(\alpha))/2\rceil}-1    }{p-1}
+ \frac{1 + (-1)^{o_p(\alpha)}}2 p^{(o_p(\alpha))/2}
    &\text{if } o_p(\alpha) < o_p(\tilde\alpha_1)<r = r_0,\\
           1
    & \text{if } 0 < \min(o_p(\alpha_1), o_p(\alpha_2)) < r-r_0 .
         \end{cases}
  \end{align*}
  Otherwise, it is  zero. 
  In particular, for any $p \mid t$, we have
  \begin{equation}
    \label{eq:dpp-1}
    \delta_p'(n, \tilde\alpha)
    =
      \begin{cases}
        1 & \text{if } \tilde\alpha \in p\Oc,\\
        0 & \text{otherwise.}
      \end{cases}
    \end{equation}
When $r=r_0=1$, we have
    \begin{equation}
    \label{eq:dpp-2}
    \delta'_p(n, \tilde\alpha)
    =
    \begin{cases}
        o_p(n) & \text{if }o_p(n) \ge 1,\\      
        0 & \text{otherwise.}
      \end{cases}
    \end{equation}
  \item[Case (ii)] Suppose $n = 0$ and $r_0 \le 1$. 
      If $r = r_0 = 1$, then we have
  \begin{equation}
    \label{eq:dp0a}
    \delta_p(0, \tilde\alpha) =
    \begin{cases}
      1 & \text{if }\tilde\alpha \in \sqrt{D}\Oc,\\
      0 & \text{otherwise.}
    \end{cases}
  \end{equation}
  If $r > r_0$, i.e.\ $p \mid t$, then we have
  \begin{equation}
    \label{eq:dp0b}
    \delta_p(0, \tilde\alpha) = 0,~
    \delta'_p(0, \tilde\alpha) = \frac1{L_p(0, \epsilon)} = 1 - \epsilon(p).
  \end{equation}

  \end{description}    
  \end{proposition}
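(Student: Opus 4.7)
\medskip

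The plan is to reduce the sum over $\mu$ in \eqref{eq:deltas} to a single Whittaker function value using Lemma \ref{lemma:phi-split-D}, then read off the answer from Propositions \ref{prop4.5}--\ref{prop4.7}. Concretely, when $p \nmid t_1$, the splitting $\tilde\phi_p = \sum_{\mu \in \frac{1}{\sqrt D}\OO_{D_0,p}/\OO_{D_0,p}} \phi_{-\mu}\otimes\phi_\mu$ forces $\phi_{1,p}(\tilde\alpha/\sqrt D, \mu)$ to vanish unless $\mu \equiv -\tilde\alpha/\sqrt D$ in $A_p$. Consequently, only one $\mu = \mu_{\tilde\alpha}$ survives in \eqref{eq:deltas}, and the definition \eqref{eq:deltaD} becomes
\begin{equation*}
p^{o_p(D)/2} \frac{\delta_p(s, m, \tilde\alpha/\sqrt{D}; 1)}{L_p(s, \epsilon)^{\delta_{m=0}}} = \frac{p^{o_p(D)/2 - o_p(t)}}{L_p(s, \epsilon)^{\delta_{m=0}}} W_{m, p}\bigl(s, \mu_{\tilde\alpha}; \Nc_1\bigr),
\end{equation*}
with $m = -n/(D_0 t)$. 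The case $p \nmid t_2$ is identical after replacing $\mu_{\tilde\alpha}$ by the conjugate.

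Next I would identify the parameters of Section \ref{subsec:Whitt} as $F = \Qb_p$, $\Delta = D_0$, $\kappa = -tu$ for an appropriate $u \in \Zb_p^\times$, so that $\Nc_1 \otimes \Zb_p \cong (\OO_{\Delta}, \kappa\,\Nm)$. Under the identification \eqref{eq:Aid}, the condition $\mu_{\tilde\alpha} = 0$ is equivalent to $\tilde\alpha \in \sqrt D \Oc$, in which case I apply Proposition \ref{prop4.5} with the various subcases indexed by the location of $o_p(m) - o_p(\kappa) = o_p(n) - 2r$ relative to the interval $[0, r_0) \cup [r_0, \infty)$. When $\tilde\alpha \notin \sqrt D \Oc$, I determine whether $\mu_{\tilde\alpha}$ has trivial first coordinate (apply Proposition \ref{prop4.6}) or not (apply Proposition \ref{prop4.7}), using that $o_p(\mu_{\tilde\alpha}, m) = o_p(\alpha)$ and $o_p(\mu_{\tilde\alpha})$ can be read from $o_p(\tilde\alpha_1)$. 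The constant term of the Whittaker expansion in $X = q^{-s}$ gives $\delta_p(n, \tilde\alpha)$, and the coefficient of $s$ (after dividing by $-\log p$) gives $\delta_p'(n, \tilde\alpha)$. The special formulas \eqref{eq:dp-1}, \eqref{eq:dp-2}, \eqref{eq:dpp-1}, \eqref{eq:dpp-2} then fall out by specializing to $p \mid t$ (where $r > r_0$) or $r = r_0 = 1$. Finally, Case (ii) with $n = 0$ follows directly from Proposition \ref{prop4.5}(5), where for $r > r_0$ the leading constant in $X^{o(\kappa) + 2\lfloor o(\Delta)/2\rfloor}$ is multiplied by $L(s,\chi_\Delta)/L(s+1, \chi_\Delta)$, producing $1/L_p(0, \epsilon)$ upon taking the derivative at $s = 0$.

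The main obstacle is the sheer proliferation of subcases: one must carefully match the trichotomy in each of Propositions \ref{prop4.5}--\ref{prop4.7} against the trichotomy in $(r, r_0)$ and the relative positions of $o_p(\tilde\alpha_1), o_p(\tilde\alpha_2), o_p(n)$, while tracking the normalization factors $p^{o_p(D)/2 - o_p(t)}$ and $|\Delta|^{1/2} = p^{-r_0/2}$. A second subtlety arises when $p \mid t$ but $p \nmid D_0$: here $\OO_{\Delta} = \OO_{D_0,p}$ is still the maximal order (since $p$ is inert or split in $D_0$) yet the lattice $\Nc_1$ has a different normalization because of the $t$-factor, forcing the asymmetric appearance of $\tilde\alpha \in t\Oc \setminus \sqrt D \Oc$ in the statement. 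Once the case-bookkeeping is done and the substitution $X = q^{-s} \mapsto 1 - s\log p + O(s^2)$ is performed, the individual entries of Proposition \ref{prop:dpcontr} emerge as the constant and linear coefficients.
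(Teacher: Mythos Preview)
Your approach is exactly the paper's: collapse the $\mu$-sum via Lemma~\ref{lemma:phi-split-D} to a single Whittaker value $W_{-n/(D_0 t),p}(s,\mu_{\tilde\alpha})$, identify the parameters of Section~\ref{subsec:Whitt} as $F=\Qb_p$, $\Delta=D_0$, $\kappa=-t/a$, and then read off each case from Propositions~\ref{prop4.5}--\ref{prop4.7}. The case-matching you describe (Prop.~\ref{prop4.5} when $\tilde\alpha\in\sqrt{D}\Oc$, Prop.~\ref{prop4.6} when only $\mu_1=0$, Prop.~\ref{prop4.7} otherwise) is correct, and the paper does precisely this bookkeeping.

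There is one genuine slip in your treatment of Case~(ii) with $r>r_0$. You claim this follows from Proposition~\ref{prop4.5}(5), but that proposition applies only when $\mu=0$, i.e.\ when $\tilde\alpha\in\sqrt{D}\Oc$. When $n=0$ and $p\mid t$, one has $o_p(\Nm(\tilde\alpha))=o_p(aD_0t)=r<2r-r_0$, which is incompatible with $\tilde\alpha\in\sqrt{D}\Oc$ (the latter forces $o_p(\Nm(\tilde\alpha))\ge 2r-r_0$). So $\mu_{\tilde\alpha}\neq 0$, and Proposition~\ref{prop4.5} is unavailable. The paper instead observes that $o(\mu,0)=o((\tilde\alpha_1^2-D_0\tilde\alpha_2^2)/D)=-o(t)<0$ and applies case~(1) of Propositions~\ref{prop4.6} and~\ref{prop4.7}; those give $W_0(s,\mu_{\tilde\alpha})=|\Delta|^{1/2}(1-X)\sum_{0\le n\le 0}(qX)^n=|\Delta|^{1/2}(1-X)$ (since $o(\mu,0)+o(\kappa)=0$), and after dividing by $L_p(s,\epsilon)$ and taking the linear coefficient in $s$ one obtains $\delta_p'(0,\tilde\alpha)=1-\epsilon(p)$. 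Your proposed mechanism---extracting $L(s,\chi_\Delta)/L(s+1,\chi_\Delta)$ from Prop.~\ref{prop4.5}(5)---does not apply here and would have to be replaced by this argument.
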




\begin{thebibliography}{99}

\bibitem{BY09}{J. H. Bruinier and T. Yang, \emph{Faltings heights of CM cycles and derivatives of  $L$-functions},
Invent. Math. 177 (2009), no. 3, 631--681.}

\bibitem{BKY}{J. H. Bruinier, S. S. Kudla and T. Yang, \emph{Special values of Green functions at big CM points},
Int. Math. Res. Not. IMRN(2012), no. 9, 1917--1967.}

\bibitem{cox}{D. Cox, \emph{ Primes of the form  $x^2+ny^2$—Fermat, class field theory, and complex multiplication},
AMS Chelsea Publishing, Providence, RI, 2022, xv+533 pp.}

\bibitem{Gee}{A. Gee, \emph{Class invariants by Shimura's reciprocity law},
J. Théor. Nombres Bordeaux 11 (1999), no. 1, 45--72.}

\bibitem{GZ}{B. Gross and D. Zagier, \emph{On singular moduli}, J. Reine Angew. Math. {\bf355} (1985), 191–220.}

\bibitem{HY12}{B. Howard and T. Yang, \emph{ Singular moduli refined},
Adv. Lect. Math. (ALM), 19
International Press, Somerville, MA, 2011, 367--406.}

\bibitem{Koehler}{G. K$\ddot{\mbox{o}}$hler, \emph{Eta Products and Theta Series Identities}, Springer Berlin, Heidelberg, 2011.}

\bibitem{KRYtiny}{S. S. Kudla, M. Rapoport and T. Yang, \emph{On the derivative of an Eisenstein series of weight one},
Internat. Math. Res. Notices(1999), no. 7, 347--385.}

\bibitem{KY10}{S. S. Kudla and T. Yang, \emph{ Eisenstein series for ${\rm SL}(2)$},
Sci. China Math. 53 (2010), no. 9, 2275--2316.}

\bibitem{LY}{Y. Li and T. Yang, \emph{On a conjecture of Yui and Zagier}, Algebra and Number Theory, 14 (2020), 2197--2238.}


\bibitem{Schofer}{J. Schofer,  \emph{Borcherds forms and generalizations of singular moduli},
J. Reine Angew. Math. 629 (2009), 1--36.}


\bibitem{Sh}{G. Shimura,\emph{ Arithmetic of quadratic forms},
Springer Monogr. Math.
Springer, New York, 2010, xii+237 pp.}


\bibitem{YY19}{T. Yang and H. Yin, \emph{ Difference of modular functions and their CM value factorization},
Trans. Amer. Math. Soc. 371 (2019), no. 5, 3451--3482.}

\bibitem{YYY}{T. Yang, H. Yin and P. Yu, \emph{ The lambda invariants at CM points},
Int. Math. Res. Not. IMRN(2021), no. 7, 5542--5603.}


\bibitem{YZ}{N. Yui and D. Zagier,  \emph{On the singular values of Weber modular functions},
Math. Comp. 66 (1997), no. 220, 1645--1662. }
    
\end{thebibliography}

\begin{remark}
    \label{rmk:chip}
    If $2 \nmid o_p(D_0)$ and $2 \mid o_p(n)-r_0$,
    then 
    $$
    \chi_p(an/D_0) 
    =
    \begin{cases}
        1 & p \not\in \Diff(-n/(D_0t), \Nc_1),\\
        -1 & p \in \Diff(-n/(D_0t), \Nc_1).  
    \end{cases}
    $$
\end{remark}
\begin{proof}
  Applying Lemma \ref{lemma:phi-split-D}, in addition to the definitions of $\delta_p(n, \tilde\alpha)$ and $\delta'_p(n, \tilde\alpha)$ in \eqref{eq:deltas} and \eqref{eq:deltaD}, we have for $n > 0$
\begin{align*}
    \delta_p(n, \tilde\alpha)
&= p^{r-r_0/2} \delta_p(0, -n/(D_0 t), \tilde\alpha/\sqrt{D}; 1)
= p^{r_0/2} 
\begin{cases}
    W_{-n/(D_0 t)}(0, -\tilde\alpha/\sqrt{D}) & p \nmid t_1,\\
    W_{-n/(D_0 t)}(0, -\overline{\tilde\alpha}/\sqrt{D}) & p \nmid t_2,
\end{cases}\\
\delta'_p(n, \tilde\alpha)
&= \frac{p^{r-r_0/2}}{\log p} \delta'_p(0, -n/(D_0 t), \tilde\alpha/\sqrt{D}; 1)
= \frac{p^{r_0/2} }{\log p}
\begin{cases}
    W'_{-n/(D_0 t)}(0, -\tilde\alpha/\sqrt{D}) & p \nmid t_1,\\
    W'_{-n/(D_0 t)}(0, -\overline{\tilde\alpha}/\sqrt{D}) & p \nmid t_2.
\end{cases}
\end{align*}
    We now apply the results in section \ref{subsec:Whitt} with 
    $$
    F = \Qb_p, \Delta = D_0, \kappa = -t/a, m = -\frac{n}{D_0t},
    \mu \in \Oc -\frac{\tilde\alpha_1 \pm \tilde\alpha_2 \sqrt{D_0}}{\sqrt{D}}, -\pi = p, X = p^{-s}.
    $$ 
     We write $o$ for $o_p$ and $\Diff = \Diff(-n/(D_0t), \Nc_1) = \Diff(-an, (\tilde\af_0, \Nm))$ for convenience. 
    
     Proposition \ref{prop4.5} has 5 cases, and is applicable when $\alpha \in \sqrt{D}\Oc$.
     In case (1), we have $0 \le o(n/(D_0t)) < o(t)$, and $\delta_p=0$, whereas $\delta'_p=\frac{p^{o_p(n)-r+1}-1}{p-1}$ as in case I for $\delta_p'$. 
     In case (2), $o(t) \le o(n/(D_0t)) < r_0$.
     If $p \notin \Diff$, then $\delta_p$ is non-zero when $2\mid o(n/D_0)$, which gives case I for $\delta_p$.
     Otherwise, $\chi_p(m\kappa) = \chi_p(na/D_0)$ is always $-1$, and we obtain case II for $\delta_p$ and $\delta'_p$.
     Case (3) and (4) happends for $o(n/(D_0t^2)) \ge r_0$, and yield cases II and III for $\delta_p$, and case III for $\delta'_p$.

     Proposition \ref{prop4.6} has 3 cases, and is applicable when $\tilde\alpha \in t\Oc\backslash \sqrt{D}\Oc$.
     To apply it, we take $\mu = \mu_2\sqrt{D_0} = -\frac{\tilde\alpha_1}{\sqrt{D}}$. Then
     $$
     \alpha 
     =     \alpha(\mu, m) = \kappa \mu_2^2 D_0 + m = - \frac{\tilde\alpha_1^2/a + n}{D_0t} = 1 - \frac{\tilde\alpha_2^2}{at} \in \Zb_p. 
     $$
     In case (1), the condition $0 \le o(\alpha) < o(t)$ implies $p \mid t$ and $o(\alpha) = o(1-\tilde\alpha_2^2/at) = 0$, since $\tilde\alpha_2^2/t \in t\Zb_p$.
     This proves case IV for $\delta'_p$.
     In case (2), the condition $o(t) \le o(\alpha) < o(\tilde\alpha_1) = o(\Delta\mu_2) + o(\kappa)$ implies that
     $$
0 = o(1) = o\lp \alpha + \frac{\tilde\alpha_2^2}{at}\rp \ge \min(o(\alpha), o(\tilde\alpha_2^2/t)) \ge o(t),
$$
i.e.,\ $p \nmid t$.
When $p\not\in\Diff$, we obtain case IV for $\delta_p$. 
When $p\in\Diff$, there does not exist $\beta_1, \beta_2 \in\Zb_p$ such that $\frac{an + \beta_2^2}{D_0} = \beta_1^2$, hence $\chi_p(-at\alpha) = \chi_p((\tilde\alpha_1^2+an)/D_0) = -1$. Then specializing the  to $X =1$ gives us $\delta_p = 0$ and case V for $\delta'_p$.
In case (3), we have $o(\alpha/t) \ge o(\tilde\alpha_1/t) \ge 0$.
So $o((1-\tilde\alpha_2^2/(at))/t) \ge 0$, which implies $p \nmid t$ since $\tilde\alpha_2 \in t\Zb_p$. Furthermore,  $-an \equiv \tilde\alpha_1^2 \bmod p$ implies that $-an$ is a square in $\Zb_p$ by Hensel's lemma. In particular, it is always  a norm from $\Q_p(\sqrt{D_0})$, and $p$ is never in $\Diff$. Specializing $X = 1$ gives us case V for $\delta_p$. 

    Proposition \ref{prop4.7} has 2 cases and is applicable when $\tilde\alpha \not\in t\Oc$, which happens only if $p \mid t$.
    We set $\mu = \tilde\alpha/\sqrt{D}$, and $\alpha(\mu, m) = 1$. 
    Checking case by case, we always have $o(\mu) = \min(o(\tilde\alpha_1/t), o(\tilde\alpha_2/t))$.
    In case (1), $-o(t) < o(\mu)$ implies that $\tilde\alpha_i \in p\Zb_p$ for $i = 1, 2$. Setting $X=1$ gives us case VI of $\delta'_p$. 
    In case (2), we have similarly $\tilde\alpha \not\in p\Oc$, and obtain case VI of $\delta_p$.

    Suppose $p \mid t$, i.e.\ $r > r_0$. For $\tilde\alpha \in \sqrt{D}\Oc$, we have equivalently $o(\tilde\alpha_1) \ge r, o(\tilde\alpha_2) \ge r-r_0$. So $2r \ge 2r-r_0 > r$ and
    $$
o(n) = o(\tilde\alpha_1^2 - D_0 \tilde\alpha_2^2 + aD_0t) = o(D_0t) = r. 
    $$
    So only the last case for $\delta_p$ and cases I, IV, VI for $\delta'_p$ are applicable. Those give us equations \eqref{eq:dp-1} and  \eqref{eq:dpp-1}.

    When $r=r_0=1$, the condition $o_p(n) \ge 1$ directly implies that $\tilde\alpha \in \sqrt{D}\Oc$. Simplifying the expressions in $\delta_p$ and $\delta'_p$ gives us equations \eqref{eq:dp-2}, and     \eqref{eq:dpp-2}.
    This proves Case (i).

    For Case (ii), when $r = r_0 = 1$, we have $\tilde\alpha \in \sqrt{D} \Oc$ and can apply Proposition \ref{prop4.5} (5) to obtain \eqref{eq:dp0a}.
    When $r > r_0$, then $o(\Nm(\tilde\alpha)) = o(D_0t) = r < 2r-r_0$, which means $\tilde\alpha \not\in \sqrt{D} \Oc$ and Proposition \ref{prop4.5} is no longer applicable.
    So    we use Propositions \ref{prop4.6} and Propositions \ref{prop4.7}. 
    As    $o(\mu, 0) = o((\tilde\alpha_1^2 - D_0 \tilde\alpha_2^2)/D) = -o(t) < 0$, case (1) in those two Propositions are applicable, and they give us \eqref{eq:dp0b}. 
\end{proof}



\end{document}